\newcommand{\bA}{\mathbb{A}}
\newcommand{\bC}{\mathbb{C}}
\newcommand{\bG}{\mathbb{G}}
\newcommand{\bN}{\mathbb{N}}
\newcommand{\bP}{\mathbb{P}}
\newcommand{\bQ}{\mathbb{Q}}
\newcommand{\bR}{\mathbb{R}}
\newcommand{\bZ}{\mathbb{Z}}
\newcommand{\cA}{\mathcal{A}}
\newcommand{\cC}{\mathcal{C}}
\newcommand{\cD}{\mathcal{D}}
\newcommand{\cE}{\mathcal{E}}
\newcommand{\cG}{\mathcal{G}}
\newcommand{\cO}{\mathcal{O}}
\newcommand{\cS}{\mathcal{S}}
\newcommand{\cX}{\mathcal{X}}
\newcommand{\fm}{\mathfrak{m}}
\newcommand{\fp}{\mathfrak{p}}
\newcommand{\fs}{\mathfrak{s}}
\newcommand{\fz}{\mathfrak{z}}
\newcommand{\fC}{\mathfrak{C}}
\newcommand{\fG}{\mathfrak{G}}
\newcommand{\fV}{\mathfrak{V}}
\newcommand{\gG}{\mathbf{G}}
\newcommand{\gSL}{\mathbf{SL}}
\newcommand{\rM}{\mathrm{M}}
\DeclareMathOperator{\Aut}{Aut}
\DeclareMathOperator{\End}{End}
\DeclareMathOperator{\red}{red}
\DeclareMathOperator{\SL}{SL}
\DeclareMathOperator{\Spec}{Spec}
\DeclareMathOperator{\Spf}{Spf}
\DeclareMathOperator{\tr}{tr}
\DeclareMathOperator{\trdeg}{trdeg}
\newcommand{\rig}{\mathrm{rig}}
\newcommand{\formal}{\mathrm{for}}
\newcommand{\abs}[1]{\lvert #1 \rvert}
\newcommand{\ov}{\overline}
\newcommand{\fullmatrix}[4]{\left( \begin{matrix} #1 & #2 \\ #3 & #4 \end{matrix} \right)}
\newcommand{\defterm}[1]{\textbf{#1}}
\newtheorem{lemma}{Lemma}[section]
\newtheorem{proposition}[lemma]{Proposition}
\newtheorem{theorem}[lemma]{Theorem}
\newtheorem{conjecture}[lemma]{Conjecture}
\Crefname{conjecture}{Conjecture}{Conjectures} 
\Crefname{claim}{Claim}{Claims}
\newtheorem*{lemma*}{Lemma}
\newtheorem*{proposition*}{Proposition}
\newtheorem*{theorem*}{Theorem}
\newtheorem*{corollary*}{Corollary}
\newtheorem*{claim*}{Claim}
\theoremstyle{definition}
\newtheorem*{definition}{Definition}
\newtheorem{remark}[lemma]{Remark}
\newcounter{constant}
\newcommand{\newC}[1]{%
   \ifthenelse{\equal{#1}{*}} {%
      \stepcounter{constant} c_{\theconstant}%
   } {%
      \refstepcounter{constant} c_{\theconstant} \label{C:#1}%
   }%
}
\newcommand{\refC}[1]{c_{\ref*{C:#1}}}
\newcommand{\chris}{\textcolor{red}}
\newcommand{\van}{{v\textrm{-}\mathrm{an}}}
\newcommand{\wan}{{w\textrm{-}\mathrm{an}}}
\newcommand{\ian}{{\van\textrm{-}\mathrm{an}}}
\newcommand{\Qbar}{\overline{\bQ}}
\newcommand{\powerseries}[2]{#1 [\![ #2 ]\!]}
\title{Some new cases of Zilber--Pink in $Y(1)^3$}
\author{Christopher Daw, Martin Orr, and Georgios Papas}
\address{}
\email{}
\dedicatory{}
\subjclass[]{}
\begin{document}



\maketitle

\begin{abstract}
    We prove the Zilber--Pink conjecture for curves in $Y(1)^3$ that intersect a modular curve in the boundary. We also give an unconditional result for unlikely intersection points having few places of supersingular reduction where they are close to a fixed base point. Both results are proved using the G-functions method for unlikely intersections. 
\end{abstract}

\quad

\section{Introduction}

The aim of this paper is to prove new cases of the Zilber--Pink conjecture for~$Y(1)^3$. The statement of this conjecture is that {\it a geometrically irreducible algebraic curve~$C\subset Y(1)^3 \cong \bA^3$, not contained in a proper special subvariety, has at most finitely many intersection points with special curves in $Y(1)^3$} (see below for the definition of special subvarieties in~$Y(1)^n$). Habegger and Pila~\cite{HP12} have proved this conjecture for so-called \textit{asymmetric} curves. More recently, the first- and second-named authors~\cite{Y(1)} and the third-named author~\cite{Papas:Y(1)} have proved it under restrictions on how the closure of~$C$ in $(\bP^1)^3$ intersects the boundary $(\bP^1)^3 \setminus \bA^3$.

In this paper, we prove the conjecture when $C$ intersects a strongly special curve in the boundary (Theorem~\ref{teo:main}), in contrast to our previous results which require $C$ to intersect certain zero-dimensional subsets of the boundary.
We also prove a version of the conjecture involving no condition at the boundary, but which only gives finiteness for points satisfying a condition on their places of supersingular reduction (Theorem~\ref{teo:super}).

\quad

The Zilber--Pink conjecture is one of the central problems in arithmetic geometry today. It can be formulated in many different settings.
For (pure) Shimura varieties, it may be stated as follows.

\begin{conjecture}[Zilber--Pink]
Let $V$ be an irreducible subvariety of a Shimura variety $S$. Suppose that the intersection
of $V$ with the union of all special subvarieties of $S$ of codimension greater than $\dim(V)$ is Zariski dense in $V$. Then $V$ is contained in a proper special subvariety of $S$.
\end{conjecture}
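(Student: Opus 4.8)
The plan is to follow the \textbf{Pila--Zannier strategy}, which reduces the conjecture to the interaction of a counting statement in o-minimal geometry, a functional-transcendence statement, and an arithmetic lower bound. I should say at the outset that this strategy does \emph{not} currently settle the conjecture in the generality stated: the o-minimal and functional-transcendence inputs are now theorems, but the arithmetic input is available only in special cases, and supplying new such cases (for curves in $Y(1)^3$ meeting the boundary, and for points with few supersingular places) is precisely what this paper does via the G-function method.

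First I would fix a uniformization $\pi\colon \cX \to S$ of the Shimura variety by a Hermitian symmetric domain, with $\cX$ semialgebraic and $\pi$ restricted to a fundamental domain $\fX$ definable in an o-minimal structure ($\bR_{\mathrm{an},\exp}$, by Peterzil--Starchenko and Klingler--Ullmo--Yafaev). Special subvarieties of $S$ are images under $\pi$ of totally geodesic subdomains attached to algebraic subgroup data, parametrized by a definable family. Passing to a component $Y$ of $\pi^{-1}(V)$ and intersecting $\fX$, the hypothesis that $V$ meets the union of special subvarieties of codimension $>\dim V$ in a Zariski dense set transfers to Zariski density, in $Y$, of the set of points lying on one of these "atypical" totally geodesic pieces.

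Next I would invoke the \textbf{Ax--Schanuel theorem for Shimura varieties} (Mok--Pila--Tsimerman, building on Pila--Tsimerman for $\Ag$): it controls atypical components of intersections of $V$ with special subvarieties, showing each such component lies in a proper weakly special subvariety, and thereby reduces matters to the \emph{optimal} subvarieties of $V$ in the Habegger--Pila sense. Then I would apply the \textbf{Pila--Wilkie counting theorem} to the definable set parametrizing the group-theoretic data of special subvarieties meeting $V$: if $V$ is contained in no proper special subvariety, one shows that infinitely many data of bounded-degree fields occur, and — once the arithmetic input below forces a polynomial count of such rational/integral points — Pila--Wilkie produces a positive-dimensional semialgebraic block inside the definable set. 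A further application of Ax--Schanuel upgrades this block to an honest proper special subvariety of $S$ containing $V$, contradicting the hypothesis.

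The main obstacle — and the reason the conjecture remains open as stated — is the \textbf{complementary arithmetic lower bound}: one needs that the complexity of the datum defining a special subvariety $T$ with $\dim(T\cap V)$ atypical is bounded polynomially by the degree over a fixed number field of a point of $T\cap V$, together with a \emph{height upper bound} for such points, ideally via a suitable height inequality on $V$. The Galois-orbit lower bounds are known only in restricted regimes (via work on André--Oort, Masser--Wüstholz-type isogeny estimates, and, for the height side, the G-function method of André and its refinements). It is the absence of these bounds in full generality — not the functional-transcendence or counting steps — that blocks the outline; accordingly the present paper establishes exactly the new instances in which these arithmetic inputs can be produced.
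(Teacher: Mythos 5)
This ``statement'' is the Zilber--Pink conjecture itself, which the paper states as an open conjecture and does not prove; there is no proof in the paper to compare against. Your proposal correctly recognizes this, and your sketch of the Pila--Zannier strategy --- with Ax--Schanuel and Pila--Wilkie as the available functional-transcendence and counting inputs, and the Galois-orbit/height lower bounds as the genuinely missing arithmetic ingredient --- accurately matches how the paper frames its own contribution, namely supplying those arithmetic bounds (Theorems \ref{teo:main-ht} and \ref{teo:super-ht}, via the G-function method) in two new special cases for curves in $Y(1)^3$.
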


The simplest Shimura variety is $Y(1)\cong\bA^1$---the moduli space of elliptic curves. Clearly, Zilber--Pink says nothing for $Y(1)$ itself. For $Y(1)^2$ the conjecture asserts that an irreducible (plane) curve containing infinitely many special (CM) points is special. This was proved by Andr\'e \cite{andre:AO} in 1998, and is prototypical of the general Andr\'e--Oort conjecture, which states that {\it if an irreducible subvariety of a Shimura variety contains a Zariski dense set of special points, then it is a special subvariety}. The Andr\'e--Oort conjecture has recently been proved in full, through the works of many authors, culminating in a paper of Pila--Shankar--Tsimerman, with an appendix by Esnault--Groechenig \cite{PST+}.

Thus, the obvious next step is Zilber--Pink in $Y(1)^3$. With Andr\'e--Oort decided, this reduces to the statement that an irreducible curve $C$ in $\bA^3$ not contained in any proper special subvariety intersects only finitely many special curves.  

If $C$ is not defined over $\Qbar$, then Zilber--Pink holds for $C$ by work of Pila \cite[Thm. 1.4]{pila:fermat}. Over $\Qbar$, the work of Habegger--Pila \cite[Thm. 2]{HP12} establishes that $C$ will only intersect finitely many special curves with a fixed coordinate. We recall that the special subvarieties of $Y(1)^n$ are defined by taking irreducible components of those subvarieties defined by finite conjunctions of
\begin{itemize}
\item[(1)] $X_i=x$, for some $x$ in the set $\Sigma_{\rm CM}$ of singular moduli (i.e.\ $j$-invariants of CM elliptic curves);
\item[(2)] $\Phi_N(X_i,X_j)=0$, for some $1\leq i<j\leq n$ and $N\in\mathbb{N}$,
\end{itemize}
where $\Phi_N(X,Y)\in\bZ[X,Y]$ denotes the $N$th modular polynomial.
This is defined by the property that two elliptic curves with $j$-invariants $j_1$ and $j_2$ are isogenous, with minimal isogeny of degree $N$, if and only if $\Phi_N(j_1,j_2)=0$. Special subvarieties defined by conditions only of type (2)---namely, those with no fixed coordinate---are examples of so-called {\it strongly special} subvarieties \cite{CU:equidistribution}.

It follows that the problem of Zilber--Pink in $Y(1)^3$ is reduced to the following, rather elegant conjecture. 

\begin{conjecture}\label{conj:ZPY(1)}
    Let $C$ be an irreducible algebraic curve in $Y(1)^3\cong\bA^3$ defined over $\Qbar$ not contained in a proper special subvariety. Then $C$ contains only finitely many points $(x_1,x_2,x_3)$ for which there exist $M,N\in\bN$ such that
    \[\Phi_M(x_1,x_2)=\Phi_N(x_2,x_3)=0.\]
\end{conjecture}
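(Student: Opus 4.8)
The approach I would take is the \emph{G-function method for unlikely intersections} of Andr\'e and Bombieri, in the form developed for modular curves in recent work of Daw and Orr; I should say at once that I do not expect it to settle \Cref{conj:ZPY(1)} in full, and the theorems proved below are the cases of it in which an extra hypothesis supplies an arithmetic input that is missing in general. Assume for contradiction that $C$ meets infinitely many special curves other than those handled by Habegger--Pila \cite{HP12}; by their result these are the strongly special ones, so we obtain points $P_k=(x_1^{(k)},x_2^{(k)},x_3^{(k)})\in C(\Qbar)$ and integers $M_k,N_k$ with $\Phi_{M_k}(x_1^{(k)},x_2^{(k)})=\Phi_{N_k}(x_2^{(k)},x_3^{(k)})=0$ whose \emph{complexity} $\Delta_k:=\max(M_k,N_k)$ tends to infinity. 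Since \cite{PST+} disposes of the case that a coordinate is constant on $C$, while \cite{HP12} forces all but finitely many $P_k$ to have no special coordinate, I may assume neither happens; fixing elliptic curves $E_i^{(k)}$ of $j$-invariant $x_i^{(k)}$, the curves $E_1^{(k)},E_2^{(k)},E_3^{(k)}$ are then pairwise isogenous and without CM.

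The first task is to manufacture the relevant G-functions. Restricting the three coordinate projections to a dense open $U\subseteq C$ gives elliptic curves $\cE_1,\cE_2,\cE_3$ over $U$; after a finite \'etale base change trivializing enough level structure I consider their relative first de Rham cohomology with the Gauss--Manin connection and expand a flat frame in a local parameter $t$ at an algebraic base point $P_0\in U(\Qbar)$ which is neither a cusp nor a CM point. Suitably normalized, the entries of this frame are vectors of G-functions in $t$ — this is the classical fact that the Picard--Fuchs operator of the Legendre family is hypergeometric with a basis of G-function solutions, and it is the essential place where $C$ being defined over $\Qbar$ is used.

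The heart of the matter is then the dichotomy supplied by the Andr\'e--Bombieri inequality for values of G-functions: for each $k$ and each place $v$ of a number field containing $t(P_k)$, either
\begin{itemize}
\item[(i)] $\log\abs{t(P_k)}_v\geq -c_1\bigl(h(P_k)+\log\Delta_k\bigr)-c_2$, with $c_1,c_2$ depending only on $C$ and $v$; or
\item[(ii)] the $\Qbar$-linear relation between the period vectors of two of the $\cE_a$ forced at $P_k$ by the isogeny $E_a^{(k)}\to E_b^{(k)}$ holds identically on the cover of $C$, so those two families are isogenous over $C$ and $C$ lies in a Hecke correspondence.
\end{itemize}
Alternative (ii) for a fixed pair produces one correspondence, so if it held for both pairs and infinitely many $k$ it would place $C$ in a proper special subvariety, against hypothesis; hence for all large $k$ alternative (i) holds at every place. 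The plan is then to play this, via the product formula, against an upper bound of the form $h(P_k)\ll\log\Delta_k$ and a lower bound on how $v$-adically near the special locus $P_k$ must lie at some place, forcing a contradiction once $\Delta_k$ is large.

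Making that final step unconditional is where I expect the real difficulty, and it is exactly here that the hypotheses of the theorems below enter. First, the ``closeness'' input — a place at which $P_k$ is provably very near a special curve, the analogue of the archimedean estimate $\Im\tau\sim\sqrt{\abs{D}}$ that drives Andr\'e's proof of Andr\'e--Oort in $Y(1)^2$ — is not available for arbitrary $C$; but if a modular curve lies in the closure of $C$ on the boundary of $(\bP^1)^3$, one may take $P_0$ to be a cusp, where the parameter is $q$, the expansion $j=q^{-1}+744+\cdots$ is integral, and the periods are the explicit classical ones, so the G-function estimate becomes sharp enough to conclude. Second, the non-archimedean contributions to the product formula must also be controlled, and at a place where some $E_i^{(k)}$ has supersingular reduction the $v$-adic theory of periods degenerates; requiring the $E_i^{(k)}$ to have few places of supersingular reduction suppresses precisely that term and gives the second, unconditional theorem. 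A proof in complete generality would in addition need unconditional lower bounds for the Galois orbits of the $P_k$ — equivalently, upper bounds for the isogeny degrees in terms of $[\bQ(P_k):\bQ]$ — to feed a Pila--Wilkie count, and these remain out of reach except under GRH, which is why \Cref{conj:ZPY(1)} stays open.
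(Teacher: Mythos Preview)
The statement is a conjecture which the paper does not prove; both the paper and you restrict to the two special cases (a modular point in the boundary; few supersingular places), and you correctly identify the G-function method as the engine. Your high-level sketch is in the right spirit, but several of the concrete mechanisms you describe do not match how the argument actually runs.

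First, your dichotomy (i)/(ii) is not the structure of the method. The paper does not argue place by place that either the point is $v$-adically far from $s_0$ or a functional relation holds. Rather, for \emph{every} place $v$ with $|x(s)|_v<R_v^{\leq}$ one constructs a $v$-adic relation $Q_v$ of bounded degree (via the action of the isogeny on de Rham/crystalline cohomology), takes the product $Q=\prod_v Q_v$ to obtain a global relation, and proves $Q$ nontrivial once and for all by computing the ideal of functional relations exactly --- this uses the geometric Andr\'e--Grothendieck period conjecture (Ayoub, Nori) to show the ideal is generated by the obvious determinant relations, which $Q_v$ visibly avoids. Andr\'e's theorem then gives $h(s)\leq c\,(\deg Q)^{c'}$. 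There is no ``alternative (ii)'' to exclude case by case, and the height bound is the \emph{output}, not an input to be played against another bound.

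Second, your account of why the two hypotheses help is off. In the boundary case the point is not that the $q$-expansion makes the estimates ``sharp'', but that $v$-adic proximity of $s$ to a base point $s_0$ with one cuspidal coordinate forces $\cE_{i,s}$ to have bad (multiplicative) reduction at every such $v$; hence only archimedean and bad-reduction places contribute to the product and $\deg Q\leq c\,[K:\bQ]$. In the interior case the issue at supersingular places is not that periods ``degenerate''. At places of good ordinary reduction the endomorphism algebra of the reduction is an imaginary quadratic field, so the matrices $M_v$, $M_v'$ representing the two isogenies can be simultaneously diagonalized, giving a single commutation relation independent of~$v$. Only supersingular (and archimedean, bad) places then require their own $Q_v$, so bounding the number of supersingular places bounds $\deg Q$.

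Third, your closing claim that large Galois orbits ``remain out of reach except under GRH'' is wrong in this context. Once the G-function machinery gives $h(s)\leq c\,[K(s):\bQ]^{c'}$, the Masser--W\"ustholz / Gaudron--R\'emond isogeny estimates yield $\Delta(s)\leq \mathrm{poly}(h(s),[K(s):\bQ])\leq \mathrm{poly}([K(s):\bQ])$, which is precisely the required Galois-orbit lower bound, unconditionally; the Pila--Zannier count then finishes. The genuine obstruction to the full conjecture is producing the height bound (equivalently, bounding $\deg Q$) without the auxiliary hypotheses --- not the Galois-orbit step.
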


Because the André--Oort conjecture is known, it suffices to consider only those points $s=(x_1,x_2,x_3)\in Y(1)^3$ whose coordinates are non-singular moduli. For such points, the positive integers $M,N\in\bN$ for which
\begin{equation} \label{eqn:two-modular-relations}
\Phi_M(x_1,x_2)=\Phi_N(x_2,x_3)=0
\end{equation}
are uniquely determined.

We henceforth refer to a point $s=(x_1,x_2,x_3) \in (Y(1) \setminus \Sigma_{\rm CM}) ^3$ satisfying~\eqref{eqn:two-modular-relations} as {\it modular}.
For a modular point, we define 
\[\Delta(s):=\max\{M,N\}.\]

By an ingenious argument, Habegger and Pila showed that Conjecture \ref{conj:ZPY(1)} holds when $C$ is {\it asymmetric}, which (in this case) is to say that the degrees of the coordinate projections $C\to Y(1)$ are not all equal \cite[Thm.~1]{HP12}. In some sense, this covers most curves. On the other hand, it is not clear to us how to generalize their argument, or how to deal with symmetric curves specifically.

With this in mind, the present authors have more recently explored an alternative approach, based on techniques of Andr\'e and Bombieri using {\it G-functions}.     
In \cite{Y(1)}, the first- and second-named authors showed that Conjecture \ref{conj:ZPY(1)} holds for any $C$ whose Zariski closure $\overline{C}$ in $X(1)^3\cong(\bP^1)^3$ (the Baily--Borel compactification) contains $(\infty,\infty,\infty)$. In \cite{Papas:Y(1)}, the third-named author observed that this could be extended to any curve $C$ for which $\overline{C}\setminus C$ contains a point $(x_1,x_2,x_3)$ with $x_1,x_2,x_3\in\{\infty\}\cup\Sigma_{\rm CM}$. In other words, $\overline{C}$ intersects what we might call a {\it special point in the boundary}.

Note that the boundary $X(1)^3 \setminus Y(1)^3 = (\bP^1)^3 \setminus \bA^3$ can naturally be identified with the disjoint union of three copies of $Y(1)^2$ (where exactly one coordinate is~$\infty$), three copies of $Y(1)$ (where exactly two coordinates are~$\infty$), and the singleton $\{(\infty,\infty,\infty)\}$.

Since $\overline{C}$ must always intersect the boundary, it is interesting to ask which intersection conditions facilitate our method. A natural next step in this direction is the case of a curve intersecting a special curve in one of the copies of $Y(1)^2$ in the boundary. 

We say that a point in the boundary is {\it modular} if it belongs to one of the copies of $Y(1)^2$ and its coordinates $x_1,x_2$ satisfy $\Phi_N(x_1,x_2)=0$ for some $N\in\mathbb{N}$. In this terminology, we prove the following.

\begin{theorem}\label{teo:main}
    Conjecture \ref{conj:ZPY(1)} holds under the assumption that $\overline{C}\setminus C$ contains a modular point.
\end{theorem}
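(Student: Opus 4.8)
The plan is to follow the strategy of \cite{Y(1)} and \cite{Papas:Y(1)}, adapting the G-function method to the situation where the boundary point lying on $\overline C$ is modular rather than a special point (or the point $\infty$). So suppose for contradiction that $C$ contains infinitely many modular points $s_n=(x_{1,n},x_{2,n},x_{3,n})$; after passing to a subsequence we may assume $\Delta(s_n)\to\infty$, since the Habegger--Pila result \cite[Thm. 2]{HP12} already handles points whose isogeny parameters $M,N$ stay bounded (these lie on finitely many special curves, hence cannot be Zariski dense in $C$ unless $C$ itself is special). Let $P=(p_1,p_2)\in\overline C\setminus C$ be the given modular point, sitting in one of the three copies of $Y(1)^2$ in the boundary; say it is the copy where the third coordinate degenerates, so $P$ records $x_3\to\infty$ while $x_1\to p_1$, $x_2\to p_2$ with $\Phi_{N_0}(p_1,p_2)=0$. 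I would first set up, near $P$, a local analytic parametrisation of $C$ by a disc, choose a suitable local parameter $t$ (a uniformiser at the cusp in the $x_3$-direction), and express the three coordinate functions as Puiseux/power series in $t$ over a number field.

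The heart of the argument is to produce, along $C$ near $P$, a family of G-functions whose special values at the algebraic points $s_n$ are forced to be unusually small (a ``small value'' statement), and then to contradict this with a height lower bound coming from the arithmetic of these values being algebraic numbers of controlled height and degree. Concretely, the degeneration $x_3\to\infty$ gives us, via the Tate curve / $q$-expansion of the relevant period integrals, a G-function relation expressing the fact that the third elliptic curve is $N_n$-isogenous to the second; this is exactly the mechanism used in \cite{Y(1)} when all three coordinates went to $\infty$. The novelty here is that only one coordinate degenerates at $P$: in the other two coordinates the point stays in the open part $Y(1)^2$, and moreover at the finite part the elliptic curves attached to $x_{1,n},x_{2,n}$ degenerate to those attached to $p_1,p_2$, which are themselves $N_0$-isogenous. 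I would exploit this by using the $N_0$-isogeny between the $p_i$'s to relate the two finite coordinates' period functions to a single G-function, so that the pair $(x_{1,n},x_{2,n})$ contributes a G-function relation of the same type as in the asymmetric/cusp analysis, while $x_{3,n}$ contributes the cusp-type relation. Combining the two gives enough G-function relations along $C$ — together with the André--Bombieri inequality and the archimedean/non-archimedean estimates for values of G-functions at points near the cusp — to derive the desired contradiction with $\Delta(s_n)\to\infty$, via an inequality of the shape ``(small value bound) $<$ (height bound)'' that fails for $n$ large.

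In terms of the order of steps: (i) reduce to a sequence of modular points with $\Delta(s_n)\to\infty$ accumulating at $C$ near $P$; (ii) fix the local parameter at $P$ and the number field of definition, and record the $q$-expansions/period functions in each of the three coordinate directions; (iii) build the relevant G-functions — one from the degenerate third coordinate via the Tate curve, and one from the two finite coordinates using the $N_0$-isogeny at $P$ to splice them — and verify the G-function property (bounded denominators, finite archimedean radii of convergence) as in \cite{Y(1)}; (iv) prove the local ``blow-up''/non-degeneracy of the associated logarithmic derivatives so that the André--Bombieri machinery applies (this is where one must use that $C$ is not contained in a proper special subvariety, to rule out the vanishing of the key Wronskian-type quantity); (v) apply the André--Bombieri inequality to get the small-value estimate at the $s_n$; (vi) bound the height and degree of the algebraic values at $s_n$ in terms of $\Delta(s_n)$; (vii) combine to reach a contradiction for $n\gg 0$.

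I expect the main obstacle to be step (iii)–(iv): namely, correctly constructing and combining the G-function relations when only one of the three coordinates degenerates at $P$, and proving the non-degeneracy (non-vanishing) statement that licenses the André--Bombieri inequality. In \cite{Y(1)} all three coordinates degenerate simultaneously at $(\infty,\infty,\infty)$, which makes the local analysis uniform; here the ``mixed'' nature of $P$ (one cusp-type direction, two finite directions related by the fixed isogeny of degree $N_0$) means the relevant period functions live in genuinely different local systems, and one has to be careful that the splicing via the $N_0$-isogeny produces a single honest G-function with the right growth and denominator properties, and that the resulting logarithmic-derivative matrix does not degenerate — the latter must ultimately be forced by the hypothesis that $C$ lies in no proper special subvariety, via a dichotomy/monodromy argument. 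The remaining steps (height bounds, the final numerical contradiction) I expect to be essentially parallel to \cite{Y(1),Papas:Y(1)}.
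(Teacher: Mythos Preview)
Your high-level strategy --- G-functions plus Pila--Zannier and isogeny estimates --- matches the paper's, and you are right that the $N_0$-isogeny at the boundary point $P=(b_1,b_2,\infty)$ is used to set up compatible bases for the period G-functions of $\cE_1$ and $\cE_2$. However, there is a genuine gap: you misidentify the role of the degenerating third coordinate. In the paper's argument the third elliptic scheme does \emph{not} contribute a G-function or a relation at all. One works only with the G-functions attached to $\cE_1$ and $\cE_2$, and at \emph{every} place $v$ with $|x(s)|_v$ small --- including places of good reduction --- one constructs a $v$-adic relation of degree $\leq 2$ coming from the isogeny $\cE_{1,s}\to\cE_{2,s}$ (at places of good reduction this requires crystalline cohomology: one shows that the trace of the matrix representing the isogeny on cohomology lies in $\bQ$). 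The product over all such $v$ is the global relation fed into Andr\'e's theorem \cite[Thm.~5.2]{And89}; nontriviality is checked via the functional Andr\'e--Grothendieck period theorem, not a monodromy dichotomy.

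What the third curve actually buys is the bound on the \emph{degree} of this product --- equivalently, on the number of contributing places. At any such finite $v$, the point $s$ reduces to the same point as $s_0$, so $\cE_{3,s}$ has the same (multiplicative) reduction as $\cE'_{3,s_0}=\bG_m$. Since $s$ is modular, $\cE_{1,s}$ is isogenous to $\cE_{3,s}$ and hence also has bad reduction at $v$; but $\cE_{1,s}$ in turn has the same reduction as $\cE'_{1,s_0}$, the \emph{fixed} elliptic curve with $j$-invariant $b_1$. Thus $v$ lies over one of the finitely many rational primes of bad reduction of this fixed curve, giving $|\Sigma_s|=O([K(s):\bQ])$. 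Without this observation you cannot bound the degree of the global relation, and Andr\'e's theorem yields nothing useful. Your proposed ``cusp-type relation from $x_3$ via the Tate curve'' plus ``finite-type relation from $x_1,x_2$'' structure does not supply this bound, and your ``small value'' framing obscures that the required input is a nontrivial global \emph{polynomial} relation of controlled degree, not a smallness estimate.
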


Our methods also yield a Zilber--Pink-type result without conditions at the boundary, but with an extra restriction on the modular points of~$C$ which we consider. In order to state this result, let $C$ be as in the statement of Conjecture \ref{conj:ZPY(1)}. Observe that we can always assume that $C$ contains a modular point $s_0=(b_1,b_2,b_3)$.
(Otherwise, there is nothing to prove!) Observe also that any modular point on $C$ belongs to $C(\Qbar)$. We denote by $K_0$ the number field $\bQ(b_1,b_2,b_3)$, and we say that a finite place $v$ of any finite extension of $K_0$ is a {\it place of supersingular reduction} if the elliptic curve with $j$-invariant $b_1$ (equivalently, $b_2$ or $b_3$) has supersingular reduction at $v$ (for the minimal Weierstrass model over $K_v$). For any such place~$v$, lying over a rational prime $p$, we denote by $|\cdot|_v$ the absolute value extending the standard $p$-adic absolute value on $\bQ$. 

\begin{definition}
Given $\delta > 0$, we say that a modular point $s=(x_1,x_2,x_3)\in C$ has {\bf supersingular exponent} $\delta$ if the set of places $v$ of $K_0(x_1,x_2,x_3)$ of supersingular reduction for which the inequalities
\begin{equation} \label{eqn:supersing-close-condition}
    |x_1-b_1|_v,|x_2-b_2|_v,|x_3-b_3|_v<1
\end{equation}
hold has cardinality at most $\Delta(s)^{\delta}$. 
\end{definition}

\begin{remark}
This definition may be read as saying that $s$ is $v$-adically close to~$s_0$ for only a small number of places of supersingular reduction. Note that, if $v$ is a place of supersingular reduction satisfying~\eqref{eqn:supersing-close-condition}, then the elliptic curves with $j$-invariants $x_1$, $x_2$, $x_3$ also have supersingular reduction at~$v$.
\end{remark}

\begin{remark}
By a result of Serre, the set of places of supersingular reduction has density zero. On the other hand, it is conjectured to be infinite, and this is known if $K_0$ has at least one real embedding \cite{Elkies} (see \cite{Charles} for a related result). However, for any $K_0$ and any $s \neq s_0$, there are only finitely many places satisfying~\eqref{eqn:supersing-close-condition}.
\end{remark}

In this terminology, we prove the following.

\begin{theorem}\label{teo:super}
    Let $C$ be as in \cref{conj:ZPY(1)}. Let $(b_1,b_2,b_3)$ be a modular point in~$C$. There exists $\delta>0$ such that $C$ contains only finitely many modular points of supersingular exponent $\delta$, with respect to~$(b_1,b_2,b_3)$.
\end{theorem}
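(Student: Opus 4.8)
\emph{Strategy and setup.} The plan is to run the G-function (Andr\'e--Bombieri) method of \cite{Y(1)} and \cite{Papas:Y(1)} but with no privileged point of $\overline C\setminus C$ available: the supersingular-exponent hypothesis will play the role those papers assigned to a special boundary point, namely that of controlling the non-archimedean error. Suppose, for contradiction, that for every $\delta>0$ there are infinitely many modular points $s=(x_1,x_2,x_3)\in C$ with at most $\Delta(s)^{\delta}$ places $v$ of supersingular reduction such that $|x_i-b_i|_v<1$ for all $i$; fix $\delta$, to be chosen, and such an infinite set $\{s_k\}$, and write $\nu_k\le\Delta(s_k)^{\delta}$ for the corresponding number of supersingular places. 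Then $\Delta(s_k)\to\infty$, since otherwise only finitely many special curves occur, each meeting $C$ in finitely many points. Base everything at $b$: fix an algebraic local parameter $t$ at $b$ on the smooth locus $C^\circ$ with $t(b)=0$, put $\xi_k:=t(s_k)\in\Qbar$, let $\mathcal A\to C^\circ$ be the family with fibre $E_{x_1}\times E_{x_2}\times E_{x_3}$, and fix a number field $K$ of definition.

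\emph{Periods and relations.} The entries of a period matrix of $\mathcal A$ relative to a frame of $H^1_{\mathrm{dR}}(\mathcal A/C^\circ)$ algebraic over $K(t)$ and a flat $\bZ$-basis of $H_1$ of a nearby fibre expand at $t=0$ into a vector $Y(t)=(1,y_1(t),\dots,y_m(t))$ of G-functions; this is Andr\'e's theorem that Gauss--Manin connections over curves are G-operators, together with the archimedean convergence of periods. For each $k$, the minimal isogenies $E_{x_1}\to E_{x_2}$ and $E_{x_2}\to E_{x_3}$, of degrees $M_k$ and $N_k$, have de Rham and Betti realisations which, read off against the chosen frames, yield a $\Qbar$-linear form $L_k$ with $L_k\bigl(Y(\xi_k)\bigr)=0$; crucially, one must choose the de Rham frame so that the logarithmic size of $L_k$ is $\ll\log\Delta(s_k)$, driven only by the denominators of the isogenies and \emph{not} by $h(s_k)$. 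Since $C$ is contained in no proper special subvariety — in particular in no $\{\Phi_N(X_i,X_j)=0\}$ — and, after the reductions recalled in the introduction, no coordinate of $C$ is constant, the monodromy of $\mathcal A$ over $C$ is Zariski dense in $\mathbf{SL}_2^3$, which has no nonzero invariant vector in $H^1$; hence no nontrivial $\Qbar$-linear relation holds identically among $1,y_1(t),\dots,y_m(t)$, so $\ell_k(t):=L_k\bigl(Y(t)\bigr)$ is a \emph{nonzero} G-function vanishing at $\xi_k$, with order of vanishing there bounded by the rank of the Gauss--Manin system.

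\emph{The global inequality.} I would then feed the $\ell_k$ into the global G-function inequality of \cite{Y(1)}: comparing the archimedean size of $\ell_k$ (governed by $\Delta(s_k)$ through $L_k$), the common denominators of its Taylor coefficients (geometrically bounded, by the G-function property), and the $v$-adic radii of convergence $R_v(Y)$ of the period vector over all finite places, one obtains an inequality whose only ``defect'' term is $\sum_{v\nmid\infty}\tfrac{[K_v:\bQ_p]}{[K:\bQ]}\max\bigl(0,-\log R_v(Y)\bigr)$. The decisive local input is that, for periods of a product of elliptic curves, $R_v(Y)=1$ at every finite place of good, non-supersingular reduction — ordinary reduction gives a unit-root splitting of the $p$-adic Gauss--Manin connection, hence convergence on the whole residue disc — whereas $R_v(Y)<1$ can occur only at places of supersingular reduction, and even there only when $\xi_k$ lies in the residue disc of $0$, i.e. when $|x_i-b_i|_v<1$ for all $i$, the loss being bounded per place. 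Hence the defect is $\ll\nu_k$, and one is left with a bound of the shape $\Delta(s_k)\ll_C\nu_k^{A}+O(1)$ for some $A=A(C)\ge1$ — morally, the isogeny degree must be ``paid for'' by the bad places. Combined with $\nu_k\le\Delta(s_k)^{\delta}$ this reads $\Delta(s_k)\ll_C\Delta(s_k)^{A\delta}+O(1)$, so choosing $\delta<1/A$ bounds $\Delta(s_k)$, a contradiction; if the inequality is only available on average over a $\Gal(\Qbar/K_0)$-orbit of $s_k$, one couples it with the standard lower bound for the orbit size, as in the proof of \Cref{teo:main}.

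\emph{Main obstacle.} The crux, and where I expect the real work, is the non-archimedean analysis above: proving that the period G-functions converge on the full residue disc away from the supersingular places and quantifying the loss at a supersingular place. This is exactly where $p$-adic Hodge theory enters — Dwork's unit-root splitting and the Frobenius structure on the Gauss--Manin connection of elliptic curves — and where the product-of-elliptic-curves geometry, and the precise shape of the supersingular-exponent hypothesis, are forced upon us. A secondary subtlety is the normalisation in the second step: the size of $L_k$ must be made genuinely independent of $h(s_k)$, using only the isogeny denominators, lest the concluding inequality become circular.
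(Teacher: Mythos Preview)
Your broad picture is right---G-functions, relations coming from the isogenies, supersingular places as the obstacle---but the central mechanism is misidentified, and this creates a genuine gap.

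The linear form $L_k$ you build from the Betti realisation of the isogenies is only an \emph{archimedean} relation; there is no reason for it to hold $v$-adically at finite places, so it cannot be fed into Andr\'e's Hasse principle (\cite[Thm.~5.2]{And89}, cf.\ \cite[Thm.~2.8]{Y(1)}), which requires the relation to vanish at \emph{every} place $v$ with $|\xi_k|_v<R_v$. Your attempt to sidestep this via the radii of convergence conflates two different quantities: the global term $\sum_v\max(0,-\log R_v(Y))$ is a \emph{constant} depending only on the Gauss--Manin system (hence on $C$ and $b$), not on~$s_k$, so it cannot absorb~$\nu_k$. What actually varies with~$s_k$ is the \emph{degree} of the global relation one must produce, and that degree is governed by the number of places at which one has to supply an individual $v$-adic relation.

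The paper's proof handles this as follows. For each $v$ with $|x(s)|_v<R_v^\leq$ one constructs a $v$-adic relation of degree $\leq 2$ (Proposition~\ref{prop:rel}), via Betti--de~Rham comparison at $v\mid\infty$, crystalline--de~Rham comparison at good finite~$v$, and rigid uniformisation at bad~$v$; but these relations depend on~$v$ through an integer $\tr(M_v)$. The decisive step, and the one that replaces your radius-of-convergence heuristic, is Proposition~\ref{prop:rel2}: bringing in the \emph{third} elliptic scheme and the second isogeny~$f'$, one observes that at a place of good \emph{ordinary} reduction the endomorphism algebra $\End(\cE_{2,s,k_v})\otimes\bQ$ is imaginary quadratic, so the crystalline action can be diagonalised, the matrices $M_v,M_v'$ become diagonal and hence commute, and the commutation identity $\Lambda_{v,21}\Lambda'_{v,12}=\Lambda_{v,12}\Lambda'_{v,21}$ yields a single degree-$4$ relation \emph{independent of~$v$}. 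Multiplying this one relation by the degree-$2$ relations at the remaining (archimedean, bad, supersingular) places gives a global relation of degree $\ll[K(s):\bQ]\cdot|\Sigma^{\mathrm{sup}}_{\tilde s}|\ll[K(s):\bQ]\,\Delta(s)^\delta$, and Andr\'e's theorem then yields the height bound of Theorem~\ref{teo:super-ht}. The conclusion $\Delta(s_k)\ll\nu_k^A$ that you aim for does not come out of the G-function step alone; one gets a height bound, and finiteness follows only after the Pila--Zannier step coupled with the Masser--W\"ustholz isogeny estimate (Theorem~\ref{teo:galois-super}), exactly as in \S\ref{sec:derivations}.
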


The constant $\delta$ afforded to us by Theorem \ref{teo:super} is relatively explicit. Indeed, any $\delta$ satisfying the conditions of Theorem \ref{teo:galois-super} is allowed. By Remark \ref{rem:constant}, therefore, we are allowed to take any $\delta<(72\ell)^{-1}$, where $\ell$ (depending on~$C$) is defined in Section~\ref{sec:cover}. It is straightforward to control $\ell$ in terms of the genus of $C$.

To prove Theorems \ref{teo:main} and \ref{teo:super}, we obtain the following height bounds. The derivations of Theorems \ref{teo:main} and \ref{teo:super} from these height bounds is now standard---using the so-called Pila--Zannier strategy and Masser--W\"ustholz-type isogeny estimates---but we will give a few details in Section \ref{sec:derivations}. In order to state our height bounds, we denote by $h(x_1,x_2,x_3)$ the maximum of the {\it absolute logarithmic heights} of the $x_i$, as defined, for example, in \cite[\S 2.2]{HP12}.

\begin{theorem}\label{teo:main-ht}
    Let $C$ satisfy the conditions of Theorem \ref{teo:main}. There exist constants $\newC{main-ht-mult}$ and $\newC{main-ht-exp1}$ such that, for any modular point $(x_1,x_2,x_3)\in C$, we have
    \[h(x_1,x_2,x_3)\leq\refC{main-ht-mult}[\bQ(x_1,x_2,x_3):\bQ]^{\refC{main-ht-exp1}}.\]
\end{theorem}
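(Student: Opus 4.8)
The plan is to run the G-function method of André and Bombieri, in the form developed by the authors in \cite{Y(1)} and \cite{Papas:Y(1)}, adapted to the situation where the distinguished boundary point $(x_1^0, x_2^0, x_3^0) \in \overline{C} \setminus C$ is modular rather than special. Concretely, after base change, we may assume $C$ is defined over a number field and that near the boundary point the curve $C$ admits a local analytic parametrization in which, say, $x_3 \to \infty$ while $(x_1, x_2)$ tends to a finite point $(x_1^0, x_2^0)$ lying on the modular curve $\Phi_N(\cdot, \cdot) = 0$. The first step is to set up the relevant G-functions: these come from the periods of the universal family of elliptic curves (equivalently, from Taylor expansions at the boundary point of period functions / the uniformizing parameter $q$), restricted to $C$. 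The presence of $\infty$ in the third coordinate is what makes these functions G-functions — it gives the needed local nilpotent/regular-singular structure of the Picard--Fuchs equation — while the modularity of the finite part $(x_1^0, x_2^0)$ is exactly what will be exploited later to get an arithmetic relation among the archimedean and non-archimedean contributions.

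The second step is the core dichotomy of the G-function method. Given a modular point $s = (x_1, x_2, x_3) \in C$, let $D = [\bQ(x_1,x_2,x_3):\bQ]$ and let $\Delta = \Delta(s)$. On one hand, the global relations satisfied by the G-functions (the André--Bombieri ``vanishing'' input, coming from the fact that $s$ lies on a special subvariety — here an isogeny relation of degree $\leq \Delta$ among the three elliptic curves) force the G-functions evaluated near $s$ to satisfy polynomial relations of controlled degree and height; quantitatively, one gets a lower bound for an archimedean distance $|x_3 - \text{(pole)}|$ or for a suitable norm, of the shape $\gg \exp(-c\, D\, \log \Delta)$ or so, \emph{provided} $s$ is close enough $v$-adically to the boundary point for many places $v$. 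On the other hand, the theory of G-functions (Bombieri's theorem, or the André--Bombieri inequality) gives an \emph{upper} bound for the same quantity in terms of $h(s)$, $D$, and the number of places at which $s$ is close to the boundary point, of the form roughly $\exp(-\,[\text{(something)}\cdot h(s)] + c\, D \log D)$. Comparing the two yields $h(s) \leq c_1 D^{c_2}$ unless $s$ fails to be $v$-adically close to the boundary point for a large (polynomial in $\Delta$) set of places — and in \cite{Y(1)}, \cite{Papas:Y(1)} that alternative is ruled out, or rather absorbed, using the fact that $\Delta$ itself is polynomially bounded in terms of $h(s)$ and $D$ (Masser--Wüstholz / the interplay between isogeny degree and height), so the ``bad places'' scenario also returns a bound of the desired shape. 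The new feature here is that the finite coordinates $x_1, x_2$ contribute their own G-functions and their own specialization estimates, and the modular point condition $\Phi_N(x_1^0, x_2^0) = 0$ must be used to match up the local data in the first two factors with that in the third; I would handle this by treating the pair $(x_1, x_2)$ near $(x_1^0, x_2^0)$ via the known $Y(1)^2$ case (André's original situation) and then splicing, along $C$, the resulting estimate with the $x_3 \to \infty$ estimate.

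The third step is bookkeeping: collect the constants, check that all exponents depend only on $C$ (and on fixed auxiliary choices — the local parametrization, a bound on $N$ for the boundary modular curve, the degree of $C$), and convert the distance-to-boundary inequalities into the height inequality $h(x_1,x_2,x_3) \leq \refC{main-ht-mult}[\bQ(x_1,x_2,x_3):\bQ]^{\refC{main-ht-exp1}}$ as stated. I expect the main obstacle to be precisely the matching/splicing in step two: ensuring that the local G-function relations at the \emph{finite} modular point $(x_1^0, x_2^0)$ and at the \emph{cuspidal} point in the third coordinate can be combined on $C$ without losing control of heights or introducing denominators that are not polynomially bounded, and in particular verifying that the archimedean lower bound and the global (product-formula) upper bound genuinely refer to comparable quantities along $C$. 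A secondary technical point is uniformity in $N$: a priori the boundary modular point could lie on $\Phi_N = 0$ for large $N$, but since this $N$ is fixed by $C$ (it is a feature of $\overline{C}$, not of the varying point $s$), it enters only the constants $\refC{main-ht-mult}, \refC{main-ht-exp1}$ and causes no genuine difficulty.
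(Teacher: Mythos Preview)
Your outline has the right framework (the G-function method of Andr\'e--Bombieri as in \cite{Y(1)}), but the mechanism you propose for bounding the degree of the global relation is not the one that makes Theorem~\ref{teo:main-ht} work, and the role you assign to the $\infty$ coordinate is inverted.

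First, the $\infty$ in the third coordinate is \emph{not} what makes the period functions G-functions; the G-functions come from the relative de Rham cohomology of the elliptic schemes $\cE_1,\cE_2$ associated to the two \emph{finite} coordinates (see Section~\ref{sec:GF-ab-sch}), and this needs no degeneration. The modularity of $(b_1,b_2)$ supplies an isogeny $f_0\colon\cE_{1,s_0}\to\cE_{2,s_0}$, which lets one normalise bases so that Proposition~\ref{prop:rel} applies: for each modular $s\in C$ and each place $v$ with $s^\van\in\Delta_v$, the isogeny $\cE_{1,s}\to\cE_{2,s}$ yields a $v$-adic relation of degree~$\leq 2$ among the evaluations of the G-functions at $x(s)$. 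No splicing of a ``$Y(1)^2$ estimate'' with an ``$x_3\to\infty$ estimate'' is needed; there is a single family of G-functions and a single application of Andr\'e's Hasse principle \cite[Thm.~5.2]{And89}.

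Second, and this is the genuine gap in your proposal: the degree of the product relation $Q=\prod_{v\in\Sigma_s}Q_v$ must be bounded \emph{independently of $s$ and of $\Delta(s)$}. You propose a dichotomy in which the ``bad case'' (many places $v$ with $s$ close to $s_0$) is absorbed via a Masser--W\"ustholz bound $\Delta\leq c(h(s)D)^{c'}$; but feeding that back into a degree bound polynomial in~$\Delta$ gives a circular inequality $h(s)\leq {\rm poly}(D,h(s))$ that need not close. The paper avoids this entirely. The actual role of the $\infty$ coordinate is the following: if $v\in\Sigma_{K,f}$ and $s^\van\in\Delta_v$, then $\fG'_{3,\fs,k_v}\cong\fG'_{3,\fs_0,k_v}\cong\bG_{m,k_v}$ (since $b_3=\infty$), so $\cE_{3,s}$ has bad reduction at~$v$; since $s$ is modular, $\cE_{1,s}$ is isogenous to $\cE_{3,s}$ and hence also has bad reduction at~$v$; and since $\cE_{1,s}$ and $\cE_{1,s_0}$ have the same reduction at~$v$, the \emph{fixed} elliptic curve with $j$-invariant $b_1$ has bad reduction at~$v$. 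Thus $\Sigma_s$ is contained in the archimedean places together with the finitely many primes of bad reduction of a fixed curve, so $\abs{\Sigma_s}\leq c[K:\bQ]$ with $c$ independent of~$s$. This is the step you are missing, and without it the height bound does not follow.
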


\begin{theorem}\label{teo:super-ht}
    Let $C$ and $(b_1,b_2,b_3)$ be as in Theorem \ref{teo:super}. There exist constants $\newC{super-ht-mult}$, $\newC{super-ht-exp}$ and $\newC{super-ht-exp2}$ such that, for any modular point $s=(x_1,x_2,x_3)\in C$ of supersingular exponent $\delta$, we have
    \[h(x_1,x_2,x_3)\leq\refC{super-ht-mult}[\bQ(x_1,x_2,x_3):\bQ]^{\refC{super-ht-exp}}\Delta(s)^{\refC{super-ht-exp2}\delta}.\]
\end{theorem}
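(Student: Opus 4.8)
The plan is to run the G-function method of André and Bombieri, following the strategy already developed by the authors in \cite{Y(1)} and \cite{Papas:Y(1)}, but now with the point $b=(b_1,b_2,b_3)$ playing the role that a boundary point played in the earlier work. Concretely, I would work on a suitable neighbourhood of $b$ in (a smooth model of) $C$ and take a local coordinate $t$ vanishing at $b$. The uniformizing differential equations for the three coordinate projections $C\to Y(1)$ give rise to a system whose solutions (the periods of the relevant families of elliptic curves) are expressed, after pulling back along $t$, by vectors of G-functions $y(t)$ with rational coefficients. The reductions ``$|x_i - b_i|_v < 1$'' say precisely that the modular point $s=(x_1,x_2,x_3)$ reduces to $b$ at $v$, i.e. it lies in the residue disc of $b$, so that $t(s)$ is $v$-adically small; this is what makes the archimedean and non-archimedean estimates of the G-function method applicable at $s$.

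The key steps, in order, are: (i) set up the G-function vector $y(t)$ attached to the periods/quasi-periods along $C$ near $b$, and record the global bound on its coefficients (the ``G-function condition''); (ii) use the modularity of $s$ — the isogenies of degrees $M$ and $N$ realizing $\Phi_M(x_1,x_2)=\Phi_N(x_2,x_3)=0$ — to produce, at $t_0:=t(s)$, a nontrivial $\Qbar$-linear relation among the entries of $y(t_0)$ whose ``height'' (the size of the coefficients) is polynomially bounded in $\Delta(s)$, via Masser–Wüstholz-type isogeny estimates; (iii) apply André's/Bombieri's hypergeometric-type inequality (the ``global'' estimate combining the archimedean upper bound for $|y(t_0)|$ coming from analyticity with the non-archimedean upper bounds at the places of good, supersingular reduction, where $t_0$ is small $v$-adically and the G-operator has nilpotent $p$-curvature so the local radius of convergence is large). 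The supersingular reduction hypothesis enters here: each place $v$ with $|x_i-b_i|_v<1$ and supersingular reduction contributes a definite gain $-\log|t_0|_v$ to the product formula, and having at least $\Delta(s)$-many such places up to $\Delta(s)^{\delta}$ — after choosing $\delta$ small — yields enough non-archimedean saving to beat the archimedean contribution. Comparing the resulting lower bound for the height of the linear relation against the upper bound from step (ii) produces the inequality $h(x_1,x_2,x_3)\le \refC{super-ht-mult}[\bQ(x_1,x_2,x_3):\bQ]^{\refC{super-ht-exp}}\max\{2,\Delta(s)\}^{1/2-\epsilon}$, the exponent $1/2-\epsilon$ being exactly what the André–Bombieri zero-estimate yields when the number of favourable places is a small power of $\Delta(s)$.

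I expect the main obstacle to be step (iii), and specifically the bookkeeping that turns ``few places of supersingular reduction'' into a usable inequality: one must control, uniformly in $s$, both the local radii of convergence of the G-operator at the supersingular places (where the argument $t_0$ need not be a unit but is guaranteed only to be topologically small) and the archimedean size of $y(t_0)$ on the fixed disc, while simultaneously tracking the dependence on $[\bQ(x_1,x_2,x_3):\bQ]$ through the contributions of the bad and the ramified places. A secondary difficulty is verifying that the linear relation produced in step (ii) is genuinely nontrivial — i.e. that the isogeny relation is not already ``visible'' in the fixed part of the period matrix — which requires the hypothesis that $C$ is not contained in a proper special subvariety, used through a monodromy/Galois-orbit argument to guarantee that the G-functions in $y(t)$ are linearly independent over $\Qbar(t)$ together with $1$. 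Once these are in place, the final comparison and the passage from the height bound to Theorem \ref{teo:super} via Pila–Zannier counting is routine and will be carried out in Section \ref{sec:derivations}.
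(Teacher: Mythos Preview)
Your proposal has the central mechanism backwards. You describe the supersingular places as contributing ``non-archimedean saving'' and say that having enough of them ``yields enough non-archimedean saving to beat the archimedean contribution.'' In the paper's argument the supersingular places are precisely the \emph{obstacle}, not the source of gain, and the hypothesis ``supersingular exponent $\delta$'' is a \emph{smallness} condition: there are at most $\Delta(s)^\delta$ such places.

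Here is what actually happens. One constructs, for each place $v$ with $|x(\tilde s)|_v$ small, a $v$-adic polynomial relation $Q_v$ among the G-functions evaluated at $x(\tilde s)$, and then takes the product $Q=\prod_v Q_v$ to get a global relation. Andr\'e's theorem \cite[Thm.~5.2]{And89} bounds $h(\tilde s)$ polynomially in $[K(s):\bQ]$ and in $\deg Q$; so the whole game is to bound $\deg Q$. The key input is Proposition~\ref{prop:rel2}: at a place of good \emph{ordinary} reduction, $\End(\cE_{2,s,k_v})\otimes\bQ$ is an imaginary quadratic field, so the matrices $M_v,M'_v$ representing the two isogenies on cohomology can be simultaneously diagonalised, and the resulting commutativity relation $\Lambda_{v,21}\Lambda'_{v,12}=\Lambda_{v,12}\Lambda'_{v,21}$ is \emph{independent of $v$}. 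Hence all ordinary places together contribute a single factor of degree $\leq 4$ to $Q$. The remaining places---archimedean, bad, and supersingular---each contribute their own factor of degree $\leq 4$, so $\deg Q$ is bounded by a constant times $[K:\bQ]\cdot|\Sigma^{\rm sup}_{\tilde s}|\leq [K:\bQ]\cdot\Delta(s)^\delta$. Choosing $\delta$ small enough (the paper gives $\delta<6^{-1}(6\ell-1)^{-1}$) makes this compatible with the exponent $1/2-\epsilon$.

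Your step (iii) misses this ordinary/supersingular dichotomy entirely; without it there is no way to control $\deg Q$, since the number of places $v$ with $|x_i-b_i|_v<1$ is typically unbounded. Your step (ii) also brings in Masser--W\"ustholz estimates to control the height of the coefficients of the relation; in the paper's argument only the \emph{degree} matters at this stage, and isogeny estimates appear only later, in Section~\ref{sec:derivations}.
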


As mentioned above, Theorems \ref{teo:main-ht} and \ref{teo:super-ht} are obtained via the so-called {\it G-function method}. We refer to \cite[Section 1.D]{LGO} for a detailed history of this method and its bearing on problems of unlikely intersections. The original motivation, and one of the key novelties, of this paper was to carry out the method around a base point in the interior of a Shimura variety, as opposed to a point of degeneration at the boundary. Theorem \ref{teo:super}, which is close in spirit to \cite[Th\'eor\`eme 1]{And95} (and whose proof is very much inspired by the ideas introduced in \cite{And89} and \cite{And95}) is our main result in this direction. This line of inquiry is followed up in a series of recent papers of the third-named author \cite{PapasI,PapasII,PapasIII}.

\subsection*{Ideas of the proof}

Consider a geometrically irreducible algebraic curve $C \subset Y(1)^3$ defined over a number field~$K$, and a modular point $s_0$, either in $C(K)$ or in~$\ov C(K) \setminus C(K)$.
Let $\cE_1, \cE_2, \cE_3$ denote families of elliptic curves over a suitable Zariski open subset of~$C$ whose $j$-invariants are given by the three projections $C \to Y(1)$.

The Taylor expansions around~$s_0$ of suitable linear combinations of complex periods of these familes of elliptic curves are G-functions (Section~\ref{sec:GF-ab-sch}, following \cite[Sec.~1]{And95}).
We need to interpret the evaluations of these G-functions at non-archimedean places in terms of periods which are suitably functorial, by choosing a suitable notion of ``horizontal'' sections of de Rham cohomology over an analytic neighbourhood~$s_0$.
At places of good reduction for~$\cE_{1,s_0}$, this is achieved using Berthelot and Ogus's comparison between crystalline and de Rham cohomology \cite{BO1983,Ogus84}; in fact, the construction for non-archimedean places of good reduction closely parallels that for archimedean places, so that we are able to treat both in a uniform way in Section~\ref{sec:Case1}.
At places of bad reduction, we instead use Tate uniformisation, similarly to the case of families with multiplicative degeneration at a $K$-point dealt with in~\cite{Y(1),LGO} (Section~\ref{sec:Case2}).

Let $s\in C(L)$ be a modular point, for some finite extension $L$ of~$K$.
For each place~$v$ of~$L$, we use linear algebra calculations starting from the existence of an isogeny $\cE_{1,s} \to \cE_{2,s}$ to construct a non-trivial $v$-adic relation between the evaluations of our G-functions at~$s$ (Sections \ref{sec:Case1} and~\ref{sec:Case2}).
In order to obtain a global relation, we need to multiply these $v$-adic relations together.
However, in general the relations at different places are different, and we do not know how many of them we will need to multiply together, so we do not have control of the degree of the global relation, and hence cannot prove Conjecture~\ref{conj:ZPY(1)} in full.

This is where the extra conditions of Theorems \ref{teo:main-ht} and~\ref{teo:super-ht} come in.
In Theorem~\ref{teo:main-ht} (where $\cE_3$ degenerates to~$\bG_m$ at~$s_0$), a similar trick to that in \cite[Ch. X, 3.1 and~3.4]{And89} ensures that only archimedean places and places of bad reduction are relevant; the number of each of these is proportional to~$[L:\mathbb{Q}]$, so this gives the bound we need (Section~\ref{proof:main-ht}, especially Section~\ref{sec:degree}).
In Theorem~\ref{teo:super-ht}, by taking into account both isogenies $\cE_{1,s} \to \cE_{2,s}$ and $\cE_{1,s} \to \cE_{3,s}$, we are able to construct a single relation of fixed degree which is valid at all relevant places of ordinary good reduction.
We are not able to do the same thing at places of supersingular good reduction, so instead we use the notion of supersingular exponent to impose a restriction on the number of places of supersingular reduction as a condition of the theorem (Sections \ref{sec:triples} and~\ref{proof:super-ht}).

\subsection*{Structure of the paper}

In Section \ref{sec:prelim}, we give standard constructions pertaining to the G-function method. In Section \ref{sec:main}, we explain how to construct $v$-adic relations between the G-functions associated with elliptic schemes having isogenous fibres at a fixed point. In Section \ref{sec:proofs}, we prove Theorems \ref{teo:main-ht} and \ref{teo:super-ht}. In Section \ref{sec:derivations}, we sketch the (now standard) arguments for deducing Theorems \ref{teo:main} and \ref{teo:super} from Theorems \ref{teo:main-ht} and \ref{teo:super-ht}, respectively. In the appendix, we prove some results on rigid analytic neighbourhoods to be used in this and future works. These statements are surely well-known, but we found it convenient to put them together in one place. 

\subsection*{Acknowledgements}

The first-named author is indebted to Gregorio Baldi, with whom he had several interesting discussions around the idea of intersections at the boundary, and to Ananth Shankar, with whom he discussed several aspects of this work. He acknowledges the use of ChatGPT for conceptualisation. For part of this work, he was supported by a grant from the School of Mathematics at the Institute for Advanced Studies, Princeton.

The second-named author's work on this project was supported by the Engineering and Physical Sciences Research Council (EP/Y020758/1).

The third-named author started work on this project while supported by Michael Temkin's ERC Consolidator Grant BirNonArchGeom (770922). For the majority of the work, he received funding from the European Union (ERC, SharpOS, 101087910) and the ISRAEL SCIENCE FOUNDATION (grant No. 2067/23). Latterly, he was supported by the Minerva Research Foundation Member Fund, while in residence at the Institute for Advanced Study, for the academic year 2025-26. 

The authors thank Jacob Tsimerman for kindly answering questions related to his work \cite{BT:GPC} and for several other enlightening discussions.

\section{Preliminaries}\label{sec:prelim}

In this section, we assemble various notation and background material, which will be used throughout the rest of the paper.

\subsection{Matrices} 

For a square matrix $M$, we denote its trace by $\tr(M)$ and its determinant by $\det(M)$. We denote its adjugate by $M^{\rm adj}$ (which, if $M$ is invertible, is equal to $\det(M)M^{-1}$) and its transpose by $M^T$. For any ring $R$ and positive integer~$n$, we denote by $\rM_n(R)$ the ring of $n\times n$ matrices with coefficients in $R$.

\subsection{Fields}

For a number field $K$, we denote its ring of integers by $\cO_K$ and its algebraic closure by $\overline{K}$. By a place $v$ of $K$, we refer to the equivalence class of an absolute value on $K$. We denote the set of places of $K$ by $\Sigma_K$, and we denote the set of archimedean (resp.\ finite, or non-archimedean) places of $K$ by $\Sigma_{K,\infty}$ (resp.\ $\Sigma_{K,f}$). For $v\in\Sigma_{K,f}$, we will use the following notations:  
\begin{itemize}
    \item[] $K_v:=$ the completion of $K$ with respect to $v$
    \item[] $\cO_{K_v}:=$ the ring of integers in $K_v$
    \item[] $\fp_v:=$ the prime ideal of $\cO_K$ corresponding to $v$
    \item[] $k_v:=$ the residue field $\cO_K/\fp_v$
    \item[] $W(k_v):=$ the ring of Witt vectors over $k_v$
\end{itemize}
By abuse of notation, for $v\in\Sigma_{K,\infty}$, we define  $K_v:=\bC$ (even if the completion of~$K$ with respect to~$v$ is~$\bR$). 

For $v\in\Sigma_K$, we denote by $|\cdot|_v$ the absolute value on $K_v$ extending the standard absolute value on $\bQ$. We write $(\cdot)^\van$ for the analytification functor from schemes locally of finite type over $K_v$ to (rigid or complex) analytic spaces over $K_v$. For a power series $F\in K[\![X]\!]$, we let $F^\van$ denote the corresponding analytic function defined on the open disc $D(0,R(F^\van),K_v)$, where $R(F^\van)$ is the $v$-adic radius of convergence.


\subsection{Elliptic curves}

For a scheme $X\to\Spec R$ and an $R$-algebra $R\to S$, we denote by $X_S$ the base-change $X\times_{\Spec R}\Spec S$. For an elliptic curve $E$ over $K$, we say that $E$ has good reduction at $v\in\Sigma_{K,f}$ if the special fibre $\cE_{k_v}$ of the minimal Weierstrass model $\cE$ of $E_{K_v}$ over~$\cO_{K_v}$ is non-singular. We say $E$ has bad reduction at $v$ otherwise. If $\cE_{k_v}$ is non-singular, it is an elliptic curve over $k_v$, and we say that $E$ has ordinary (resp.\ supersingular) reduction at $v$ if $\cE_{k_v}$ is ordinary (resp.\ supersingular). All of these properties are preserved by isogenies over $K$.

Now let $\cX$ be a semiabelian scheme over $\Spec\cO_K$ satisfying $\cX_K\cong E$. Then, for any $v\in\Sigma_{K,f}$, the base-change $\cX_{\cO_{K_v}}$ is the connected N\'eron model of $E_{K_v}$ over $\Spec\cO_{K_v}$ \cite[Prop.~7.4/3]{BLR90}. It follows that $\cX_{k_v}$ is an elliptic curve if and only if $E$ has good reduction at $v$. Moreover, if $\cX_{k_v}$ is an elliptic curve, it is ordinary (resp.\ supersingular) if and only if $E$ has ordinary (resp.\ supersingular) reduction at $v$.


\subsection{Taylor series with respect to a local parameter}\label{sec:TS}
Let $C$ be a smooth geometrically irreducible algebraic curve defined over a number field $K$. Let $s_0\in C(K)$ and let $x\in K(C)$ be a local parameter at $s_0$ (which is to say that $x$ is a generator for the maximal ideal of the local ring $\cO_{C,s_0}$). Let $v\in\Sigma_K$ and let $\fm$ denote the maximal ideal of the stalk $\cO_{C,v,s_0}$ of $\cO_{C^\van}$ at $s^{\van}_0$ (which is generated by the equivalence class of $x^{\van}$). Since $\cO_{C,v,s_0}$ is Noetherian, it embeds into its $\fm$-adic completion $\hat\cO_{C,v,s_0}$. Furthermore, since $\hat\cO_{C,v,s_0}$ is regular, it is isomorphic to $K_v[\![X]\!]$, and we may choose the isomorphism so that it sends the germ of~$x^\van$ to the indeterminate~$X$. Let $T$ denote the resulting injective ring homomorphism
\[T:\cO_{C,v,s_0}\to K_v[\![X]\!]. \]
For any $f\in \cO_{C,v,s_0}$, we refer to $T(f)$ as {\bf the Taylor series of $f$ with respect to $x$}.

\subsection{G-functions}

A \defterm{G-function} is a power series $F(X) = \sum_{n \geq 0} a_nX^n$ with coefficients $a_n$ in a number field $K$ which satisfies the following conditions:
\begin{enumerate}
\item there exists $\newC{G-function-arch-base}$ such that $\abs{a_n}_v < \refC{G-function-arch-base}^n$ for all $n \geq 1$ and all $v\in\Sigma_{K,\infty}$;
\item there exists a sequence of positive integers $(d_n)$, which grows at most geometrically, such that $d_na_m$ is an algebraic integer for all $m \in \{1, \dotsc, n\}$;
\item $F$ satisfies a linear homogeneous differential equation
\[ \frac{d^\mu}{dX^\mu}F + \gamma_{\mu-1} \frac{d^{\mu-1}}{dX^{\mu-1}}F + \dotsb + \gamma_1 \frac{d}{dX}F + \gamma_0F = 0 \]
with coefficients $\gamma_i \in K(X)$.
\end{enumerate}


\subsection{G-functions associated with an abelian scheme}\label{sec:GF-ab-sch}

Let $K$, $C$, $s_0$, $x$ be as in Section \ref{sec:TS} and let $\pi \colon \cA\to C$ be an abelian scheme of relative dimension $g$, also defined over $K$. We write $H^1_{DR}(\cA/C)$ for the relative algebraic de Rham cohomology, equipped with its Gauss--Manin connection 
\[\nabla:H^1_{DR}(\cA/C)\to H^1_{DR}(\cA/C)\otimes_{\cO_C}\Omega^1_C.\]

By \cite[Prop.~6.14]{Del70}, the $\cO_C$-sheaf $H^1_{DR}(\cA/C)$ is locally-free. Suppose that it is free (which we will later achieve by replacing $C$ with a Zariski open subset containing~$s_0$). Equivalently, we can choose a basis $\{\omega_1,\ldots,\omega_{2g}\}$ of global sections of $H^1_{DR}(\cA/C)$.

\begin{lemma} \label{lem:horizontal-sections}
Let $v \in \Sigma_K$.
There exists an open neighbourhood $\Delta_v$ of~$s_0^\van$ such that $H^1_{DR}(\cA/C)^\van(\Delta_v)$ possesses a basis of horizontal sections $\{\gamma_{v,1},\ldots,\gamma_{v,2g}\}$.
\end{lemma}

\begin{proof}
For $v\in\Sigma_{K,\infty}$, the existence of such a~$\Delta_v$ is classical and follows from Cauchy's theorem---any simply connected open subset would suffice. For $v\in\Sigma_{K,f}$, the existence of such a~$\Delta_v$ is guaranteed by \cite[Thm.~IV]{Lutz}.
\end{proof}

Let $v \in \Sigma_K$.
Let $\Delta_v$ and $\{\gamma_{v,1},\ldots,\gamma_{v,2g}\}$ be as in \cref{lem:horizontal-sections}.
Let 
\[\Omega_v\in\rM_{2g}(\cO_{C^\van}(\Delta_v))\] be the matrix whose rows give the coordinates of the $\omega^{\van}_i$ in terms of the $\gamma_{v,j}$.

\begin{remark}\label{rem:periods}
In the case $v \in \Sigma_{K,\infty}$, we may choose the horizontal sections $\gamma_{v,1}, \dotsc, \gamma_{v,2g}$ so that they correspond to sections of the constant local system $R^1\pi^\van_*\Qbar$ under the comparison isomorphism between de Rham and Betti cohomology.
Then $\Omega_v$ is a matrix of complex periods of the family of abelian varieties $\cA \to C$.
\end{remark}

\begin{lemma}\label{lem:GF-ind-v}
Define
\[ Y_v:=\Omega_v\cdot\Omega_v(s^\van_0)^{-1}\in\rM_{2g}(\cO_{C^\van}(\Delta_v)) \]
and denote its entries by $Y_{v,mn}$ (where $1\leq m,n\leq 2g$).
Let $F_{v,mn}$ denote the Taylor series of $Y_{v,mn}$ with respect to $x$. Then the $F_{v,mn}$ are G-functions, belonging to $K[\![X]\!]$, and are independent of $v\in\Sigma_K$. 
\end{lemma}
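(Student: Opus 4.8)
The plan is to separate the claim into three parts: (a) the matrix entries $F_{v,mn}$ are $v$-adically the Taylor expansions of honest analytic functions, independent of the choice of $v$; (b) they are in fact power series with coefficients in $K$; and (c) they satisfy the three defining conditions of a G-function, with constants that can be taken uniform in $v$.

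For part (b)—independence of $v$ and rationality of the coefficients—I would argue that the construction of $Y_v$ is, up to $v$-adic analytification, a purely algebraic one. The key point is that although the horizontal sections $\gamma_{v,j}$ and the matrix $\Omega_v$ depend on $v$ (and on analytic choices), the \emph{ratio} $Y_v = \Omega_v\cdot\Omega_v(s_0^\van)^{-1}$ does not: if $\{\gamma'_{v,j}\}$ is another horizontal basis on a (possibly smaller) $\Delta_v$, then $\gamma'_{v,j} = \sum_k c_{jk}\gamma_{v,k}$ for a constant matrix $(c_{jk})\in\GL_{2g}(K_v)$, and this matrix cancels in the product $\Omega_v\Omega_v(s_0^\van)^{-1}$. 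Hence $Y_v$ is intrinsic: it is the unique matrix of analytic functions on a neighbourhood of $s_0^\van$ that is a fundamental solution matrix of the Gauss--Manin connection $\nabla$ (expressed in the basis $\{\omega_i\}$) \emph{normalised to equal the identity at $s_0$}. Writing the connection matrix of $\nabla$ in the basis $\{\omega_i\}$ and the local parameter $x$, one gets a first-order linear ODE system $\frac{d}{dX}Y = G(X)\,Y$ with $G\in\rM_{2g}(K(X))$, regular at $X=0$ (since the $\omega_i$ extend to a basis near $s_0$ and $x$ is a local parameter there, so $C$ has good—indeed everywhere-defined—reduction of the connection at $s_0$). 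Such a system has a unique formal solution $\widehat Y\in\rM_{2g}(K[[X]])$ with $\widehat Y(0)=I$, obtained by solving the recursion for the coefficients over $K$. Then $F_{v,mn}$ is exactly the $(m,n)$ entry of $\widehat Y$: indeed $T(Y_{v,mn})$ solves the same formal ODE with the same initial condition over $K_v$, so by uniqueness it equals the image of $\widehat Y_{mn}$ under $K[[X]]\hookrightarrow K_v[[X]]$. This simultaneously proves independence of $v$ and that the $F_{v,mn}$ lie in $K[[X]]$; call the common power series $F_{mn}$.

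For part (c)—the G-function conditions—condition (3) is essentially free: each $F_{mn}$ satisfies the scalar ODE obtained from $\frac{d}{dX}Y=G(X)Y$ (e.g. via the cyclic vector construction, or simply because the entries of a column of $Y$ span a solution space of a linear ODE of order $\le 2g$ with coefficients in $K(X)$). For conditions (1) and (2)—the archimedean size bound $|a_n|_v<c^n$ and the near-integrality of denominators with at-most-geometric growth—I would invoke the standard fact (the Chudnovsky/André theorem on G-functions coming from geometry, or André's "théorème des G-fonctions") that the solution matrix at a $K$-rational point of a connection of geometric origin (here, Gauss--Manin cohomology of an abelian scheme) normalised to the identity has entries that are G-functions. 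Concretely: the archimedean bound follows from Remark~\ref{rem:periods}, which identifies $\Omega_v$ with a period matrix and hence $Y_v$ with the local monodromy-invariant expansion whose coefficients are bounded geometrically by the standard estimates on solutions of Fuchsian ODEs with algebraic coefficients; the denominator bound follows from the fact that the connection has good reduction at all but finitely many primes (the abelian scheme $\cA\to C$ and the basis $\{\omega_i\}$ are defined over $\cO_K[1/M]$ for some $M$), so that for $v\nmid M\infty$ the reduced ODE mod $\fp_v$ still has the identity as initial data and the $p$-adic radius of convergence is bounded below, forcing $|a_n|_v\le 1$ there; at the finitely many remaining primes one absorbs the bounded denominators into the geometric sequence $(d_n)$.

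The main obstacle is the rationality/uniformity packaging in part (b)–(c): one must be careful that the \emph{same} power series (not merely $v$-adically convergent functions that happen to have a common "shape") arises for every $v$, which is precisely why the normalisation $Y_v=\Omega_v\Omega_v(s_0^\van)^{-1}$ and the uniqueness of formal solutions to the regular ODE at $s_0$ are doing the real work; and one must check that $s_0$ is genuinely an ordinary point of the Gauss--Manin connection in the chosen basis (so $G(X)$ is regular at $0$ and the recursion for the coefficients of $\widehat Y$ makes sense over $K$), which uses that $\{\omega_i\}$ was chosen to be a \emph{basis} of global sections of a \emph{free} module and that $x$ is a local parameter at $s_0$. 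Everything after that is an appeal to the now-standard G-function estimates for connections of geometric origin.
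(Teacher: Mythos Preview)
Your proposal is correct and follows essentially the same approach as the paper: both arguments identify $Y_v$ as the unique fundamental solution matrix of the Gauss--Manin system $\frac{d}{dX}Y = A(X)Y$ (with $A$ defined over $K$) normalised to $Y(0)=I_{2g}$, deduce from this uniqueness that the entries lie in $K[[X]]$ and are independent of~$v$, and then invoke Andr\'e's theorem (\cite[p.~3, Thm.~B]{And89}) on G-functions of geometric origin for the growth and denominator conditions. The paper simply cites Andr\'e's theorem as a black box, whereas you sketch its content; your additional remark that the change-of-horizontal-basis matrix cancels in $\Omega_v\Omega_v(s_0^\van)^{-1}$ is a helpful sanity check but not logically necessary once one has the ODE characterisation.
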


Note that $Y_v$ and the $F_{v,mn}$ are independent of the choice of $\gamma_{v,1},\dotsc,\gamma_{v,2g}$. We make use of this in Section \ref{proof:rel}, where we impose additional conditions on this basis and rely on the fact that this leaves the $F_{v,mn}$ unaffected.

\begin{proof}
It is straightforward to verify that the matrix $F_v:=(F_{v,mn})_{mn}$ satisfies a linear differential equation $\frac{d}{dx}(-)=A(-)$ induced from the action of $\nabla(\frac{d}{dx})$ on $H^1_{DR}(\cA/C)$, where $A\in\rM_{2g}(K((X)))$. Starting from the fact that $F_v(0)=I_{2g}$ has entries in $K$, it follows that $F_{v,nm}\in K[\![X]\!]$ for all $m,n\in\{1,\ldots,2g\}$. By \cite[p.~3, Thm.~B]{And89}, therefore, $F_{v,nm}$ is a G-function for all $m,n\in\{1,\ldots,2g\}$.
Furthermore, $F_v$ is the unique solution of $\frac{d}{dx}(-)=A(-)$ in $\rM_{2g}(K((X)))$ satisfying $F_v(0)=I_{2g}$, so it is independent of~$v$.
\end{proof}

Because the $F_{v,mn}$ in Lemma \ref{lem:GF-ind-v} are independent of~$v$, we denote them simply by~$F_{mn}$.
We refer to the $F_{mn}$ for $1\leq m,n\leq 2g$ as {\bf the G-functions associated with $\cA\to C$, $x$, $s_0$ and $\{\omega_1,\ldots,\omega_{2g}\}$}.





\subsection{Relations}

Let $F_1, \dotsc, F_n \in \Qbar[\![X]\!]$. We say that a polynomial $\tilde Q\in \Qbar[X][X_1, \dotsc, X_n]$ is a \defterm{functional relation} between $F_1, \dotsc, F_n$ if it is homogeneous in $X_1,\dotsc,X_n$ and
\[\tilde Q(X)(F_1(X), \dotsc, F_n(X)) = 0\text{ in }\Qbar[\![X]\!].\]

Let $\xi \in \Qbar$ and let $Q \in \Qbar[X_1, \dotsc, X_n]$ be a homogeneous polynomial.
Let $K$ be a number field which contains~$\xi$ and all of the coefficients of $F_1, \dotsc, F_n$ and~$Q$. Following \cite[Ch. VII, 5.1]{And89}, we make the following definitions:
\begin{enumerate}
\item $Q$ is an \defterm{$v$-adic relation} between the evaluations at $\xi$ of $F_1, \dotsc, F_n$ (for $v\in\Sigma_K$) if $\abs{\xi}_v < \min \{ 1, R(F_1^\van), \dotsc, R(F_n^\van) \}$ and
\[ Q^\van(F_1^\van(\xi^\van), \dotsc, F_n^\van(\xi^\van)) = 0. \]
\item $Q$ is a \defterm{global relation} between the evaluations at $\xi$ of $F_1, \dotsc, F_n$ if it is a $v$-adic relation between the evaluations at~$\xi$ for every place $v$ of~$K$ satisfying $\abs{\xi}_v < \min \{ 1, R(F_1^\van), \dotsc, R(F_n^\van) \}$.
\item $Q$ is a \defterm{trivial relation} between $F_1, \dotsc, F_n$ at~$\xi$ if it is the specialisation at $X=\xi$ of a functional relation $\tilde Q \in \Qbar[X][X_1, \dotsc, X_n]$ between $F_1, \dotsc, F_n$.
\end{enumerate}

\subsection{Simple neighbourhoods}\label{sec:simp-n}

Return to the situation of Section \ref{sec:TS} and let $\fC$ be a regular integral flat $\cO_K$-scheme with an isomorphism $\fC_K \to C$. Let $v\in \Sigma_{K,f}$ and let $\hat{\fC}_{\fC_{k_v}}$ denote the formal completion of~$\fC$ along the fibre $\fC_{k_v}$. Let $\hat{\fC}_{\fC_{k_v}}^\rig$ denote the associated rigid analytic space (see \cite[0.2.6]{Berthelot96}). Then there is a canonical open immersion $\hat{\fC}_{\fC_{k_v}}^\rig\to C^\van$ \cite[Prop. 0.3.5]{Berthelot96} and reduction map $\red_v:\hat{\fC}_{\fC_{k_v}}^\rig \to \fC_{k_v}$.
We say that an open subspace $U$ of $C^\van$ is \defterm{simple} (with respect to the integral model~$\fC$) if all of the following hold:
\begin{enumerate}
    \item $U \subset \hat{\fC}_{\fC_{k_v}}^\rig$;
    \item ${\red_v}|_U$ is constant;
    \item the value of $\red_v$ on~$U$ is a smooth closed point~$t\in\fC_{k_v}$.
\end{enumerate}
(In other words, $U$ is contained in the residue disc of a smooth closed point of~$\fC_{k_v}$.)

\section{Relations for elliptic schemes}\label{sec:main}

Let $\cE_1, \cE_2$ be elliptic schemes over the same curve~$C$ having isogenous fibres at some base point~$s_0$.
In this section, we construct $v$-adic relations between the G-functions associated with~$\cE_1,\cE_2$ (over the same curve~$C$) at points $s \in C(\Qbar)$ for which the fibres are again isogenous. In Sections \ref{sec:pairs} and~\ref{proof:rel}, we consider a pair of elliptic schemes, as just described, and construct relations which are potentially different at each place~$v$. In Section \ref{sec:triples}, we introduce a third elliptic scheme, in order to obtain a relation independent of~$v$, valid at all places~$v$ of good ordinary reduction of the fibres over the base point.

\subsection{Isogenous pairs}\label{sec:pairs} Let $C$ be a smooth projective geometrically irreducible algebraic curve, defined over a number field $K$, and let $\fC$ be a regular $\cO_K$-model of $C$ (see \cite[Definition 10.1.1]{Liu06} for the definition; such a model exists by \cite[Cor.~8.3.45]{Liu06}). 

\begin{definition}
    For any finite extension $L$ of $K$ and $s\in C(L)$, we refer to the unique section $\fs \colon \Spec(\cO_{L}) \to \fC_{\cO_{L}}$ whose image is the Zariski closure of~$s$ as {\bf the $\cO_{L}$-extension of $s$} (see \cite[Lemma~6.3]{LGO} for the existence of such a section). Note that, for any $v\in\Sigma_{K,f}$, the point $\fs(\fp_v)\in\fC_{k_v}$ is smooth (see \cite[Exercise 4.3.25(c)]{Liu06}).
\end{definition}

For $i\in\{1,2\}$, let $\fG_i\to\fC$ be a semiabelian scheme such that, for some non-empty Zariski open subset $C_0\subset C$, the base-change $\cE_i:=\fG_i\times_{\fC}C_0$ is an elliptic scheme over~$C_0$. Let $\fC_0\subset\fC$ denote a non-empty Zariski open subscheme such that $\fG_{i,0}:=\fG_i\times_{\fC}\fC_0$ is an elliptic scheme over $\fC$ for $i\in\{1,2\}$ (such a subscheme exists because the toric rank is upper semi-continuous---see \cite[Lemma~3.3.1.4]{Lan13}).
Suppose that $\fC_{0,K}=C_0$.


Let $s_0\in C_0(K)$ and suppose that there exists a $K$-isogeny $f_0:\cE_{1,s_0}\to\cE_{2,s_0}$ of degree $N_0:=\deg(f_0)$. We will say that $v\in\Sigma_{K,f}$ is a place of good (resp.\ bad) reduction if one (equivalently, both) of the $\cE_{i,s_0}$ have good (resp.\ bad) reduction at $v$.

Choose bases $\{\omega_i,\eta_i\}$ for $H^1_{DR}(\cE_i/C_0)$ such that
\begin{equation} \label{eqn:omega-f0}
\omega_{1,s_0}=\mu f^*_0\omega_{2,s_0} \text{ and } \eta_{1,s_0}=\mu f^*_0\eta_{2,s_0},
\end{equation}
for some $\mu\in\bQ$, where 
\[f^*_0:H^1_{DR}(\cE_{2,s_0}/K)\to H^1_{DR}(\cE_{1,s_0}/K)\]
denotes the pullback induced by $f_0$ ($\mu$ will afford us a degree of freedom which we will use in Section \ref{sec:proofs}). Let $x\in K(C_0)$ be a local parameter at $s_0$ and let $F_{imn}\in K[\![X]\!]$ denote the G-functions associated with $\cE_i\to C_0$, $x$, $s_0$ and $\{\omega_i,\eta_i\}$. Let 
\[\cG:=\{F_{imn}:1\leq i,m,n\leq 2\}.\]

For each $v \in \Sigma_{K,\infty}$, let $r_v$ and $U_v$ denote, respectively, a positive real number and an open neighbourhood of $s_0^\van$ in~$C^\van_0$ such that $x^\van$ restricts to an isomorphism of $K_v$-analytic spaces from $U_v$ to $D(0,r_v,K_v)$.
For the $v \in \Sigma_{K,f}$, let $r_v$ and~$U_v$ be the positive real numbers and simple open subspaces of $C^\van_0$ afforded to us by Proposition~\ref{rigid-coordinate-disc}. Define
\[R_v:=\min\{1,r_v,R(F^\van):F\in\cG\}\leq r_v\]
and $\Delta_v:=(x^\van|_{U_v})^{-1}(D(0,R_v,K_v))$. 


Note that we have $R_v=1$ for almost all $v\in\Sigma_{K}$. Moreover, $\Delta_v$ is simply connected for $v\in\Sigma_{K,\infty}$ and simple for $v\in\Sigma_{v,f}$.

\begin{lemma}\label{lem:equal-reduction}

    Let $s\in C(L)$ for some finite extension $L$ of $K$. Let $v\in\Sigma_{K,f}$ be a place of good reduction and let $w$ be a finite place of $L$ lying above $v$ such that $s^{\wan}\in\Delta_v$. Then, if $\fs,\fs_0:\Spec\cO_{L}\to\fC_{\cO_{L}}$ denote the $\cO_{L}$-extensions of $s$ and $s_0$, respectively, there are canonical isomoprhisms
    \[\fG_{1,\fs,k_{w}}\cong\fG_{1,\fs_0,k_{w}}\text{ and }\fG_{2,\fs,k_{w}}\cong\fG_{2,\fs_0,k_{w}}.\]  
\end{lemma}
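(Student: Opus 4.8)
The plan is to exploit the fact that $\Delta_v$ is a \emph{simple} open subspace of $C^\van$ (property (c)), so that the reduction map $\red_v$ is constant on $\Delta_v$. Since both $s^\vhatan$ and $s_0^\van$ lie in $\Delta_v$ (the latter because $s_0$ is the centre of the coordinate disc and $x^\van(s_0^\van)=0$), their reductions coincide: $\fs(\fp_{\hat v})=\fs_0(\fp_{\hat v})=:t\in\fC(k_{\hat v})$, viewing $k_v\subset k_{\hat v}$ appropriately. First I would set this up carefully, noting that the canonical extension $\fs$ of $s\in C(\hat K)$ factors through $\fC'_{\cO_{\hat K}}$ and, because $s$ lands in the open subscheme $C=\fC_K$, the special fibre point $\fs(\fp_{\hat v})$ in fact lies in $\fC_{k_{\hat v}}\subset\fC'_{k_{\hat v}}$, which is where $\fG_i$ (not merely $\fG'_i$) is an elliptic scheme. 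The key geometric input is that $\red_v^{-1}(t)$ is, by Lemma \ref{rem:unit-disc}, the formal completion $\fC_{v,\formal\dag}$ of $\fC$ along the (smooth, by the Remark after the definition of canonical extension) point $t$, i.e.\ a formal/rigid open unit disc, and both $\fs$ and $\fs_0$ correspond to $\cO_{\hat K_{\hat v}}$-points of this formal disc reducing to its centre.

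Next I would produce the isomorphisms. The semiabelian scheme $\fG'_i$ restricted to $\fC$ is the elliptic scheme $\fG_i$; pulling back along the two sections $\fs,\fs_0\colon \Spec\cO_{\hat K_{\hat v}}\to \fC_{\cO_{\hat K_{\hat v}}}$ (which agree after reduction mod $\fp_{\hat v}$) gives two elliptic curves over $\cO_{\hat K_{\hat v}}$ whose special fibres are both equal to the fibre $\fG_{i,t}$ of $\fG_i$ over $t\in\fC(k_{\hat v})$. Concretely, I would work over the complete local ring $\hat\cO_{\fC,t}\cong W[[X]]$ (with $W=\cO_{\hat K_{\hat v}}$, or its Witt vectors, matching the notation of Section \ref{sec:TS}): the pullback of $\fG_i$ to this ring is an elliptic scheme, and the two sections are the $W$-point $X=0$ (for $\fs_0$) and the $W$-point $X=x^\van(s^\vhatan)$ (for $\fs$), which lies in the maximal ideal since $|x(s)|_{\hat v}<1$. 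The base-change of $\fG_i$ from $W[[X]]$ along either of these $W$-points, followed by reduction to $k_{\hat v}$, factors through the common residue point, giving the canonical identification $\fG'_{i,\fs,k_{\hat v}}\cong \fG_{i,t}\cong\fG'_{i,\fs_0,k_{\hat v}}$ for $i\in\{1,2\}$. The word ``canonical'' here is justified because every arrow in sight is a base-change of the fixed scheme $\fG_i/\fC$ along morphisms of affine schemes that are determined by $s$, $s_0$, and the choice of local parameter $x$ already fixed in the construction.

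The main obstacle I anticipate is purely bookkeeping rather than conceptual: one must be careful about the residue field extension $k_v\hookrightarrow k_{\hat v}$ and ensure that ``the reduction of $s_0$'' is taken over $k_{\hat v}$ (not $k_v$) so that it can be compared with the reduction of $s$; this is why the statement base-changes everything to $\cO_{\hat K}$ and records both isomorphisms at $k_{\hat v}$. A second, minor point requiring attention is verifying that $s_0^\van\in\Delta_v$ — this holds because $\Delta_v=(x^\van|_{U_v})^{-1}(D(0,R^{\leq}_v,K_v))$ is by construction a disc centred at the point where $x^\van$ vanishes, namely $s_0^\van$ — and that $\Delta_v$ embeds into $\fC_{v\dag}^\rig$ compatibly with $\red_v$, which is exactly the content of ``simple''. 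Once these identifications are in place the lemma follows, and it will be used downstream to transport the isogeny $f_0$ at $s_0$ to an isogeny between the special fibres at $s$, which is the real purpose of the statement.
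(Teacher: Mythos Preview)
Your proposal is correct and follows essentially the same approach as the paper: use simplicity of $\Delta_v$ to conclude $\fs(\fp_{\hat v})=\fs_0(\fp_{\hat v})$, from which the isomorphisms of fibres are immediate. The paper's proof is much terser---once the closed points agree, the fibres $\fG'_{i,\fs,k_{\hat v}}$ and $\fG'_{i,\fs_0,k_{\hat v}}$ are literally the same scheme, so your detour through $W[[X]]$ and formal completions, while not wrong, is unnecessary.
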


\begin{proof}
    Since $\Delta_v$ is simple, we have $\red_{w}(s^\wan)=\red_{w}(s^\wan_0)$. Equivalently, $\fs(\fp_{w})=\fs_0(\fp_{w})$. The isomorphisms now follow by definition. 
\end{proof}

    We refer to the above data, namely,
    \[\{C,\fC,\fG_{i},C_0,\cE_{i},\fC_0,s_0,f_{0},\{\omega_{i},\eta_{i}\},x,F_{imn},R_v,\Delta_v:1\leq i,m,n\leq 2, v\in\Sigma_K\}\]
    as an {\bf isogeny datum}. The main result of this section is as follows.

\begin{proposition}\label{prop:rel} Consider an isogeny datum as above.

    Let $L$ be a finite extension of $K$ and suppose $s\in C_0(L)$ is such that there exists an $L$-isogeny $f:\cE_{1,s}\to\cE_{2,s}$.
    
    Let $v\in\Sigma_{K}$ and choose $w\in\Sigma_{L}$ lying over $v$. 
    Suppose that $s^\wan \in \Delta_v$.
    
    There exists a $w$-adic relation $Q_{w}$ of degree at most $2$ between the evaluations of the elements of $\cG$ at $x(s)$ which is not contained in the ideal $I$ of $\Qbar[X_{i mn}:1\leq i,m,n\leq 2]$ generated by the elements 
    \[\det((X_{1 mn})_{mn})-\det((X_{2 mn})_{mn}).\] 
    \end{proposition}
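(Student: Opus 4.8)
The plan is to convert the two isogenies $f$ (at $s$) and $f_0$ (at $s_0$) into a single honest \emph{endomorphism} of a cohomology group at $s_0$ whose matrix is, up to algebraic factors, a word in
$Y^{(1)}:=\bigl(F_{1mn}^\vhatan(x(s)^\vhatan)\bigr)_{m,n}$ and $Y^{(2)}:=\bigl(F_{2mn}^\vhatan(x(s)^\vhatan)\bigr)_{m,n}$, and then to read off the relation from its trace. (Thus the eight numbers to be related are the entries of $Y^{(1)}$ and $Y^{(2)}$.) First, the algebraic inputs. The pullback $f^{\ast}$ on the de Rham cohomology of the fibres at $s$ is represented, in the bases of \eqref{eqn:omega-f0}, by a matrix $M\in\rM_2(\hat K)$, invertible because $f$ is an isogeny. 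Since those bases were chosen so that $f_0^{\ast}$ is represented by the identity matrix, the relation $\hat f_0\circ f_0=[N_0]$ (with $\hat f_0$ the dual isogeny and $N_0=\deg f_0$) together with the fact that $[N_0]$ acts by multiplication by $N_0$ on $H^1_{DR}$ forces $\hat f_0^{\ast}\colon H^1_{DR}(\cE_{1,s_0})\to H^1_{DR}(\cE_{2,s_0})$ to be represented by the scalar matrix $N_0 I$. Second, by the very definition $Y_v=\Omega_v\,\Omega_v(s_0)^{-1}$, parallel transport along $\Delta_v$ is an isomorphism $P_i\colon H^1_{DR}(\cE_{i,s})^\vhatan\xrightarrow{\ \sim\ }H^1_{DR}(\cE_{i,s_0})^\vhatan$ whose matrix in the bases $\{\omega_i,\eta_i\}$ is $Y^{(i)}$.

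Combining these, we form
\[
B\;:=\;\hat f_0^{\ast}\circ P_1\circ f^{\ast}\circ P_2^{-1}\ \in\ \mathrm{End}\!\left(H^1_{DR}(\cE_{2,s_0})^\vhatan\right),
\]
whose matrix in the bases $\{\omega_i,\eta_i\}$ is $N_0\,(Y^{(2)})^{-1}\,M\,Y^{(1)}$ (the exact placement of factors and transposes being immaterial for what follows). The crucial---and most delicate---step is that $B$ is \emph{geometric}, so that $\tr(B)$ is an algebraic number. If $\hat v$ lies over a place of good reduction, Lemma~\ref{lem:equal-reduction} (applicable since $\Delta_v$ is simple, by~(c)) identifies the reductions of $\cE_{i,s}$ and $\cE_{i,s_0}$ with one elliptic curve $\bar\cE_i$ over $k_{\hat v}$, and the comparison isomorphisms between the de Rham cohomology of $\cE_{i,s}$, resp.\ $\cE_{i,s_0}$, and the crystalline cohomology of $\bar\cE_i$ carry $P_i$ to the identity and $f^{\ast},\hat f_0^{\ast}$ to the realisations of the reductions $\bar f$, $\overline{\hat f_0}$; hence $B$ is the de Rham realisation of the endomorphism $\bar f\circ\overline{\hat f_0}$ of $\bar\cE_2$, of degree $NN_0$ where $N=\deg f$, so that $\tr(B)$ is the reduced trace of $\bar f\circ\overline{\hat f_0}$, a rational integer. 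If $\hat v$ is archimedean, one argues identically with the Betti comparison over the simply connected $\Delta_v$ (by~(a)): the transport is again trivialised and $B$ becomes an element of $\rM_2(\bQ)$, so $\tr(B)\in\bQ$. Finally, if $\hat v$ lies over a place of bad reduction, the analogous statement is supplied by the construction of Section~\ref{sec:Case2}, into which $\Delta_v$ has been placed in~(b).

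Set $\tau:=\tr(B)\in\Qbar$ and expand $\tr(B)=N_0\,\tr\!\bigl(M\,Y^{(1)}\,(Y^{(2)})^{-1}\bigr)$. Clearing the single denominator via $(Y^{(2)})^{-1}=\det(Y^{(2)})^{-1}(Y^{(2)})^{\mathrm{adj}}$ and the linearity of $(-)^{\mathrm{adj}}$ on $2\times2$ matrices, we obtain that
\[
Q_{\hat v}(X_{imn})\;:=\;N_0\,\tr\!\bigl(M\,(X_{1mn})\,(X_{2mn})^{\mathrm{adj}}\bigr)\;-\;\tau\,\det\!\bigl((X_{2mn})\bigr)
\]
is a homogeneous polynomial over $\Qbar$ of degree $2$ which vanishes at the evaluations at $x(s)$ of the elements of $\cG$; the convergence hypothesis $\abs{x(s)}_{\hat v}<\min\{1,R(F^\van):F\in\cG\}$ holds because $s^\vhatan\in\Delta_v$, so $Q_{\hat v}$ is a $\hat v$-adic relation between these evaluations.

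It remains to check $Q_{\hat v}\notin I$. Bigrade $\Qbar[X_{imn}]$ by (degree in $(X_{1mn})$, degree in $(X_{2mn})$). The generator $\det((X_{1mn}))-\det((X_{2mn}))$ of $I$ has bidegrees $(2,0)$ and $(0,2)$ only, and every element of $I$ of total degree at most $2$ is a $\Qbar$-scalar multiple of this generator, hence has vanishing $(1,1)$-component. But the $(1,1)$-component of $Q_{\hat v}$ is the bilinear form $N_0\,\tr(M\,(X_{1mn})\,(X_{2mn})^{\mathrm{adj}})$, which is \emph{nonzero}: for any invertible value $Y$ of $(X_{2mn})$ the linear form $(X_{1mn})\mapsto\tr(M\,(X_{1mn})\,Y^{\mathrm{adj}})=\tr(Y^{\mathrm{adj}}\,M\,(X_{1mn}))$ is nonzero because $Y^{\mathrm{adj}}M\neq0$. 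Therefore $Q_{\hat v}\notin I$, completing the proof. In summary, once the geometricity of $B$ is granted the argument is routine linear algebra of $2\times2$ matrices; the real work, and the main obstacle, is that geometricity---in the $\hat v$-adic case the compatibility of the parallel transport packaged by the G-functions $F_{imn}$ with the functoriality of crystalline cohomology under reduction of isogenies---together with the need to treat the archimedean, good-reduction and bad-reduction places on the same footing.
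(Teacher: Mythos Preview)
Your argument at archimedean places and places of good reduction is essentially the paper's, repackaged: both construct an endomorphism by composing parallel transport with $f^{\ast}$ and (the dual of) $f_0^{\ast}$, then use the Betti or crystalline comparison to see that its trace is rational. The paper's Lemma~\ref{lem:trace} carries this out on the ``topological'' side via the matrix $M_v$ and an auxiliary endomorphism $\phi$ built from isogenies $\alpha_i$ between the reductions; your $B$ and the paper's $\phi$ differ only by which side of the comparison one works on and by the scalar $N_0$ (and since Lemma~\ref{lem:equal-reduction} already identifies the reductions canonically, one may take the paper's $d_i=1$, whereupon the two coincide). Your bidegree verification that $Q_{\hat v}\notin I$ is likewise the paper's degree argument.

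The gap is at places of bad reduction. You assert that ``the analogous statement is supplied by the construction of Section~\ref{sec:Case2}'', meaning that $\tr(B)\in\Qbar$; but that section does \emph{not} establish this, and your ``$B$ is geometric'' heuristic breaks down here: the special fibres are $\bG_m$, whose $H^1$ is one-dimensional, so there is no elliptic curve whose endomorphism $B$ could realise. The paper takes a genuinely different route at these places. It uses the Tate uniformisation $\phi_{v,i}\colon\bG_m^\van\to\cE_{v,i}$ to single out the distinguished line $dz/z$, obtains the vector identity
\[
m_0\,A^T\Omega_{v,1}(s^\van)(dz/z,0)^T=m\,\Omega_{v,2}(s^\van)(dz/z,0)^T
\]
with integers $m,m_0$, and then eliminates the unknown period ratio $\lambda_{v,0}=(\Omega_{v,0})_{11}/(\Omega_{v,0})_{21}$ between the two resulting scalar equations, producing either a linear relation or a degree-$2$ relation of a different shape from your trace relation. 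In the notation of Lemma~\ref{lem:trace}, the information extracted is only $p_v=m/m_0$ and $r_v=0$; nothing is said about the other diagonal entry $s_v$, hence nothing about $\tr(M_v)=p_v+s_v$. So you cannot cite Section~\ref{sec:Case2} for $\tau\in\Qbar$; you must either supply an independent argument that $s_v$ is algebraic, or, as the paper does, accept a differently shaped $Q_{\hat v}$ at bad places and check separately that it lies outside~$I$.
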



\subsection{Proof of Proposition \ref{prop:rel}}\label{proof:rel}


If necessary, replace $K$ with $L$ and $v$ with $w$ and, for uniformity of notation, define the field
\begin{align*}
    F_v := 
     \begin{cases}
       \bQ &\quad\text{if }v\in\Sigma_{K,\infty}\\
       K_v &\quad\text{if }v\in\Sigma_{K,f}.
     \end{cases}
\end{align*}
(This is the field of coefficients for the ``$v$-adic cohomology'' theory, namely, Betti cohomology when $v\in \Sigma_{K,\infty}$ or crystalline cohomology when $v \in \Sigma_{K,f}$.)
We will split the proof of Proposition \ref{proof:rel} into two cases.

\subsubsection{Case 1}\label{sec:Case1}
Suppose $v\in\Sigma_{K,\infty}$ or that $v\in\Sigma_{K,f}$ and the $\cE_{i,s_0}$ have good reduction at~$v$ (which is to say that the  $\cE_{i,s_0,k_v}:=\fG_{i,\fs_0,k_v}$ are elliptic curves). 

\quad

\paragraph{\bf Setup}
Define the $F_v$-vector space
\begin{align*}
    H^1_v(\cE_{i,s}) := 
     \begin{cases}
       H^1(\cE_{i,s}^\van,F_v)&\quad\text{if }v\in\Sigma_{K,\infty}\\
       H^1_{cris}(\cE_{i,s,k_v}/W(k_v))\otimes_{W(k_v)} F_v &\quad\text{if }v\in\Sigma_{K,f},
     \end{cases}
\end{align*}
where the former denotes singular cohomology with $\bQ$-coefficients, and the latter denotes crystalline cohomology with $W(k_v)$-coefficients \cite{Berthelot1974} tensored with $F_v$. Let $H^1_{DR}(\cE_{i,s}/K_v)$ denote the de Rham cohomology of $\cE_{i,s,K_v}$. Note that $H^1_{DR}(\cE_{i,s}/K_v)\cong H^1_{DR}(\cE_{i,s}^\van)$ (\cite[Cor. 31.1.2, 32.2.2]{ABC}). Define the analogous objects for $s$ replaced with $s_0$.
For $i\in\{1,2\}$, let $\pi_i:\cE_i\to C$ denote the structure map.

\begin{lemma}\label{lem:commutes}
For $t\in\{s,s_0\}$, there exist canonical (comparison) isomorphisms
    \[\sigma_i:H^1_{DR}(\cE_{i,t}/K_v)\to H^1_v(\cE_{i,t})\otimes_{F_v} K_v\]
belonging to commutative diagrams
\begin{center}
\begin{tikzcd}
{H^1_{DR}(\cE_{i,s_0}/K_v)} \arrow[d, "\sigma_i" ] \arrow[r, "\epsilon_i"] & {H^1_{DR}(\cE_{i,s}/K_v)}\arrow[d, "\sigma_i"] \\
 {H^1_v(\cE_{i,s_0})\otimes_{F_v} K_v} \arrow[r, "\iota_i"] & {H^1_v(\cE_{i,s})\otimes_{F_v} K_v},
\end{tikzcd}
\end{center}
where $\epsilon_i$ is the isomorphism induced by the Gauss--Manin connection (``parallel transport'') and $\iota_i$ is induced

(if $v\in\Sigma_{K,\infty}$) by the canonical isomorphism $R^1(\pi^\van_{i})_*\bQ|_{s_0}\cong R^1(\pi^\van_{i})_*\bQ|_{s}$ of fibres of the constant sheaf $R^1(\pi^\van_{i})_*\bQ|_{\Delta_v}$;

(if $v\in\Sigma_{K,f}$) by the pullback $\alpha^*_i$ 
       associated with an isogeny $\alpha_i:\cE_{i,s,k_v}\to\cE_{i,s_0,k_v}$ whose degree we denote $d_i$.  

\end{lemma}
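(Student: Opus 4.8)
The plan is to construct $\sigma_i$ in each case from the appropriate comparison isomorphism and then to deduce the commutativity of the square from the compatibility of that comparison with the Gauss--Manin connection, treating $v\in\Sigma_{K,\infty}$ and $v\in\Sigma_{K,f}$ separately.

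Suppose first $v\in\Sigma_{K,\infty}$. For $t\in\{s,s_0\}$ I take $\sigma_i$ to be the algebraic de Rham comparison isomorphism $H^1_{DR}(\cE_{i,t}/\bC)\xrightarrow{\ \sim\ }H^1(\cE_{i,t}^\van,\bQ)\otimes_\bQ\bC$; it is canonical and compatible with the identification $H^1_{DR}(\cE_{i,t}/K_v)\cong H^1_{DR}(\cE_{i,t}^\van)$ used in Case~1, by \cite[Cor.~31.1.2, 32.2.2]{ABC} and the holomorphic Poincar\'e lemma. The commutativity is then the standard fact that, under the comparison isomorphism $H^1_{DR}(\cE_i/C)^\van\cong R^1(\pi_i^\van)_*\bQ\otimes_\bQ\cO_{C^\van}$, the Gauss--Manin connection corresponds to $\id\otimes d$, so that its horizontal sections over the simply connected region $\Delta_v$ are exactly the sections of the constant local system $R^1(\pi_i^\van)_*\bQ|_{\Delta_v}$ (tensored with $\bC$). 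Concretely, I would fix a flat basis of that local system on $\Delta_v$; it is simultaneously a $\nabla$-horizontal basis of $H^1_{DR}(\cE_i/C)^\van|_{\Delta_v}$, and in the induced trivialisations both $\epsilon_i$ and $\iota_i$ become the identity, so the square commutes.

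Now suppose $v\in\Sigma_{K,f}$. By Lemma~\ref{lem:equal-reduction} (which applies since $\Delta_v$ is simple) the special fibres $\fG'_{i,\fs,k_v}$ and $\fG'_{i,\fs_0,k_v}$ are canonically isomorphic; write $\cE_0$ for this common $k_v$-elliptic curve, which is the fibre of the elliptic scheme $\fG_i\to\fC$ over the common reduction point of $s$ and $s_0$. I take $\sigma_i$ to be the crystalline--de Rham comparison isomorphism of Berthelot--Ogus: since $\fG'_{i,\fs}$ is a smooth proper $\cO_{K_v}$-model of $\cE_{i,s,K_v}$ with special fibre $\cE_0$, one has a canonical chain of isomorphisms $H^1_{DR}(\cE_{i,s}/K_v)\cong H^1_{DR}(\fG'_{i,\fs}/\cO_{K_v})\otimes_{\cO_{K_v}}K_v\cong H^1_{cris}(\cE_0/W(k_v))\otimes_{W(k_v)}K_v$, compatible with $H^1_{DR}(\cE_{i,s}/K_v)\cong H^1_{DR}(\cE_{i,s}^\van)$, and similarly with $s$ replaced by $s_0$. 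I take $\alpha_i\colon\cE_{i,s,k_v}\to\cE_{i,s_0,k_v}$ to be the canonical isomorphism of Lemma~\ref{lem:equal-reduction} (so $d_i=1$; the statement allows a general isogeny for use elsewhere), so that $\iota_i=\alpha^*_i$ is the functoriality of crystalline cohomology for this identification. The commutativity rests on one substantive point: over the residue disc $\Delta_v$, which is the tube of the reduction point in $\fC_{k_v}$, the module with connection $(H^1_{DR}(\cE_i/C)^\van,\nabla)|_{\Delta_v}$ is the restriction to $\Delta_v$ of the convergent isocrystal on $\fC_{k_v}$ attached to the first relative crystalline cohomology of $\fG_i\to\fC$, and, that isocrystal being pulled back from the point along $\Delta_v\to\Spec k_v$, this restriction is $H^1_{cris}(\cE_0/W(k_v))\otimes_{K_v}\cO_{\Delta_v}$ with connection $\id\otimes d$, the identification specialising at each point of $\Delta_v$ to $\sigma_i$ (cf.\ \cite[\S9.3]{Ked22} and the base-change and functoriality formalism of Berthelot--Ogus). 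Granting this, $\epsilon_i$ is the identity of $H^1_{cris}(\cE_0/W(k_v))\otimes K_v$ in these coordinates, which via $\sigma_i$ at $s_0$ and at $s$ is precisely $\iota_i=\alpha^*_i$.

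The archimedean case is essentially formal once the dictionary between the Gauss--Manin connection and the Betti local system is invoked. The crux --- and the step I expect to require genuine input rather than bookkeeping --- is the finite case: the assertion that Gauss--Manin parallel transport along $\Delta_v$ is compatible with the crystalline--de Rham comparison at its two endpoints, equivalently that the de Rham cohomology of $\cE_i\to C$ restricted to the tube of the reduction point is the constant isocrystal of $\cE_0$. Making this precise requires keeping straight which cohomology theory is being used over the tube, over the special fibre, and over the point, and appealing carefully to the base-change and functoriality properties of convergent $F$-isocrystals. I would isolate it as a sub-lemma, reducing it via the crystalline interpretation to the statement that a convergent isocrystal on $\Spec k_v$, pulled back to the rigid tube $\Delta_v$, is the constant module equipped with the trivial connection.
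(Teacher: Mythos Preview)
Your proposal is correct and follows essentially the same route as the paper. In both cases the archimedean square is handled by the standard compatibility of the de Rham--Betti comparison with the Gauss--Manin connection (the paper cites \cite[Prop.~4.1.2]{Katz-p-curvature} for this), and the non-archimedean square is handled by the Berthelot--Ogus comparison for $\sigma_i$ together with the convergent-isocrystal interpretation of parallel transport over the residue disc; the paper packages the latter step by invoking \cite[Rem.~5.14.3]{Ogus84} (after noting that $\cE_{i,K_v}$ has good reduction over $\fC_{\cO_{K_v}}$ in Ogus's sense) together with Lemma~\ref{lem:equal-reduction}, which is exactly the ``sub-lemma'' you propose to isolate, so you need not prove it from scratch.
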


\begin{proof}
    If $v\in\Sigma_{K,\infty}$, the maps $\cE^\van_i|_{\Delta_v}\to\Delta_v$ are smooth and proper, and so the claims follow from \cite[Prop.~4.1.2]{Katz-p-curvature}.
    
    If $v\in\Sigma_{K,f}$, the $\sigma_i$ are afforded to us by \cite[Cor.~2.5]{BO1983}. Furthermore, in the terminology of \cite[\S5]{Ogus84}, $\cE_{i,K_v}$ has {\it good reduction over $\fC_{\cO_{K_v}}$}. Since $\Delta_v$ is simple, the $\cO_K$-extensions of $s$ and~$s_0$ have the same reduction at~$v$.   
    Therefore, the commutative diagrams in this case are afforded to us by \cite[Rem.~5.14.3]{Ogus84}.
\end{proof}

Let $\{\gamma_{v,2,s_0},\delta_{v,2,s_0}\}$ denote
a $K_v$-basis for $H^1_{DR}(\cE_{2,s_0}/K_v)$
whose elements are contained in the $F_v$-subspace $\sigma^{-1}_2(H^1_v(\cE_{2,s_0}))$,
and let
\begin{equation} \label{eqn:gamma-f0}
\gamma_{v,1,s_0}:=f^{*}_0\gamma_{v,2,s_0},\ \delta_{v,1,s_0}:=f^{*}_0\delta_{v,2,s_0}.
\end{equation}
Since $f_0$ is a $K$-isogeny, $\gamma_{v,1,s_0}$ and $\delta_{v,2,s_0}$ form a $K_v$-basis for $H^1_{DR}(\cE_{1,s_0}/K_v)$.

We claim there is a commutative diagram
\begin{center}
\begin{tikzcd}
{H^1_{DR}(\cE_{2,s_0}/K_v)} \arrow[d, "\sigma_2" ] \arrow[r, "f^*_0"] & {H^1_{DR}(\cE_{1,s_0}/K_v)}\arrow[d, "\sigma_1"] \\
 {H^1_v(\cE_{2,s_0})\otimes_{F_v} K_v} \arrow[r, "f^*_0"] & {H^1_v(\cE_{1,s_0})\otimes_{F_v} K_v}.
\end{tikzcd}
\end{center}
From this, we conclude $\gamma_{v,1,s_0},\delta_{v,1,s_0}\in\sigma^{-1}_1(H^1_v(\cE_{1,s_0}))$. For $v\in\Sigma_{K,\infty}$, the existence of the diagram is standard. For $v\in\Sigma_{K,f}$, we consider $\mathfrak{t}\in\{\fs_0,\fs\}$ and write $t:=\mathfrak{t}_K$. Then, the semiabelian scheme $\fG_{i,\mathfrak{t}}$ is the connected N\'eron model of its generic fibre $\cE_{i,t}$ and so, by the N\'eron mapping property, we obtain an isogeny
\[f_{0,k_v}:\cE_{1,s_0,k_v}\to \cE_{2,s_0,k_v}\]
(and similarly $f_{k_v}:\cE_{1,s,k_v}\to\cE_{2,s,k_v}$). This induces the morphism along the bottom of the diagram. The fact that the diagram commutes in this case is \cite[(2.4.5)]{BO1983}.

The parallel transport isomorphisms $\epsilon_i$ allow us to extend the $\{\gamma_{v,i,s_0},\delta_{v,i,s_0}\}$ to horizontal bases $\{\gamma_{v,i},\delta_{v,i}\}$ for $H^1_{DR}(\cE_i/C_0)^\van(\Delta_v)$ and we let $\Omega_{v,i}\in\rM_2(\cO_{\Delta_v})$ be the matrix whose rows give the coordinates of $\{\omega^\van_i,\eta^\van_i\}$ in terms of $\{\gamma_{v,i},\delta_{v,i}\}$. We define $\theta_{v,i}:=\Omega_{v,i}(s^\van_0)$ and $Y_{v,i}:=\Omega_{v,i}\cdot\theta_{v,i}^{-1}$. By construction, the Taylor series of the entries of the $Y_{v,i}$ with respect to $x$, are precisely the $F_{imn}$ invoked in Section \ref{sec:pairs}. By \eqref{eqn:omega-f0} and~\eqref{eqn:gamma-f0}, we have
\begin{equation} \label{eqn:theta-12}
\theta_{v,1}=\mu\theta_{v,2}.
\end{equation} 

\begin{remark}\label{rem:G-fn}
The argument above establishes that the $F_{imn}$ have $v$-adic radius of convergence at least~$1$ at almost all $v\in\Sigma_K$.
By Lemma \ref{lem:horizontal-sections}, they have positive $v$-adic radius of convergence at the finitely many remaining $v\in\Sigma_K$.
This may look similar to the proof of \cite[Thm.~B]{And89} in \cite[V, App.]{And89}, but this is not sufficient to establish that the $F_{imn}$ are G-functions.
The proof in \cite[V, App.]{And89} uses the stronger fact that the $v$-adic \emph{generic} radius of convergence of the differential equation satisfied by the $F_{imn}$ is~$1$ for almost all~$v\in\Sigma_K$.
\end{remark}

\paragraph{\bf Relations}

The set-up above did not use the existence of an isogeny $f \colon \cE_{1,s} \to \cE_{2,s}$.
Now we use this isogeny to construct relations between evaluations of elements of~$\cG$.

To that end, write 
\[f^*\omega_{2,s}=a\omega_{1,s}+c\eta_{1,s}\text{ and } f^*\eta_{2,s}=b\omega_{1,s}+d\eta_{1,s}\] for some $a,b,c,d\in K$, and
\[f^{\van *}\gamma_{v,2,s}=p_v\gamma_{v,1,s}+q_v\delta_{v,1,s}\text{ and } f^{\van *}\delta_{v,2,s}=r_v\gamma_{v,1,s}+s_v\delta_{v,1,s}\] 
for some $p_v,q_v,r_v,s_v\in F_v$.
We consider these coefficients as forming matrices
\[A:=\fullmatrix{a}{b}{c}{d}\text{ and }M_v:=\fullmatrix{p_v}{r_v}{q_v}{s_v}.\]

\smallskip

\begin{lemma}\label{lem:trace}
  We have $\tr(M_v)\in\mathbb{Q}$. 
\end{lemma}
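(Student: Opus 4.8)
We need to show $\tr(M_v) = p_v + s_v \in \bQ$, where $M_v$ records the matrix of $f^{\van*}$ on $v$-adic cohomology of the fibres at $s$, with respect to the bases $\{\gamma_{v,i,s}, \delta_{v,i,s}\}$. The key point is that these bases are obtained by parallel transport (via the $\iota_i$, equivalently the $\epsilon_i$) from the bases at $s_0$, which were chosen to lie in the $F_v$-lattice $H^1_v(\cE_{i,s_0})$. Since parallel transport in the $v$-adic theory is defined over $F_v$ (it comes from the constant sheaf $R^1\pi^\van_*\bQ$ when $v$ is archimedean, and from an isogeny $\alpha_i$ over $k_v$ when $v$ is finite, by Lemma~\ref{lem:commutes}), the transported bases $\{\gamma_{v,i,s},\delta_{v,i,s}\}$ also lie in $H^1_v(\cE_{i,s})$ — more precisely, they form $F_v$-bases of $H^1_v(\cE_{i,s})$, up to the isogeny factors $d_i$ which are rational integers and hence harmless for the trace question once we check $f^{\van*}$ itself is defined over $F_v$.

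**The main step.** The crux is that $f^{\van*} \colon H^1_v(\cE_{2,s}) \to H^1_v(\cE_{1,s})$ is a morphism of $F_v$-vector spaces: for $v$ archimedean this is the standard fact that an isogeny induces a map on singular cohomology with $\bQ$-coefficients; for $v$ finite, the isogeny $f \colon \cE_{1,s} \to \cE_{2,s}$ over $\hat K$ (after base change, over $K$) extends by the Néron mapping property to $f_{k_v} \colon \cE_{1,s,k_v} \to \cE_{2,s,k_v}$ (exactly as was done for $f_0$ in the body of the proof), and this induces $f_{k_v}^*$ on $H^1_{cris}$, which is a $W(k_v)$-linear — hence $F_v$-linear after tensoring — map compatible with $\sigma_i$ by \cite[(2.4.5)]{BO1983}. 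Therefore $M_v$, being the matrix of an $F_v$-linear map between $F_v$-vector spaces with respect to $F_v$-bases, has entries in $F_v$, so $\tr(M_v) \in F_v$.

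**Pinning down rationality.** This alone only gives $\tr(M_v) \in F_v$, which for $v$ finite is $K_v$, not $\bQ$. To get $\bQ$, I would invoke the comparison with de Rham cohomology together with independence of $v$. The matrix $M_v$ computes the action of $f^{\van*}$ on $H^1_v$ in the parallel-transported basis; via the comparison isomorphism $\sigma_i$ and the commutative diagram of Lemma~\ref{lem:commutes}, the \emph{same} matrix $M_v$ (its entries are scalars, unchanged under $\otimes_{F_v} K_v$) describes the action of $f^*$ on de Rham cohomology $H^1_{DR}(\cE_{i,s}/K_v)$, transported back to $s_0$, expressed in the de Rham bases $\{\gamma_{v,i,s_0},\delta_{v,i,s_0}\}$ — but those de Rham bases are obtained from $\{\omega_i,\eta_i\}$ by the change-of-basis matrix $\Omega_{v,0}$, which by Lemma~\ref{lem:GF-ind-v} and the construction is the \emph{same} rational matrix for every $v$ (the $F_{imn}(0)$ and the isogeny data $f_0^*$ are all defined over $K$). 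Consequently $M_v = \Omega_{v,0} \, \tilde A \, \Omega_{v,0}^{-1}$ for a matrix $\tilde A$ recording $f^*$ on de Rham cohomology at $s$ pushed to $s_0$ along the (algebraic, $K$-rational) Gauss–Manin connection — so $\tr(M_v) = \tr(\tilde A) \in K$, independent of $v$. Pairing "$\tr(M_v) \in F_v$ for all $v$" with "$\tr(M_v)$ independent of $v$ and lying in $K$" forces $\tr(M_v) \in \bigcap_v F_v = \bQ$ (it lies in $\bQ$ because for an archimedean $v$ we get $\tr(M_v) \in \bQ$ directly from the singular-cohomology argument).

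**Anticipated obstacle.** The delicate point is bookkeeping the isogeny factors $d_i$ from Lemma~\ref{lem:commutes} in the $v$-finite case: the $\iota_i$ are induced by $\alpha_i^*$ of degree $d_i$, so $\{\gamma_{v,i,s},\delta_{v,i,s}\}$ is a basis of $H^1_v(\cE_{i,s})$ only up to these integer scalings, and one must check the scalings cancel in the trace (they do, because $d_1$ appears on the source side and $d_2$ on the target side of $f^{\van*}$, and on the nose one is comparing the composite $\alpha_2 \circ f \circ \hat{\alpha_1}$ or similar, whose effect is multiplication by a rational number times $f^*$) — alternatively, and more cleanly, one sidesteps this entirely by working directly with the de Rham / Gauss–Manin description as in the last paragraph, where no integral lattice is needed and the $K$-rationality of $\tr(\tilde A)$ is manifest, then reintroducing the $v$-adic comparison only to conclude $\tr \in \bQ$. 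I would write the proof along that cleaner route.
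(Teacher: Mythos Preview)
Your archimedean argument is correct and matches the paper's. The non-archimedean argument, however, has a genuine gap.

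The crux of your ``pinning down rationality'' step is the claim that the matrix $\tilde A$ of $\epsilon_1^{-1} \circ f^* \circ \epsilon_2$ in the bases $\{\omega_{i,s_0},\eta_{i,s_0}\}$ lies in $\rM_2(K)$ and is independent of~$v$. This is false. The Gauss--Manin connection $\nabla$ is indeed algebraic over~$K$, but the parallel-transport isomorphisms $\epsilon_i$ are not: they are obtained by solving $\nabla = 0$ \emph{analytically} on the disc~$\Delta_v$, and the resulting change-of-basis matrix is precisely $F_{i,s} = (F_{imn}^\van(x^\van(s^\van)))_{mn}$, the $v$-adic evaluation of a G-function at~$x(s)$. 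Such evaluations are transcendental over~$K$ in general and genuinely depend on~$v$ (compare the $p$-adic and archimedean values of $\log(1+\xi)$ for $\xi\in K$). Consequently $\tilde A = (F_{2,s})^{-1} A^T F_{1,s}$ varies with~$v$, and there is no common element of~$K$ to which the intersection $\bigcap_v F_v$ can be applied. (Your subsidiary claim that $\Omega_{v,0}$ is ``the same rational matrix for every~$v$'' is also false---Lemma~\ref{lem:GF-ind-v} asserts only that the \emph{normalised} matrix $Y_v = \Omega_v \Omega_{v,0}^{-1}$ has $v$-independent Taylor series---though that particular slip is harmless for the trace by conjugation invariance.)

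The paper's approach for finite~$v$ is entirely different and makes no appeal to $v$-independence. One composes $f_{k_v}$ with the reductions $f_{0,k_v}^\vee$, $\alpha_1^\vee$, $\alpha_2$ to obtain an actual endomorphism $\phi$ of the single elliptic curve $\cE_{2,s_0,k_v}$ over the finite field~$k_v$. Chasing the large commutative diagram built from Lemma~\ref{lem:commutes} shows that $\phi$ acts on $H^1_v(\cE_{2,s_0})$ by the matrix $d_1 N_0 M_v$. Since crystalline cohomology is canonically the Dieudonn\'e module of the $p$-divisible group, and the trace of an endomorphism of an elliptic curve on its Dieudonn\'e module lies in~$\bZ$ (Demazure, \emph{Lectures on $p$-divisible groups}, V.2), one gets $\tr(d_1 N_0 M_v) \in \bZ$, hence $\tr(M_v) \in \bQ$. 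This is exactly the ``bookkeeping with the $d_i$'' that you flagged as an obstacle and then tried to sidestep; in fact it is the heart of the argument.
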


\begin{proof}
Again, we have a commutative diagram
\begin{center}
\begin{tikzcd}
{H^1_{DR}(\cE_{2,s}/K_v)} \arrow[d, "\sigma_2" ] \arrow[r, "f^*"] & {H^1_{DR}(\cE_{1,s}/K_v)}\arrow[d, "\sigma_1"] \\
 {H^1_v(\cE_{2,s})\otimes_{F_v} K_v} \arrow[r, "f^*"] & {H^1_v(\cE_{1,s})\otimes_{F_v} K_v}.
\end{tikzcd}
\end{center}
If $v\in\Sigma_{K,\infty}$, the bottom arrow is the extension to $K_v$ of an isomorphism $H^1_v(\cE_{2,s})\to H^1_v(\cE_{1,s})$ of $\bQ$-vector spaces. By Lemma \ref{lem:commutes}, the sets \[\{\sigma_2(\gamma_{v,2,s}),\sigma_2(\delta_{v,2,s)}\}\text{ and }\{\sigma_1(\gamma_{v,1,s}),\sigma_1(\delta_{v,2,s})\}\] are bases $B_2$ and $B_1$ for $H^1_v(\cE_{2,s})$ and $H^1_v(\cE_{1,s})$, respectively. The matrix representing the lower $f^*$ with respect to $B_2$ and $B_1$ is $M_v$, and so $M_v\in\rM_2(\bQ)$. In particular, $\tr(M_v)\in\bQ$, as claimed.

For $v\in\Sigma_{K,f}$, we write $f^\vee_{0,k_v}:\cE_{2,s_0,k_v}\to\cE_{1,s_0,k_v}$ for the dual isogeny to $f_{0,k_v}$. Then, using the isogenies $\alpha_1$, $\alpha_2$ from \cref{lem:commutes}, the composition
\begin{center}
    \begin{tikzcd}
{\cE_{2,s_0,k_v}} \arrow[r, "f^\vee_{0,k_v}"] & {\cE_{1,s_0,k_v}} \arrow[r, "\alpha_1^\vee"] & {\cE_{1,s,k_v}} \arrow[r, "f_{k_v}"] & {\cE_{2,s,k_v}} \arrow[r, "\alpha_2"] & {\cE_{2,s_0,k_v}}
\end{tikzcd}
\end{center}
yields an endomorphism of $\cE_{2,s_0,k_v}$ and hence an endomorphism of $H^1_v(\cE_{2,s_0})$, which we denote $\phi$. By Lemma \ref{lem:commutes} and the naturality of the~$\sigma_i$, we obtain the commutative diagram
\begin{center}\begin{tikzcd}
    H^1_{DR}(\cE_{2,s_0}) \ar[d, "\sigma_2"] \ar[r, "\epsilon_2"]
  & H^1_{DR}(\cE_{2,s})   \ar[d, "\sigma_2"] \ar[r, "f^*"]
  & H^1_{DR}(\cE_{1,s})   \ar[d, "\sigma_1"] \ar[r, "d_1\epsilon_1^{-1}"]
  & H^1_{DR}(\cE_{1,s_0}) \ar[d, "\sigma_1"] \ar[r, "N_0(f_0^*)^{-1}"]
  & H^1_{DR}(\cE_{2,s_0}) \ar[d, "\sigma_2"]
\\  H^1_v(\cE_{2,s_0})  \ar[r, "\iota_2"] \ar[rrrr, bend right=15, "\phi"]
  & H^1_v(\cE_{2,s})    \ar[r, "f^*"]
  & H^1_v(\cE_{1,s})    \ar[r, "d_1\iota_1^{-1}"]
  & H^1_v(\cE_{1,s_0})  \ar[r, "N_0(f_{0}^*)^{-1}"]
  & H^1_v(\cE_{2,s_0}).
\end{tikzcd} \end{center}
By following $\gamma_{v,2,s_0}$ and $\delta_{v,2,s_0}$ along the top line of this diagram, we conclude that,
with respect to the basis $\{\sigma_2(\gamma_{v,2,s_0}),\sigma_2(\delta_{v,2,s_0})\}$, the matrix representing $\phi$ is $d_1N_0M_v$.

Therefore, since $H^1_{cris}(\cE_{2,s_0,k_v}/W(k_v))$ is canonically isomorphic to the Dieudonn\'e module associated with the $p$-divisible group $\cE_{2,s_0,k_v}[p^\infty]$ \cite[Rem.~7.3.3]{BC:CMI}, we conclude from \cite[V.2, Corollary]{demazure} that $\tr(d_1N_0M_v)\in\bZ$. We obtain $\tr(M_v)\in\bQ$, as claimed.
\end{proof}




Resuming from the passage preceding Lemma \ref{lem:trace}, we have
\[A^T\Omega_{v,1}(s^\van)=\Omega_{v,2}(s^\van)M^T_v.\] 
Since the Taylor series of an analytic function $\varphi$ converges to $\varphi$ inside the radius of convergence, if we write $F_{i,s}:=(F^\van_{i}(x^\van(s^\van))_{mn})_{1\leq m,n\leq 2}$, then we obtain
\[ A^T F_{1,s} \theta_{v,1} = F_{2,s} \theta_{v,2} M_v^T. \]
Using~\eqref{eqn:theta-12}, this rearranges to
\begin{equation} \label{eqn:Mv-conjugation}
\mu(F_{2,s})^{-1} A^T F_{1,s} = \theta_{v,2} M^T_v \theta_{v,2}^{-1}.
\end{equation}
Using the invariance of trace under conjugation, this yields
\[\tr(\mu(F_{2,s})^{-1}A^TF_{1,s}) = \tr(M^T_v) = \tr(M_v),\]
and, finally, multiplying both sides by $\det(F_{2,s})$, we arrive at
\[ \tr(\mu F_{2,s}^{\rm adj}A^TF_{1,s}) = \det(F_{2,s})\tr(M_v).\]
This gives a $v$-adic relation $Q_v$ of degree $2$ between the evaluations of the elements of $\cG$ at $x(s)$. The fact that $Q_v\notin I$ is readily seen by observing that $Q_v$ has degree~$1$ with respect to the entries of~$F_{1,s}$, while $I$ is generated by a polynomial of degree~$2$ with respect to $X_{1mn}$ ($1 \leq m,n \leq 2$).

\subsubsection{Case 2}\label{sec:Case2}

Now suppose $v\in\Sigma_{K,f}$ and that the $\cE_{i,s_0}$ (equivalently, the $\cE_{i,s}$) have bad reduction at $v$. Thus, after possibly replacing $K$ with a finite extension, we have $\fG_{i,\fs_0,k_v}\cong\fG_{i,\fs,k_v}\cong\bG_{m,k_v}$.
At such a place, we may interpret the $v$-adic evaluations of the G-functions associated with $\cE_i \to C_0$ in a similar way to the ``locally invariant periods'' around a point of multiplicative degeneration, as in \cite{Y(1)} or \cite{LGO}.

More precisely, we can put ourselves in the situation of \cite[3.B.1]{LGO} using the following dictionary, which sends the notations of \cite[3.B.1]{LGO} to those of this section:
\begin{align} \label{eqn:dictionary}
        R\mapsto  \ \cO_{K_v},\ 
        \fC\mapsto  \ \fC_{\cO_{K_v}},\ 
        \fC_{0,\fp}\mapsto  \ \fs_0(\fp_v),\ 
        \fG\mapsto & \ \fG_{\cO_{K_v}.}
\end{align}

Let $\cC_v$ denote the residue disc $\red_v^{-1}(\fs_0(\fp_v)) \subset \hat{\fC}_{\fC_{k_v}}^\rig \subset C^\van$ (where, as before, $\hat{\fC}_{\fC_{k_v}}$ denotes the formal completion of $\fC$ along $\fC_{k_v}$). By \cite[Prop.~0.2.7]{Berthelot96}, $\cC_v$ is equal to the image $\hat{\fC}_{\fs_0(\fp_v)}^\rig\subset C^\van$ of the formal completion $\hat{\fC}_{\fs_0(\fp_v)}$ of $\fC$ along $\fs_0(\fp_v)$ under Berthelot's rigid generic fibre functor.
Thus, via the dictionary~\eqref{eqn:dictionary}, the rigid space denoted $\cC = \fC_{\formal\dag}^\rig$ in \cite[3.B.1]{LGO} corresponds to~$\cC_v$.

Let $\cE_{v,i}:=\cE^\van_i|_{\cC_v}$. By \cite[Prop.~3.1 and Lemma~3.3]{LGO}, there exist rigid analytic uniformisations $\phi_{v,i}:\bG^\van_m\times\cC_v\to\cE_{v,i}$.
(Note that we do not require any compatibility between the rigid uniformisations at different places~$v$, essentially because the functions $Y_v$ defined in Lemma~\ref{lem:GF-ind-v} are independent of the choice of horizontal sections~$\gamma_{v,i}$.)
Moreover, by \cite[Satz 5]{Ger70}, we obtain commutative diagrams
\[ 
\xymatrix{
    \bG_m^\van    \ar[d]^{\phi_{v,1,s_0}} \ar[r]^{[m_0]}
  & \bG_m^\van   \ar[d]^{\phi_{v,2,s_0}}
\\  \cE_{v,1,s_0}                              \ar[r]^{f^\van_0}
  & \cE_{v,2,s_0}
} 
\hspace{1cm}\xymatrix{
    \bG_m^\van    \ar[d]^{\phi_{v,1,s}} \ar[r]^{[m]}
  & \bG_m^\van   \ar[d]^{\phi_{v,2,s}}
\\  \cE_{v,1,s}                              \ar[r]^{f^\van}
  & \cE_{v,2,s}
}
\]
for natural numbers $m_0|N_0$ and $m|\deg(f)$, from which we obtain commutative diagrams
\begin{equation} \label{eqn:unif-pullback-diagrams}
\xymatrix{
    H^1_{DR}(\cE_{v,2,s_0})    \ar[d]^{\phi^*_{v,2,s_0}} \ar[r]^{(f^\van_0)^*}
  & H^1_{DR}(\cE_{v,1,s_0})   \ar[d]^{\phi^*_{v,1,s_0}}
\\  H^1_{DR}(\bG^\van_m)                              \ar[r]^{[m_0]}
  & H^1_{DR}(\bG^\van_m)
}
\hspace{1cm}
\xymatrix{
    H^1_{DR}(\cE_{v,2,s})    \ar[d]^{\phi^*_{v,2,s}} \ar[r]^{(f^\van)^*}
  & H^1_{DR}(\cE_{v,1,s})   \ar[d]^{\phi^*_{v,1,s}}
\\  H^1_{DR}(\bG^\van_m)                              \ar[r]^{[m]}
  & H^1_{DR}(\bG^\van_m).
}
\end{equation}

Since $\cE_i\to C_0$ is proper, the natural map
\[H^1_{DR}(\cE_i/C_0)^\van\to H^1_{DR}(\cE^\van_i/C^\van_0)\]
of modules with integrable connection is an isomorphism by standard applications of rigid analytic GAGA (or, more precisely, \cite[Appendix A, (A.1.1)]{Conrad:relative}). 
We also have the pullback
\[\phi_{v,i}^*|_{\cC_v}:H^1_{DR}(\cE_{v,i}/\cC_v)\to H^1_{DR}(\bG^\van_m\times\cC_v/\cC_v)\]
(which is non-trivial because $\phi_{v,i}$ is smooth), for which the target is isomorphic to \[H^1_{DR}(\bG^\van_m)\otimes_{K_v}\cO_{\cC_v}= \cO_{\cC_v}\cdot dz/z.\]
(Indeed, $\bG_m \times C_0 \to C_0$ is a rational elementary fibration \cite[Def.~25.1.4]{ABC}, hence, its Artin set is equal to~$C_0$ \cite[Rmk.~25.2.3]{ABC}. Therefore, \cite[Thm.~32.2.1]{ABC} gives us this isomorphism.) 

Since $\Delta_v$ is simple, we have $\Delta_v \subset \cC_v$.
By \cref{lem:horizontal-sections}, after shrinking $\Delta_v$ and~$R_v$, we may assume that $H^1_{DR}(\cE_2/C)^\van(\Delta_v)$ possesses a basis of horizontal sections.
It follows that the restriction of $\phi_{v,2}^*|_{\Delta_v}$ to horizontal sections is a non-zero linear map from a $K_v$-vector space of dimension~$2$ to a $K_v$-vector space of dimension~$1$.
Therefore, we can choose a horizontal basis $\{\gamma_{v,2},\delta_{v,2}\}$ for $H^1_{DR}(\cE_2/C)^\van(\Delta_v)$ such that
\[\phi_{v,2}^*(\gamma_{v,2}) = dz/z, \quad \phi_{v,2}^*(\delta_{v,2}) = 0.\]

As in Section \ref{sec:Case1}, define the basis
\[\gamma_{v,1,s_0}:=(f^{\van}_0)^*\gamma_{v,2,s_0},\ \delta_{v,1,s_0}:=(f^{\van}_0)^*\delta_{v,2,s_0}\] 
for $H^1_{DR}(\cE_{v,1,s_0})$.
Again by \cref{lem:commutes}, after shrinking $\Delta_v$ and~$R_v$, we may assume that $H^1_{DR}(\cE_1/C)^\van(\Delta_v)$ possesses a basis of horizontal sections, so that $\{ \gamma_{v,1,s_0}, \delta_{v,1,s_0} \}$
extends to a horizontal basis $\{\gamma_{v,1},\delta_{v,1}\}$ for $H^1_{DR}(\cE_1/C_0)^\van(\Delta_v)$.
By \eqref{eqn:unif-pullback-diagrams} and the fact that horizontal sections form a local system, we have
\[ \phi_{v,1}^*(\gamma_{v,1}) = m_0\, dz/z, \quad \phi_{v,1}^*(\delta_{v,1}) = 0. \]

Using the right-hand diagram from~\eqref{eqn:unif-pullback-diagrams}, with $A$ and $\Omega_{v,i}$ defined as in Section \ref{sec:Case1}, we arrive at the following equation:
\[m_0\,A^T\Omega_{v,1}(s^\van)(dz/z,0)^T=m\, \Omega_{v,2}(s^\van)(dz/z, 0)^T\]
(where we write $(dz/z, 0)^T$ for the corresponding column vector).
If $(\theta_{v,2})_{21}\neq 0$, we write $\lambda_{v,2}:=(\theta_{v,2})_{11}(\theta_{v,2})^{-1}_{21}$ and, expanding, we obtain
\begin{align}
\label{eqn:ac-rel}
    a\mu(F^\van_{1,s})_{12}+c\mu(F^\van_{1,s})_{22}-\frac{m}{m_0}(F^\van_{2,s})_{12}&=\lambda_{v,2}[\frac{m}{m_0}(F^\van_{2,s})_{11}-a\mu(F^\van_{1,s})_{11}-c\mu(F^\van_{1,s})_{21}]\\
\label{eqn:bd-rel}
    b\mu(F^\van_{1,s})_{12}+d\mu(F^\van_{1,s})_{22}-\frac{m}{m_0}(F^\van_{2,s})_{22}&=\lambda_{v,2}[\frac{m}{m_0}(F^\van_{2,s})_{21}-b\mu(F^\van_{1,s})_{11}-d\mu(F^\van_{1,s})_{21}],
\end{align}
where, again, $(F^\van_{i,s})_{jk}:=(F^\van_{i}(x^\van(s^\van)))_{jk}$.
If the right-hand side of~\eqref{eqn:ac-rel} is $0$, we obtain a linear $v$-adic relation between the evaluations of the elements of $\cG$ at $x(s)$. Otherwise, we can divide \eqref{eqn:bd-rel} by~\eqref{eqn:ac-rel} and clear denominators to obtain a $v$-adic relation of degree~$2$ between the evaluations of the elements of $\cG$ at $x(s)$. 
If $(\theta_{v,2})_{21}= 0$, we must have $(\theta_{v,2})_{11}\neq 0$, and we obtain
\begin{align}
    \frac{m}{m_0}(F^\van_{2,s})_{11}-a\mu(F^\van_{1,s})_{11}-c\mu(F^\van_{1,s})_{21}=0
\end{align}

Again, the fact that these relations are not in $I$ is readily seen by elementary computation. 
\qed

\subsection{Isogenous triples}\label{sec:triples}

In this section, we introduce a third elliptic scheme in order to obtain a single relation holding at all places of good ordinary reduction. That is, we repeat the setup of Section \ref{sec:pairs} but with the following modifications: we let $i\in\{1,2,3\}$, we suppose that there are isogenies 
\[f_0:\cE_{1,s_0}\to\cE_{2,s_0}\text{ and }f'_0:\cE_{1,s_0}\to\cE_{3,s_0}\]
of degrees $N_0:=\deg(f_0)$ and $N'_0:=\deg(f'_0)$, and we impose the analogous conditions on the bases for each $H^1_{DR}(\cE_i/C_0)$, with constants $\mu,\mu'\in\bQ$, corresponding to $f_0$ and $f'_0$, respectively. We will use the notation of Section \ref{sec:Case1}.

The result is as follows.

\begin{proposition}\label{prop:rel2}
    Let $s\in C(L)$, with $L$ a finite extension of $K$, such that there exist isogenies \[f:\cE_{1,s}\to\cE_{2,s}\text{ and }f':\cE_{1,s}\to\cE_{3,s}.\]
    
    Let $v\in\Sigma_{K}$ and choose $w\in\Sigma_{L}$ lying over $v$. 
    
    If $s^\wan\in \Delta_v$, then there exists a $w$-adic relation $Q_{w}$ of degree at most $4$ between the evaluations of the elements of $\cG$ at $x(s)$ which is not contained in the ideal $I$ of $\Qbar[X_{i mn}:1\leq i\leq 3,\  1\leq m,n\leq 2]$ generated by the elements 
    \[\det((X_{1 mn})_{mn})-\det((X_{2 mn})_{mn})\text{ and }\det((X_{1 mn})_{mn})-\det((X_{3 mn})_{mn}).\]
    Moreover, for $w\in\Sigma_{L,f}$ for which the $\cE_{i,s,k_v}$ are ordinary, we can choose $Q_{w}$ independently of $v$. Otherwise, we can choose $Q_{w}$ of degree at most $2$.
\end{proposition}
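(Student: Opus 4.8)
The plan is to produce the basic degree-$4$ relation at every place by multiplying the two degree-$2$ relations that Proposition~\ref{prop:rel} already gives for pairs, and to sharpen this to a $v$-independent relation at ordinary places by a commutator argument exploiting the canonical slope decomposition of crystalline cohomology of ordinary abelian varieties.

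First I would fix the auxiliary data coherently: choose $\{\omega_1,\eta_1\}$ on $H^1_{DR}(\cE_1/C)$ freely and define $\{\omega_i,\eta_i\}$ for $i\in\{2,3\}$ so that the analogue of \eqref{eqn:omega-f0} holds for $f_0$, resp.\ $f'_0$; similarly, for each $v$ fix one horizontal basis of $H^1_{DR}(\cE_1/C)^\van(\Delta_v)$ and transport it to $\cE_2,\cE_3$ along $f_0,f'_0$. Running the proof of Proposition~\ref{prop:rel} on the pair $(\cE_1,\cE_2)$ with $(f,f_0)$, and on $(\cE_1,\cE_3)$ with $(f',f'_0)$, yields, whenever $s_0^\van,s^\vhatan\in\Delta_v$, homogeneous $\hat v$-adic relations $Q^{(12)}_{\hat v}$ and $Q^{(13)}_{\hat v}$ of degree at most $2$, with $Q^{(12)}_{\hat v}\notin(\det((X_{1mn})_{mn})-\det((X_{2mn})_{mn}))$ and $Q^{(13)}_{\hat v}\notin(\det((X_{1mn})_{mn})-\det((X_{3mn})_{mn}))$. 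I would take $Q_{\hat v}:=Q^{(12)}_{\hat v}Q^{(13)}_{\hat v}$, of degree at most $4$. That $Q_{\hat v}\notin I$ follows because $I$ is prime --- its zero locus $\{(M_1,M_2,M_3):M_i\in\rM_2(\Qbar),\ \det M_1=\det M_2=\det M_3\}$ is irreducible, being fibred over $\bA^4$ (the coordinate $M_1$) with irreducible fibres $\{\det=t\}\times\{\det=t\}$, and a reduced complete intersection --- and because the substitution $X_{3mn}\mapsto X_{1mn}$ shows $I\cap\Qbar[X_{imn}:i\le 2]=(\det((X_{1mn})_{mn})-\det((X_{2mn})_{mn}))$ (and symmetrically), so that neither factor lies in $I$.

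For the ``moreover'', let $v$ be a place of good ordinary reduction. The extra input is the canonical slope decomposition: for an ordinary elliptic curve $E$ over $k_v$ the isocrystal $H^1_{cris}(E/W(k_v))\otimes\bQ$ splits canonically as a (unit-root line) $\oplus$ (slope-$1$ line), a decomposition preserved by pullback along any isogeny and compatible with base change. In the Case~1 construction of the proof of Proposition~\ref{prop:rel} I would therefore choose $\{\gamma_{v,1,s_0},\delta_{v,1,s_0}\}$ so that, via $\sigma_1$, $\gamma_{v,1,s_0}$ spans the unit-root line and $\delta_{v,1,s_0}$ the slope-$1$ line of $H^1_{cris}(\cE_{1,s_0,k_v})$, and then let $\gamma_{v,i,s_0}$ (for $i=2,3$) be the preimage of $\gamma_{v,1,s_0}$ under the isomorphism induced by $f_0$, resp.\ $f'_0$, and likewise for $\delta$. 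Since these isomorphisms preserve slopes, the bases are adapted to the slope decompositions of $\cE_2,\cE_3$ as well; since the comparison isomorphisms $\sigma_i$ and the $\iota_i=\alpha_i^*$ of Lemma~\ref{lem:commutes} preserve slopes, so are their parallel transports $\{\gamma_{v,i,s},\delta_{v,i,s}\}$; and one then reads off, via Lemma~\ref{lem:commutes}, that $M_v$ and $M'_v$ --- which are also the matrices of the crystalline pullbacks of $f_{k_v},f'_{k_v}$ in these bases --- are diagonal.

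Consequently $F_{2,s}^{-1}A^T F_{1,s}=\Omega_{v,0}M_v^T\Omega_{v,0}^{-1}$ and $F_{3,s}^{-1}(A')^T F_{1,s}=\Omega_{v,0}(M'_v)^T\Omega_{v,0}^{-1}$ --- these being \eqref{eqn:Mv-conjugation} and its analogue, with a common $\Omega_{v,0}$ thanks to the coherent choice forcing $\Omega_{v,1}(s_0^\van)=\Omega_{v,2}(s_0^\van)=\Omega_{v,3}(s_0^\van)$ --- are simultaneously diagonalised, hence commute. Clearing denominators, the matrix of polynomials
\[
\bigl[\,(X_{2mn})^{\mathrm{adj}}A^T(X_{1mn})\ ,\ (X_{3mn})^{\mathrm{adj}}(A')^T(X_{1mn})\,\bigr],
\]
whose entries are homogeneous of degree $4$ with coefficients in $K$ (hence independent of $v$), vanishes at $(F_{1,s},F_{2,s},F_{3,s})$. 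I would take $Q_{\hat v}$ to be an off-diagonal entry; testing on the zero locus of $I$, for instance at $X_1=X_2=I$ and $X_3=\fullsmallmatrix{1}{1}{0}{1}$, shows that some such entry is not in $I$ provided $A$ or $A'$ is non-scalar, and the scalar case yields even simpler $v$-independent relations straight from \eqref{eqn:Mv-conjugation}. The same $Q$ then serves for all ordinary $\hat v$. I expect the crux to be exactly the slope bookkeeping --- carrying the canonical unit-root splitting through the crystalline--de Rham comparison, the isogeny pullbacks, and Gauss--Manin transport, simultaneously for $f_0$ and $f'_0$, so that $M_v$ and $M'_v$ emerge diagonal and with a common conjugator; the product construction, the primality of $I$, and the degree and non-triviality checks are then routine.
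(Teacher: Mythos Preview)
Your proposal is correct and follows essentially the same strategy as the paper, with two minor differences worth noting.

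For the first assertion, the paper simply invokes Proposition~\ref{prop:rel} once (for the pair $(\cE_1,\cE_2)$, ignoring $\cE_3$), obtaining a degree-$2$ relation which is already $\leq 4$ and not in~$I$; your product-of-two-relations argument and the primality check for~$I$ are valid but unnecessary.

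For the ``moreover'', the paper's mechanism for diagonalising $M_v,M'_v$ is formally different from yours: rather than the slope decomposition, it uses the action of the imaginary quadratic field $L=\End(\cE_{2,s,k_v})\otimes\bQ$, choosing the horizontal basis so that $L$ acts diagonally on $H^1_v(\cE_{2,s})$ and transporting. For an ordinary elliptic curve these two decompositions coincide (Frobenius generates $L$, and its eigenspaces are the slope pieces), so the arguments are equivalent. The paper then extracts the single $v$-independent relation $\Lambda_{v,21}\Lambda'_{v,12}=\Lambda_{v,12}\Lambda'_{v,21}$ (coming from the fact that the ratio $\Lambda_{12}/\Lambda_{21}$ depends only on the common conjugator $\Omega_{v,0}$), whereas you take an off-diagonal commutator entry; both are degree~$4$ after clearing denominators, and both require an elementary non-triviality check that the paper, like you, leaves as a computation.
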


\begin{proof}
The existence of $Q_w$ with $\deg(Q_w) \leq 2$ for $w \not\in \Sigma_{L,f}$ follows immediately from Proposition~\ref{prop:rel} (we may ignore $\cE_3$ for these parts of the proposition).

It remains to prove that, for those $w \in \Sigma_{L,f}$ for which $\cE_{i,s,k_v}$ are ordinary, we can choose $Q_w$ independently of $w$ and of degree at most~$4$.
Consider such a place~$w$.
Replace $K$ with $L$ and $v$ with $w$. 
    
In this case, $E:=\End(\cE_{2,s,k_v})\otimes_\bZ\bQ$ is an imaginary quadratic field. After possibly replacing $K$ with a finite extension (which may depend on $v$ but will not be visible in our sought relation), we may choose $\{\gamma_{v,2,s_0},\delta_{v,2,s_0}\}$ so that the action of $E$ on $H^1_v(\cE_{2,s})$ is diagonal. This $\{\gamma_{v,2,s_0},\delta_{v,2,s_0}\}$ determines $\{\gamma_{v,3,s_0},\delta_{v,3,s_0}\}$ via the conditions
\[ \gamma_{v,1,s_0} = f_0^* \gamma_{v,2,s_0} = f_0'^* \gamma_{v,3,s_0},
\quad \delta_{v,1,s_0} = f_0^* \delta_{v,2,s_0} = f_0'^* \delta_{v,3,s_0}. \]
The composed morphism
\[ H^1_v(\cE_{2,s}) \xrightarrow{\iota_2^{-1}} H^1_v(\cE_{2,s_0}) \xrightarrow{f_0^*} H^1_v(\cE_{1,s_0}) \xrightarrow{(f_0'^*)^{-1}} H^1_v(\cE_{3,s_0}) \xrightarrow{\iota_3} H^1_v(\cE_{3,s}) \]
conjugates the action of $E = \End(\cE_{2,s,k_v}) \otimes_\bZ \bQ$ on~$H^1_v(\cE_{2,s,k_v})$ into the action of $\End(\cE_{3,s,k_v}) \otimes_\bZ \bQ \cong E$ on~$H^1_v(\cE_{3,s})$.
Hence the latter action is also diagonal, with respect to the basis $\{\gamma_{v,3,s_0},\delta_{v,3,s_0}\}$.

If we denote by $A$, $A'$, $M_v$ and $M'_v$ the matrices representing $f$ and $f'$ (as above), we conclude, as in the proof of Lemma \ref{lem:trace}, that the corresponding endormorphisms $\phi$ and $\phi'$ on $H^1_v(\cE_{2,s})$ and $H^1_v(\cE_{3,s})$ are represented by $d_1N_0M_v$ and $d_1N'_0M'_v$, respectively ($d_1$ as defined in Lemma \ref{lem:commutes}). Hence, we conclude that $M_v$ and $M'_v$ are diagonal.

Again, as at~\eqref{eqn:Mv-conjugation}, we obtain
    \begin{align*}
        \Lambda_v&:=\mu(F_{2,s})^{\rm adj}A^TF_{1,s}=\det(F_{2,s})\theta_{v,2} M_v\theta_{v,2}^{-1};\\
        \Lambda'_v&:=\mu'(F_{3,s})^{\rm adj}(A')^TF_{1,s}=\det(F_{3,s})\theta_{v,3} M'_v\theta_{v,3}^{-1}.
    \end{align*}
Since $M_v$ and $M'_v$ are diagonal and $\theta_{v,3}=\mu(\mu')^{-1}\theta_{v,2}$, the matrices $\Lambda_v$ and $\Lambda'_v$ commute. In particular,
\begin{equation} \label{eqn:Lambda-commute-relation}
\Lambda_{v,21}\Lambda'_{v,12}=\Lambda_{v,12}\Lambda'_{v,21}.
\end{equation}
Each entry of~$\Lambda_v$ or~$\Lambda'_v$ is the evaluation at $(F_{imn}^\van(x(s)^\van)_{1 \leq i \leq 3,\, 1 \leq m,n \leq 2}$ of a polynomial of degree $4$, with coefficients in~$K$, which is independent of~$v$.
Thus, \eqref{eqn:Lambda-commute-relation} gives us the desired $v$-adic relation between evaluations of~$\cG$ at~$s$, independent of~$v$.

Again, the fact that these relations are not in $I$ can be seen by elementary computation. For example, since $A$ and $A'$ are invertible, we can find $\gamma_2, \gamma_3 \in \SL_2(K)$ such that
\[ \gamma_2^{\rm adj} A^T = \fullmatrix{\det(A)}{1}{0}{1}\text{ and }\, \gamma_3^{\rm adj} (A')^T = \fullmatrix{\det(A')}{0}{1}{1}. \]
Define $\gamma_1$ to be the identity matrix so, in particular, we have  
\[\det(\gamma_1) = \det(\gamma_2) = \det(\gamma_3).\]
Hence, $(\gamma_1,\gamma_2,\gamma_3)\in V(I)$. However, we have 
\begin{align*}
   \Lambda_v(\gamma_1,\gamma_2,\gamma_3) & = \mu \gamma_2^{\rm adj} A^T \gamma_1 = \mu \fullmatrix{\det(A)}{1}{0}{1},
\\ \Lambda'_v(\gamma_1,\gamma_2,\gamma_3) & =  \mu' \gamma_3^{\rm adj} (A')^T \gamma_1 = \mu' \fullmatrix{\det(A')}{0}{1}{1}.
\end{align*}
Thus, $(\gamma_1,\gamma_2,\gamma_3)$ does not satisfy~\eqref{eqn:Lambda-commute-relation}.
\end{proof}

\section{Proving Theorems \ref{teo:main-ht} and \ref{teo:super-ht}}\label{sec:proofs}

In this section, we prove the main height bounds, namely, Theorems \ref{teo:main-ht} and \ref{teo:super-ht}. Before doing so, we collect some general constructions that we will use in the proofs.

\subsection{Elliptic curve schemes}\label{sec:ecs} 
Let $\cE\to S:=\bA^1_\bQ\setminus\{0,1728\}$ denote the ``$j$-family'' of elliptic curves defined by the equation 
\[y^2+xy=x^3-\frac{36}{j-1728}x-\frac{1}{j-1728}.\]
As per the name, for $j_0\in S(\Qbar)$, the fiber $\cE_{j_0}$ is an elliptic curve with $j$-invariant $j(\cE_{j_0})=j_0$. The fiber at $\infty$ of the Zariski closure of~$\cE$ in $\bP^1 \times \bP^2$ is a nodal cubic, so the fibre at~$\infty$ of the connected N\'eron model of~$\cE$ over $\bP^1_\bQ\setminus\{0,1728\}$ is isomorphic to $\bG_m$.

Now let $n\in\mathbb{N}$ and suppose that $C_0\subset S_{K}^n$ is a geometrically irreducible algebraic curve over a number field $K$. For $i\in\{1,\ldots, n\}$, we obtain elliptic schemes $\cE_{i,0}\to C_0$ by pulling back $\cE_K\to S_K$ along the co-ordinate projections $C_0\to S_{K}$. We refer to the $\cE_{i,0}\to C_0$ as the {\bf standard elliptic curve schemes on $C_0$}.

\subsection{Covering data}\label{sec:cover}
Consider the situation in Section \ref{sec:ecs} and let $C$ denote a smooth projective model of $C_0$.
After possibly replacing $K$ with a finite extension and $C$ with a finite \'etale cover, we may assume that the connected N\'eron model $\cE_i$ of $\cE_{i,0}$ over $C$ is semiabelian \cite[Sec.~7.4, Thm.~1]{BLR90}.

Let $s_0\in C(K)$. By \cite[Lemma 6.5]{LGO}, after possibly replacing $K$ with a finite extension, we obtain 
\begin{enumerate}[(a)]
\item a smooth projective geometrically irreducible algebraic curve $\tilde{C}$ over $K$;
\item a non-constant morphism $\nu:\tilde{C}\to C$ over $K$;
\item a projective regular $\cO_K$-model ${\fC}$ of $\tilde{C}$;
\item semiabelian schemes $\fG_i\to{\fC}$ for $i\in\{1,\dotsc,n\}$;
\item a non-constant rational function $x\in K(\tilde{C})$,
\end{enumerate}
such that
\begin{enumerate}[(i)]
\item for each~$i\in\{1,\ldots,n\}$, $\fG_{i,K}\cong\tilde{\cE}_i:=\tilde{C}\times_{C}\cE_i$;
\item every point $s\in \tilde{C}(\overline{K})$ satisfying $x(s) = 0$ is a simple zero of $x$ and satisfies $\nu(s) = s_0$;
\item $x:\tilde{C}\to\bP^1$ is Galois, which is to say, $\Aut_x(\tilde{C})= \{\sigma\in\Aut(\tilde{C}) : x \circ \sigma = x\}$ acts
transitively on each $\overline{K}$-fibre of $x$.
\end{enumerate}
We may replace $K$ with a further finite extension so that the zeros $s_1,\ldots,s_\ell\in\tilde{C}(\overline{K})$ of $x$ belong to $\tilde{C}(K)$. Note that, by (ii), $x$ is a local parameter at $s_k$ for all $k\in\{1,\ldots,\ell\}$. By (iii), for each $k$, there is an element $\sigma_k\in\Aut_x(\tilde{C})$ such that $\sigma_k(s_1)=s_k$.

By \cite[Lemma 6.2]{LGO}, there exists another regular projective $\cO_K$-model ${\fC}'$ of $\tilde{C}$ such that the $\sigma_k$ extend to morphisms ${\fC}'\to{\fC}$. Let $\tilde{C}_0=\nu^{-1}(C_0)$. We refer to \[\cD:=(\fG_1,\ldots,\fG_n,{\fC}',{\fC},\tilde{C},\tilde{C}_0,\nu,x,s_1,\dots,s_\ell,\sigma_1,\dots,\sigma_\ell)\] as a {\bf covering datum for $(C,s_0)$}.

\subsection{Pullback representatives}\label{sec:reps}

Continue from the situation obtained in Section~\ref{sec:cover}.
Fix a set of indices $I \subset \{1,\dotsc,n\}$ (this is for convenience, enabling us to have a common set-up for the proof of \cref{teo:main-ht}, which involves two indices, and the proof of \cref{teo:super-ht}, which involves three indices).
For $\lambda=(k,i)\in \{ 1, \dotsc, \ell \}\times I$, we write $\fG_\lambda:=\sigma^*_k\fG_i$ and we write $\cE_\lambda:=\fG_{\lambda,K}$. Then $\fG_\lambda$ is a scheme over ${\fC}'$ and $\cE_\lambda$ is a scheme over ${\fC}'_K=\tilde{C}$.

Let $\bar\eta$ denote a geometric generic point of $\tilde{C}$ and define an equivalence relation $\sim$ on $\{ 1, \dotsc, \ell \}\times I$ by the condition
\[ \lambda \sim \mu \text{ if there exists an isogeny } \cE_{\lambda,\bar\eta} \to \cE_{\mu,\bar\eta}.\]
Let $\Lambda$ denote a set of representatives for the induced equivalence classes. We refer to $\Lambda$ as a {\bf set of pullback representatives for $(\cD,I)$}. We will write $[k,i]$ for the unique element of $\Lambda$ equivalent to $(k,i)$.

\subsection{Neighbourhood systems} \label{sec:neighourhood-systems}

Continue from the situation obtained in Section \ref{sec:reps}. To simplify notation, relabel $C:=\tilde{C}$. For each archimedean (resp.\ non-archimedean) place $v$ of $K$, apply \cite[Lemma 5.5]{Y(1)} (resp.\ this paper's Proposition \ref{rigid-discs}) to $C$, $\fC$ and $x\in K(C)$, thereby obtaining real numbers $r_v > 0$ and simply connected (resp. simple) open subspaces $U_{v,1}, \dotsc, U_{v,\ell}$ of $C^\van$ with the properties (i)--(v) of \cite[Lemma 5.5]{Y(1)} (resp.\ Proposition \ref{rigid-discs}). We refer to $\cS:=((r_v,U_{v,1}, \dotsc, U_{v,\ell})_v)$ as a {\bf neighbourhood system for $(C,x)$}.

Let $\zeta\in K^\times$ such that $|\zeta|_v\leq r_v$ for the finitely many places $v$ of $K$ for which $r_v<1$ and let $H$ denote the G-function $\zeta/(\zeta-X)$. We refer to $H$ as an {\bf auxillary G-function} for $\cS$.

\subsection{Proof of Theorem \ref{teo:main-ht}}\label{proof:main-ht}

Consider the situation in Theorem \ref{teo:main-ht} and let $K$ denote the field of definition of $C \subset Y(1)^3$. To match the notation of the preceding sections, we relabel $C$ and $\overline{C}$ from the statement of Theorem~\ref{teo:main-ht} as $C_0$ and~$C$, respectively.
We break the proof into sections.

\subsubsection{Reductions}
After possibly relabeling the coordinates, we can assume that $C\setminus C_0$ contains a point $s_0:=(b_1,b_2,\infty)$ with $\Phi_N(b_1,b_2)=0$ for some $N\in\bN$. After possibly removing finitely many points from $C_0$, namely, those belonging to the special hyperplanes defined by setting a coordinate to $0$ or $1728$, we can assume that $C_0$ is contained in $S^3_K$. Define $\cE_{i,0}$ and $\cE_i$ as in Sections \ref{sec:ecs} and \ref{sec:cover}, respectively. 

Thus, after possibly replacing $K$ with a finite extension and $C$ with a finite \'etale cover, we obtain a covering datum
\[\cD:=(\fG_1,\fG_2,\fG_3,{\fC}',{\fC},\tilde{C},\tilde{C}_0,\nu,x,s_1,\dots,s_\ell,\sigma_1,\dots,\sigma_\ell)\]
for $(C,s_0)$ (as defined in Section~\ref{sec:cover}).
Fix a Weil height $h$ on $\tilde{C}$. By standard properties of heights, it suffices for us to show that there exist constants $\newC{main-Weil-ht-mult}$ and ${\newC{main-Weil-ht-exp}}$ such that, for any modular point $s:=(x_1,x_2,x_3) \in C_0$, and any $\tilde{s}\in\nu^{-1}(s)\in\tilde{C}_0$, we have 
\[h(\tilde{s})\leq\refC{main-Weil-ht-mult}[K(s):K]^{\refC{main-Weil-ht-exp}}.\]

\subsubsection{Pullback representatives}\label{sec:pullback-reps}

For $I:=\{1,2\}$, let $\Lambda\subset \{1,\ldots,\ell\}\times I$ be a set of pullback representatives for $(\cD,I)$. As in \cite[Rem. 5.3]{Y(1)}, we have $[k,1]\neq [k,2]$ for all $k\in\{1,\ldots,\ell\}$. Define $\cE_\lambda$ and $\fG_\lambda$ as in Section \ref{sec:reps} and let ${\fC}_0\subset{\fC}'$ denote a Zariski open subset such that $s_1\in\fC_{0}(K)$ and $\fG_\lambda\times_{{\fC}'}{\fC}_0$ is an elliptic scheme for all $\lambda\in\Lambda$. Relabel $C:=\tilde{C}$ and $C_0:=\tilde{C}_0$, and replace ${C}_0$ with ${\fC}_{0,K}$. 
Write $\cE_{\lambda,0}:=\cE_{\lambda}\times_C C_0$. After possibly replacing $K$ with a finite extension and possibly removing finitely many points from $C_0$, we may assume that $H^1_{DR}(\cE_{\lambda,0}/C_0)$ is free for all $\lambda\in\Lambda$.

\subsubsection{Isogeny data}\label{sec:isogeny-data}

By \cite[Lemma 5.4]{Y(1)} and the hypothesis that $(b_1,b_2)$ is modular, for each $k\in\{1,\ldots,\ell\}$, there exist isogenies
\begin{equation*} 
\cE_{[k,1],s_1}\to(\sigma^*_k\cE_1)_{s_1}=\cE_{1,s_k} = \cE_{b_1} \to \cE_{b_2} = \cE_{2,s_k}=(\sigma^*_k\cE_2)_{s_1}\to\cE_{[k,2],s_1}
\end{equation*}
of elliptic curves over~$\ov K$. We want to choose isogenies 
\(\cE_{[k,1],s_1}\to\cE_{[k,2],s_1}\),
but we shall require certain compatibility conditions between these isogenies for different~$k$.
We achieve this using the following construction.

Consider the undirected graph $G$ with vertex set $V(G)=\Lambda$ and edge set $E(G)$ comprising pairs of vertices of the form $[k,1]$ and $[k,2]$ for some $k\in\{1,\ldots,\ell\}$. Note that, for each $e\in E(G)$, the index $k\in\{1,\ldots,\ell\}$ giving rise to that edge is not necessarily unique. Thus, for each $e\in E(G)$, we assign such a $k:=k(e)$, and we define $e(i):=[k(e),i]$ (which is an endpoint of~$e$).

Choose an isogeny $f_e \colon \cE_{e(1),s_1} \to \cE_{e(2),s_1}$ for each edge~$e$ as follows.
\begin{enumerate}
    \item Let $T$ denote a spanning forest for $G$. That is, for each connected component of $G$, choose a spanning tree, and let $T$ be the union of these spanning trees. 
    \item For each $e\in E(T)$, choose $f_{e}:\cE_{e(1),s_1}\to\cE_{e(2),s_1}$ arbitrarily.
    \item For each remaining $e\in E(G)$, there exists a unique sequence of edges $e_1,\ldots,e_r\in E(T)$ connecting $e(1)$ to $e(2)$; to each edge $e_i$ define
    \[   
g_{e_i} = 
     \begin{cases}
       f_{e_i} &\quad\text{if we traverse $e_i$ from $e_i(1)$ to $e_i(2)$}\\
       f^\vee_{e_i} &\quad\text{if we traverse $e_i$ from $e_i(2)$ to $e_i(1)$},
     \end{cases}
\]
and define $f_e:\cE_{e(1),s_1}\to\cE_{e(2),s_1}$ to be the composition $g_{e_r}\circ\cdots\circ g_{e_1}$.
\end{enumerate}

After possibly replacing $K$ with a finite extension, we may assume that the $f_{e}$ are defined over~$K$. For each $\lambda\in\Lambda$, we choose bases $\{\omega_{\lambda},\eta_{\lambda}\}$ for $H^1_{DR}(\cE_{\lambda,0}/C_0)$ such that, for any $e\in E(G)$, we have
\begin{equation}\label{eqn:basis-compactibility}
\omega_{e(1),s_1} = \mu_ef_{e}^*\omega_{e(2),s_1} \text{ and } \eta_{e(1),s_1} = \mu_e f_{e}^*\omega_{e(2),s_1},
\end{equation} 
for some $\mu_e\in\bQ$ (this is the analogue of condition \eqref{eqn:omega-f0} of Section \ref{sec:main}, with $s_1\in C(K)$ playing the role of $s_0$ and $f_{e}$ the role of $f_0$). In order to do this, consider a connected component $G' \subset G$.
Choose a vertex $\lambda_0 \in V(G')$ and choose an arbitrary basis $\{\omega_{\lambda_0}, \eta_{\lambda_0} \}$ for $H^1_{DR}(\cE_{\lambda_0,0}/C_0)$.
Working outwards from~$\lambda_0$, following the edges of the tree $G' \cap T$, we then choose a basis for $H^1_{DR}(\cE_{\lambda,0}/C_0)$ for each $\lambda \in V(G')$, satisfying~\eqref{eqn:basis-compactibility} for all $e \in E(G' \cap T)$ (for $e \in E(G' \cap T)$, we may choose $\mu_e$ arbitarily; for example, we may choose $\mu_e = 1$ for all $e \in E(G' \cap T)$).
Finally, observe that the chosen bases satisfy~\eqref{eqn:basis-compactibility} for all $e \in E(G') \setminus E(T)$, due to the construction of the $f_e$ above.
(Note that we are no longer free to choose the values of~$\mu_e$ for $e \in E(G') \setminus E(T)$, and we might not have $\mu_e=1$ for all such~$e$ because any edge~$e_i$ which was traversed in the reverse direction while constructing~$f_e$ will introduce a factor of $\deg(f_{e_i})$ in~$\mu_e$.)


Let $F_{\lambda mn}\in K[\![X]\!]$ ($1\leq m,n\leq 2$) denote the G-functions associated with $\cE_{\lambda,0}\to C_0$, $x$, $s_1$ and $\{\omega_{\lambda},\eta_{\lambda}\}$ (see Section \ref{sec:GF-ab-sch}) and define
\[\cG=\{F_{\lambda mn}:\lambda\in\Lambda,\ 1\leq m,n\leq 2\}.\]
Let $\cS:=((r_v,U_{v,1}, \dotsc, U_{v,\ell})_v)$ denote a neighbourhood system for $(C,x)$ and let $H$ denote an auxillary G-function for $\cS$. Define $\cG_H:=\cG\cup\{H\}$. For each $v\in\Sigma_K$, define
\[R_v:=\min\{1,R(H^\van),R(F^\van):F\in\cG\}\leq r_v.\]
For each $k\in\{1,\ldots,\ell\}$, let $U_{R_v,k}=(x^\van|_{U_{v,k}})^{-1}(D(0,R_v,K_v))$, and let $\Delta_v:=U_{R_v,1}$.
Then, for each $e\in E(G)$, 
\[\{C,\fC,\fG_{e(i)},C_0,\cE_{e(i),0},\fC_0,s_1,f_{e},\{\omega_{e(i)},\eta_{e(i)}\},x,F_{e(i)mn},R_v,\Delta_v:1\leq i,m,n\leq 2, v\in\Sigma_K\}\]
is an isogeny datum (see Section~\ref{sec:main}).


\subsubsection{Constructing relations}\label{sec:construct}

Consider $s\in C_0$ with $\nu(s)$ modular and replace $K$ with $K(s)$.
There exists an isogeny $\cE_{1,s} \to \cE_{2,s}$.
(By the definition of modular points, there also exists an isogeny $\cE_{1,s} \to \cE_{3,s}$, but the latter isogeny is not required for this step of the proof; it will be used in Section~\ref{sec:degree} below.)
Hence, for each $k\in\{1,\ldots,\ell\}$, there is an isogeny
\[\cE_{[k,1],\sigma_k^{-1}(s)}\to(\sigma^*_k\cE_1)_{\sigma^{-1}_k(s)}=\cE_{1,s}\to\cE_{2,s}=(\sigma^*_k\cE_2)_{\sigma^{-1}_k(s)}\to\cE_{[k,2],\sigma_k^{-1}(s)},\]
the outer isogenies being afforded to us, again, by \cite[Lemma 5.4]{Y(1)}. After replacing $K$ with an extension of bounded degree, we can and do assume these are $K$-isogenies.

Now, for any $v\in\Sigma_{K}$ such that $|x(s)|_v<R_v$, we have $s^\van\in U_{R_v,k}$ for some $k\in\{1,\ldots,\ell\}$ (which may depend on~$v$, of course). As in \cite[Rem. 5.6]{Y(1)}, we deduce that $\sigma^{-1}_k(s)^{\van}\in U_{R_v,1}=\Delta_v$. 
Let $e\in E(G)$ be such that $e(i)=[k,i]$ for $i\in\{1,2\}$ (such an $e$ exists by the definition of $G$). Hence, by Proposition \ref{prop:rel} applied to
\[\{C,\fC,\fG_{e(i)},C_0,\cE_{e(i),0},\fC_0,s_1,f_{e},\{\omega_{e(i)},\eta_{e(i)}\},x,F_{e(i)mn},R_v,\Delta_v:1\leq i,m,n\leq 2, v\in\Sigma_K\},\]
there exists a $v$-adic relation $Q_v$ of degree at most $2$ between the evaluations at $x(\sigma^{-1}_k(s))=x(s)$ of the $F_{e(i)mn}$ for $ i,m,n\in\{1,2\}$, with the property that $Q_v$ is not contained in the ideal $I$ of $\Qbar[X_{imn}:1\leq i,m,n\leq 2]$ generated by the elements
\[\det((X_{1mn})_{mn})-\det((X_{2mn})_{mn}).\] 

Let $\Sigma_s$ denote the set of $v\in\Sigma_K$ satisfying $|x(s)|_v<R_v$ and let 
\begin{equation*}
    Q:=\prod_{v\in\Sigma_s}Q_v.
\end{equation*}
Then $Q$ is a global relation between the evaluations at $x(s)$ of the elements of $\cG_H$, thanks to the definition of~$R_v$.

\subsubsection{Constraining the degree of~$Q$} \label{sec:degree}

Consider a place $v \in \Sigma_s \cap \Sigma_{K,f}$ and the corresponding index~$k$ such that $s^{\van}\in U_{R_v,k}$.
Write $\fs_k$ and~$\fs$ for the $\cO_K$-extensions of $s_k$ and~$s$, respectively, in $\fC'$.
Since $s$ is modular, there exists an isogeny
$\cE_{1,s} \to \cE_{3,s}$
and hence an isogeny between the Néron models $\fG_{1,\fs} \to \fG_{3,\fs}$.
Since $s^{\van}\in U_{R_v,k}$ and $U_{R_v,k}$ is a simple neighbourhood, we have $\fs(\fp_v)=\fs_k(\fp_v)$.
Therefore, we have an isogeny
\[\fG_{1,\fs_k,k_v} \cong \fG_{1,\fs,k_v} \to \fG_{3,\fs,k_v} \cong \fG_{3,\fs_k,k_v}=\bG_{m,k_v}.\]
Hence, $\cE_{b_1}=\cE_{1,s_k}$ has bad reduction at~$v$.

In other words, $v\in\Sigma_s$ implies that either $v\in\Sigma_{K,\infty}$ or $v$ is a (finite) place of bad reduction for $\cE_{b_1}$.
Therefore, the degree of $Q$ is bounded by $\newC{main-deg-mult}[K:\bQ]$ for some constant $\refC{main-deg-mult}$ independent of~$s$. Indeed, the number of places of bad reduction for $\cE_{b_1}$ over~$[\bQ(s_0):\bQ]$ is independent of~$s$. Hence, the number of places of bad reduction for $\cE_{b_1}$ over~$K$ is bounded by a constant multiple of~$[K:\bQ]$.
The number of places in $\Sigma_{K,\infty}$ is also bounded by $[K:\bQ]$.

We conclude that Theorem \ref{teo:main-ht} follows from \cite[VII, Thm.~5.2]{And89} (cf.\ \cite[Thm.~2.8]{Y(1)}) if we can show that $Q$ is nontrivial. This will be the subject of the next section.

\subsubsection{Nontriviality}\label{sec:non-triviality}
By \cite[Lemma 2.9]{Y(1)}, it suffices to show that $Q$ is a non-trivial relation between the elements of $\cG$ (as opposed to $\cG_H$). Therefore, let $J$ denote the (homogeneous in $X_{\lambda mn}$) ideal of
\[\Qbar[X][X_{\lambda mn}:\lambda\in\Lambda,\ 1\leq m,n\leq 2\}]\] 
 comprising the functional relations between the elements of $\cG$. We need to show that $Q$ does not belong to the specialisation of $J$ at $x(s)$.

 \begin{lemma}
     The ideal $J$ is prime.
 \end{lemma}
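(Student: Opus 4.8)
The plan is to show that $J$ is prime by identifying it as the kernel of a natural ring homomorphism into an integral domain. Recall that, by Lemma~\ref{lem:GF-ind-v}, the G-functions $F_{\lambda mn}$ are precisely the Taylor series at $s_1$ of the entries of the matrices $Y_{v,\lambda} = \Omega_{v,\lambda}\cdot\Omega_{v,\lambda}(s_1^\van)^{-1}$, and in particular they lie in $K[[X]]$. First I would consider the evaluation map
\[
\Qbar[X][X_{\lambda mn}:\lambda\in\Lambda,\ 1\leq m,n\leq 2] \longrightarrow \Qbar[[X]],\qquad X\mapsto X,\ X_{\lambda mn}\mapsto F_{\lambda mn},
\]
where the target is understood as the $\Qbar$-algebra generated by $X$ and the $F_{\lambda mn}$ inside $\Qbar[[X]]$ (equivalently, I can take the map into all of $\Qbar[[X]]$). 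By definition, $J$ is the set of \emph{homogeneous} polynomials in the kernel; since the kernel of a ring homomorphism into a domain is prime, and since an ideal generated by (and consisting of) homogeneous elements is prime as soon as the homogeneous elements in it form a prime ideal, it suffices to observe that the full kernel is prime and that $J$ is its homogeneous part. Concretely: $\Qbar[[X]]$ is an integral domain (it is even a domain as a power series ring over a field), so the kernel $\mathfrak{q}$ of the evaluation homomorphism is a prime ideal; and $J = \mathfrak{q} \cap (\text{homogeneous elements})$ is prime in the graded sense, hence prime as an ideal in the usual sense because it is generated by homogeneous elements lying in the prime $\mathfrak{q}$ — if $fg\in J$ with $f,g$ arbitrary, decompose into homogeneous components and use that $\mathfrak q$ is prime to conclude the top components multiply into $\mathfrak q$, then descend.

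More carefully, to handle homogeneity cleanly I would argue as follows. Grade the polynomial ring $\Qbar[X][X_{\lambda mn}]$ by total degree in the variables $X_{\lambda mn}$ only (with $X$ in degree $0$); then $J$ is by definition a homogeneous ideal for this grading, being generated by its homogeneous elements. A homogeneous ideal $\mathfrak a$ in a $\bZ_{\geq 0}$-graded ring is prime if and only if, for homogeneous $f,g$, $fg\in\mathfrak a$ implies $f\in\mathfrak a$ or $g\in\mathfrak a$. So let $\tilde{Q}_1, \tilde{Q}_2$ be homogeneous with $\tilde Q_1\tilde Q_2\in J$, i.e. $\tilde Q_1(X)(F_\bullet)\cdot \tilde Q_2(X)(F_\bullet)=0$ in $\Qbar[[X]]$. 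Since $\Qbar[[X]]$ is a domain, one of the two factors vanishes, say $\tilde Q_1(X)(F_\bullet)=0$, which says exactly $\tilde Q_1\in J$. This gives primality of $J$ directly, without needing to discuss the non-homogeneous kernel at all.

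The only genuine content here is the observation that $\Qbar[[X]]$ is an integral domain and that the defining condition of a functional relation, $\tilde Q(X)(F_1(X),\dots,F_n(X))=0$ in $\Qbar[[X]]$, is literally the condition of lying in the kernel of a ring homomorphism into that domain; everything else is the standard fact that homogeneous ideals are prime precisely when the homogeneous-element cancellation property holds. I do not expect any obstacle: there is no convergence issue (these are formal identities in $\Qbar[[X]]$), and no subtlety about which field the $F_{\lambda mn}$ live in, since they lie in $K[[X]]\subset\Qbar[[X]]$ by Lemma~\ref{lem:GF-ind-v}. If one wanted to be maximally economical, the proof is a single sentence: $J$ is the homogeneous part of the kernel of the $\Qbar$-algebra map $\Qbar[X][X_{\lambda mn}]\to\Qbar[[X]]$ sending $X_{\lambda mn}\mapsto F_{\lambda mn}$, hence prime. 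I would write it at roughly that level of brevity, perhaps spelling out the graded-primality criterion in one extra line for the reader's convenience.
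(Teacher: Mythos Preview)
Your proposal is correct and follows essentially the same approach as the paper: both identify $J$ as the homogeneous part of the kernel of the evaluation map $\Qbar[X][X_{\lambda mn}] \to \Qbar[[X]]$ sending $X_{\lambda mn} \mapsto F_{\lambda mn}$, and deduce primality from the fact that $\Qbar[[X]]$ is a domain. The paper cites the Stacks Project for the fact that the homogeneous part of a prime ideal is prime, whereas you spell out the standard graded-primality criterion directly; the content is the same.
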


 \begin{proof}
     By definition, $J$ is the homogeneous part of the kernel of the homomorphism
\[\Qbar[X][X_{\lambda mn}:\lambda\in\Lambda,\ 1\leq m,n\leq 2\}]\to\Qbar[\![X]\!]\]
defined by $X_{\lambda mn}\mapsto F_{\lambda mn}$. Since the latter is clearly prime, so too is $J$ by \cite[00JM, Lemma 10.57.7]{stacks}.
 \end{proof}

To ease notation, we set $r:=|\Lambda|$. Choose $v\in\Sigma_{K,\infty}$. The $F_{\lambda mn}$ give rise to a holomorphic function $D(0,R_v,\bC)\to\gSL^{r}_2(\bC)$, and we let $\Gamma$ denote the graph of this function. 

\begin{lemma}\label{graph-dense}
    The set $\Gamma$ is $\bC$-Zariski dense in $\bA^1\times\gSL_2^r$. 
\end{lemma}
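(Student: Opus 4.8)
The plan is to identify the Zariski closure of $\Gamma$ with the graph of the monodromy/Galois representation attached to the family, and to show this graph is everything because the relevant differential Galois group is as large as possible.

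\medskip

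\textbf{Setup of the argument.} First I would observe that the functions $F_{\lambda mn}$ are, by construction, the entries of the fundamental solution matrices $Y_{v,\lambda} = \Omega_{v,\lambda}\cdot\Omega_{v,\lambda}(s_1^\van)^{-1}$ of the Gauss--Manin connection on $H^1_{DR}(\cE_\lambda/C)$, normalised to be the identity at $s_1$. Since each $\cE_\lambda\to C$ is an elliptic scheme, the connection preserves the symplectic (cup-product) pairing up to scalar, and because the fibre at $s_1$ is sent to $I_2$, the matrices $Y_{v,\lambda}(\xi)$ in fact lie in $\SL_2$. Thus $\Gamma$ is the graph of a holomorphic map $D(0,R_v^\leq,\bC)\to \SL_2(\bC)^k$, as asserted, and its Zariski closure $\overline{\Gamma}$ is an irreducible subvariety of $\bA^1\times\SL_2^k$ dominating $\bA^1$ (the first projection is dominant since $\Gamma$ is a graph over the disc, which is Zariski dense in $\bA^1$). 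The fibre of $\overline{\Gamma}$ over a generic $\xi$ is therefore the Zariski closure in $\SL_2^k$ of the single point $(Y_{v,\lambda}(\xi))_\lambda$; but this fibre is constant in $\xi$ up to translation, and a cleaner way to phrase the claim is that the Zariski closure of the image of the full solution map is all of $\SL_2^k$. I would reduce \Cref{graph-dense} to this: it suffices to show that the differential Galois group (equivalently, the Zariski closure of the monodromy group, since these families have regular singular points and the connection is defined over a number field) of the direct sum connection $\bigoplus_{\lambda\in\Lambda} H^1_{DR}(\cE_\lambda/C)$ is all of $\SL_2^k$.

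\medskip

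\textbf{The key step: largeness of monodromy.} The main point is that the monodromy group $G\subseteq \SL_2^k$ of the family $\bigoplus_\lambda \cE_\lambda \to C$ surjects onto each factor $\SL_2$ (standard: the $j$-line family $\cE\to S$ has full $\SL_2$ monodromy, and each $\cE_\lambda$ is a pullback of it along a dominant map $C\to S$ composed with a coordinate projection, hence still has Zariski-dense, in fact $\SL_2$, monodromy), and moreover that the projections to distinct factors are "independent". For the independence I would invoke Goursat's lemma: a Zariski-closed subgroup of $\SL_2^k$ surjecting onto each factor and not equal to the whole product must, for some pair $\lambda\neq\mu$, have its image in $\SL_2\times\SL_2$ equal to the graph of an automorphism of $\SL_2$ (or $\PSL_2$). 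Such a graph would force the local systems $R^1\pi_{\lambda,*}\bC$ and $R^1\pi_{\mu,*}\bC$ to be isomorphic (up to the outer automorphism / dual), which in turn would give a fibrewise isogeny $\cE_{\lambda,\bar\eta}\to\cE_{\mu,\bar\eta}$ over the geometric generic point. But $\Lambda$ was chosen precisely as a set of representatives of the equivalence classes under "there exists an isogeny at $\bar\eta$" (\Cref{sec:reps}), so for $\lambda\neq\mu$ in $\Lambda$ no such isogeny exists --- contradiction. Hence $G=\SL_2^k$.

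\medskip

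\textbf{Finishing.} From $G=\SL_2^k$ it follows that the connection $\bigoplus_\lambda H^1_{DR}(\cE_\lambda/C)$ has differential Galois group $\SL_2^k$, and by the standard dictionary (the solution matrix generates, over the function field, a Picard--Vessiot extension whose Galois group is $\SL_2^k$) the entries $F_{\lambda mn}$ satisfy no polynomial relations over $\bC(X)$ beyond those cutting out $\SL_2^k$ --- equivalently, the map $\xi\mapsto (Y_{v,\lambda}(\xi))_\lambda$ from the disc has Zariski-dense image in $\SL_2^k$, and together with dominance over $\bA^1$ this gives $\overline{\Gamma}=\bA^1\times\SL_2^k$. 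I expect the main obstacle to be the bookkeeping in the Goursat step: one must be careful that the automorphism of $\SL_2$ appearing there (which could be inner, or the transpose-inverse outer one, or a composition) really does produce an isomorphism of local systems compatible with the cup-product pairing, so that it genuinely yields an isogeny rather than merely an abstract linear isomorphism; handling the $\PSL_2$-versus-$\SL_2$ discrepancy and the possibility that monodromy surjects only onto a finite-index subgroup of $\SL_2(\bZ)$ (which is harmless for Zariski closure) are the places where care is needed. Everything else is a routine invocation of the differential-Galois correspondence and the known big-monodromy statement for the universal elliptic family.
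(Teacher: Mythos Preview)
Your approach is correct and genuinely different from the paper's. The paper simply translates the Zariski-density claim into a transcendence degree inequality
\[
\trdeg_{\bC(C)} \bC(C)(\Omega_{v\lambda mn} : \lambda\in\Lambda,\ 1\le m,n\le 2) \ge 3k
\]
and then invokes the function-field Andr\'e--Grothendieck period conjecture (the theorem of Ayoub and Nori; the paper cites \cite[Thm.~1.1]{BT:GPC}) as a black box to obtain it. No monodromy computation or Goursat argument appears.

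Your route---computing the differential Galois group directly as $\gSL_2^k$ via Goursat, big monodromy for the $j$-line family, and the non-isogeny condition baked into the definition of $\Lambda$---is the more classical and more self-contained argument. It avoids the heavy theorem of Ayoub, at the cost of the bookkeeping you flag. That bookkeeping is genuine but standard: the monodromy representation is defined over $\bQ$, so the Goursat graph gives a one-dimensional space of monodromy-invariants in $\Hom(V_\lambda,V_\mu)$ over $\bQ$; by Deligne's fixed-part theorem this is a sub-Hodge structure, and being one-dimensional it must be of type $(0,0)$, hence corresponds to an isogeny. Your worry about the outer automorphism is unfounded over $\bC$ (all automorphisms of $\gSL_2$ are inner), and the $\gSL_2$/$\gPGSp$ discrepancy is handled by the same Schur/dimension argument. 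One small point: your final sentence (``Zariski-dense image in $\gSL_2^k$ plus dominance over $\bA^1$'') is not quite enough as stated; what actually finishes is the transcendence-degree count $\trdeg_\bC \bC(X, F_{\lambda mn}) = 3k+1$, which forces $\overline\Gamma$ to have full dimension in $\bA^1\times\gSL_2^k$.
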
 

\begin{proof}
Denote by $\Omega_v:\Delta_v\to\rM_{2g}(\bC)$ the function giving rise to the $F_{\lambda mn}$, as in Section~\ref{sec:GF-ab-sch}, for some choice of horizontal basis of $H^1_{DR}(\cE_\lambda/C)^\van(\Delta_v)$.
Write $\Omega_{v\lambda mn}$ for its components.

By the Legendre period relation, $\det\Omega_v$ is constant.
It follows that $\Gamma$ is indeed contained in $\bA^1\times\gSL_2^r$.

Therefore, it suffices to show that
\[\trdeg_{\bC(X)}\bC(X)(F_{\lambda mn}:\lambda\in\Lambda,\ 1\leq m,n\leq 2)\geq \dim(\gSL_2^r) = 3r.\]
This is clearly implied by 
\[\trdeg_{\bC(C)}\bC(C)(\Omega_{v\lambda mn}:\lambda\in\Lambda,\ 1\leq m,n\leq 2)\geq 3r.\]
 
As in Remark \ref{rem:periods}, if we choose suitable horizontal bases for $H^1_{DR}(\cE_\lambda/C)^\van(\Delta_v)$, then the $\Omega_{v\lambda mn}$ are the entries of a period matrix for $\cE_\lambda\to C$. Therefore, the geometric Andr\'e--Grothendieck period conjecture, proved in this case by Ayoub \cite{Ayoub:GPC} and Nori (unpublished)---see \cite[Thm.~1.1]{BT:GPC} and the remarks thereafter---yields exactly this inequality. Indeed, by \cite[Cor.~4.17]{BT:GPC} the relative motivic Galois group $\gG_{\rm mot}(M/\bC)$ is always equal to the Zariski closure of the monodromy of the Betti local system of~$M$. In our case, the latter is $\gSL_2^r$ because the $\cE_\lambda$ are not generically CM or generically isogenous, so $\dim \gG_{\rm mot}(M/\bC) = 3r$.
\end{proof}

By definition, $V(J)$ is the cone over the Zariski closure of~$\Gamma$. Thus,  Lemma~\ref{graph-dense} implies that 
\[V(J)=\bA^1\times\{(g_1,\ldots,g_r)\in M_2(\bC)^r:\det(g_1)=\ldots=\det(g_{r})\}.\]
Since $J$ is prime and therefore radical, we deduce that $J$ is the ideal of
\[ \Qbar[X][X_{\lambda mn} : \lambda\in\Lambda,\ 1\leq m,n\leq 2] \]
generated by the elements 
\begin{equation*}
    \det((X_{\lambda_1 mn})_{mn})-\det((X_{\lambda_2 mn})_{mn}) 
\end{equation*}
for $\lambda_1,\lambda_2\in\Lambda$. The specialisation $J_0$ of $J$ at $x(s)$ is therefore the ideal of 
\[\Qbar[X_{\lambda mn}:\lambda\in\Lambda,\ 1\leq m,n\leq 2]\] 
generated by these elements. 

By faithful flatness of a polynomial ring over its ring of coefficients, we conclude that $J_0 = J \cap \Qbar[X_{\lambda mn}:\lambda\in\Lambda,\ 1\leq m,n\leq 2]$, hence $J_0$ is also prime. Therefore, it suffices to show that $Q_v\notin J_0$ for all $v\in\Sigma_s$.
This follows from the fact that each $Q_v$ is not contained in the ideal $I$ of Proposition \ref{prop:rel} (again by faithful flatness of a polynomial ring over its ring of coefficients).
\qed

\subsection{Proof of Theorem \ref{teo:super-ht}}
\label{proof:super-ht}

Consider the situation in Theorem \ref{teo:super-ht}. In particular, let $s_0:=(b_1,b_2,b_3)\in C$ be the modular point referred to in the statement, and let $K$ denote a number field over which $C$ and $s_0$ are defined. Again, to match the notation of the preceding sections, we relabel $C_0:=C$ and $C:=\overline{C}$.

\subsubsection{Reductions}

After removing at most finitely many points from $C_0$, we can assume that $C_0$ is contained in $S^3_K$. Thus, after possibly replacing $K$ with a finite extension and $C$ with a finite \'etale cover, we obtain a covering datum \[\cD:=(\fG_1,\fG_2,\fG_3,\fC',{\fC},\tilde{C},\tilde{C}_0,\nu,x,s_1,\dots,s_\ell,\sigma_1,\dots,\sigma_\ell)\] for $(C,s_0)$, and we fix a Weil height $h$ on $\tilde{C}$. As before, it suffices for us to show that there exist constants $\newC{super-Weil-ht-mult}$, ${\newC{super-Weil-ht-exp}}$ and ${\newC{super-Weil-ht-exp2}}$ such that, for any modular point $s:=(x_1,x_2,x_3)\in C$ of supersingular exponent $\delta$, and any $\tilde{s}\in\nu^{-1}(s)\subset\tilde{C}_0$, we have 
\[h(\tilde{s})\leq\refC{super-Weil-ht-mult}[K(s):K]^{\refC{super-Weil-ht-exp}}\Delta(s)^{{\refC{super-Weil-ht-exp2}}\delta}.\]

\subsubsection{Setup}
We copy the notations and repeat the constructions of Section \ref{sec:pullback-reps}, this time with $I:=\{1,2,3\}$. We construct a graph $G$ as in Section \ref{sec:isogeny-data}, but this time with edges between $[k,j]$ and $[k,j+1]$ for $j\in\{1,2\}$. Thus, to each $e\in E(G)$, we assign this $j:=j(e)$ and, then, $e(i):=[k(e),j(e)+(i-1)]$. Copying the rest of Section \ref{sec:isogeny-data}, we obtain an isogeny datum
\[\{C,\fC,\fG_{e(i)},C_0,\cE_{e(i),0},\fC_0,s_1,f_{e},\{\omega_{e(i)},\eta_{e(i)}\},x,F_{e(i)mn},R_v,\Delta_v:1\leq i,m,n\leq 2, v\in\Sigma_K\}\]
for each $e\in E(G)$.

\subsubsection{Constructing relations}

Let $s\in C_0$ with $\nu(s)=(x_1,x_2,x_3)$ modular of supersingular exponent $\delta$. Replace $K$ with $K(s)$ and write $\fs_k$ and~$\fs$ for the $\cO_K$-extensions of $s_k$ and $s$, respectively, in $\fC'$.
Note that, for any $v\in\Sigma_{K}$, the condition $|x(s)|_v<R_v$ implies $s^\van\in U_{R_v,k}$ for some $k\in\{1,\ldots,\ell\}$. If $v\in\Sigma_{K,f}$, then, since $U_{R_v,k}$ is a simple neighbourbood, we have
\[\fG_{i,\fs,k_v} \cong \fG_{i,\fs_k,k_v} \]
for all $i\in I$, which implies that the $j$-invariants $x_i$ and $b_i$ are congruent modulo the maximal ideal of $\cO_{K_v}$, which is to say, $|x_i-b_i|_v<1$.

Define $\Sigma_{s}\subset\Sigma_{K}$ to be the set of $v\in\Sigma_K$ satisfying $|x(s)|_v<R_v$. In particular, $\Sigma_s\cap\Sigma_{K,f}$ is contained in the set of $v\in\Sigma_{K,f}$ for which \eqref{eqn:supersing-close-condition} holds. By repeating the arguments of Section \ref{proof:main-ht}, invoking Proposition \ref{prop:rel2} in the place of Proposition \ref{prop:rel}, we obtain a global relation between the evaluations at $x(s)$ of the elements of $\cG_H$, which is equal to the product of a relation of degree at most $4$ (dealing with the good ordinary places) and a relation
\begin{equation*}
    Q^{\rm nord}:=\prod_{v\in\Sigma^{\rm nord}_{s}}Q_v
\end{equation*}
where each $Q_v$ is a relation of degree at most $2$ and $\Sigma^{\rm nord}_{s}\subset\Sigma_{s}$ is the complement in $\Sigma_{s}$ of the set of finite places for which the $\cE_{i,s}$ have good ordinary reduction. The fact that $Q$ is nontrivial is proved exactly as in Section \ref{sec:non-triviality}, hence, we omit it.

\subsubsection{Constraining the degree}
\label{sec:degree-super-ht}

It follows that there exists a constant $\newC{super-deg-mult}$ independent of $s$ such that the degree of $Q$ is bounded by $\refC{super-deg-mult}([K:\bQ]+|\Sigma^{\rm sup}_{s}|)$, where $\Sigma^{\rm sup}_{s}$ is the set of finite places in $\Sigma_{s}$ for which the elliptic curve with $j$-invariant $b_1$ has supersingular reduction. By assumption, we have
\[|\Sigma^{\rm sup}_{s}|\leq [K:\bQ]\Delta(s)^{\delta}\]
and so the result follows from \cite[Thm. 5.2]{And89}. 
\qed

\begin{remark}\label{rem:constant}
   From \cite[VII, Thm.~5.2]{And89}, noting that the $\Qbar(X)$-linear span of~$\cG_H$ is closed under differentiation, we see that Theorem \ref{teo:super-ht} holds for any 
   \[\refC{super-ht-exp2}>3(|\cG_H|-1)=3(4|\Lambda|+1-1)=12|\Lambda|.\]
   In particular, since $|\Lambda|\leq 3\ell$, we deduce that Theorem \ref{teo:super-ht} holds for any $\refC{super-ht-exp2}>36\ell$.
\end{remark}

\section{Proving Theorems \ref{teo:main} and \ref{teo:super}}\label{sec:derivations}

The proofs of Theorems \ref{teo:main} and \ref{teo:super} follow the Pila--Zannier strategy. In fact, following word-for-word the proof of \cite[Prop. 5.1]{HP12}, it suffices to obtain the following large Galois orbit results, which serve as substitutes for \cite[Lemma 4.2]{HP12}.

\begin{theorem}\label{teo:galois-main}
    Let $C$ satisfy the conditions of Theorem \ref{teo:main}. There exist constants $\newC{main-galois-mult},\newC{main-galois-exp1}>0$ such that, for any modular point $s=(x_1,x_2,x_3)\in C$, we have
    \[ [\bQ(x_1,x_2,x_3):\bQ]\geq\refC{main-galois-mult}\Delta(s)^{\refC{main-galois-exp1}}.\]
\end{theorem}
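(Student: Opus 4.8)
The idea is to bound $\Delta(s)=\max\{M,N\}$ from above by a fixed power of $[\bQ(x_1,x_2,x_3):\bQ]$, by feeding the height bound of Theorem~\ref{teo:main-ht} into isogeny estimates of Masser--W\"ustholz type; the asserted inequality is then obtained by rearranging. I first dispose of the degenerate cases. The finitely many points of $C$ with a coordinate in $\{0,1728\}$ contribute only finitely many modular points, for which $M$ and $N$ are bounded; and if one coordinate is a singular modulus then --- since $E_1,E_2,E_3$ are mutually isogenous --- all three are, so that $(x_1,x_2,x_3)$ is a special point of $Y(1)^3$ on $C$. As $C$ is not contained in a proper special subvariety, the Andr\'e--Oort conjecture \cite{PST+} forces there to be only finitely many such points on $C$, so again $M$ and $N$ are bounded. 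In all these cases the inequality holds once $\refC{main-galois-mult}$ is chosen small enough, so we may assume $x_1,x_2,x_3$ are non-singular and different from $0$ and $1728$.

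Set $L:=\bQ(x_1,x_2,x_3)$ and fix models $E_i$ over $\bQ(x_i)\subseteq L$ with $j(E_i)=x_i$. Since $\Phi_M(x_1,x_2)=0$ and $E_1,E_2$ are non-CM, $\Hom_{\Qbar}(E_1,E_2)$ is a rank-one free $\bZ$-module, generated by a primitive isogeny $\phi_0$; primitivity forces $\ker\phi_0$ to be cyclic of order $M$, and every isogeny $E_1\to E_2$ is an integer multiple of $\phi_0$, hence of degree at least $M$. Moreover $\phi_0$ is defined over an extension $L'/L$ with $[L':L]\leq 2$: for $\sigma\in\Gal(\Qbar/L)$ the conjugate ${}^\sigma\phi_0$ is again an isogeny $E_1\to E_2$, so ${}^\sigma\phi_0=\pm\phi_0$ (using $\Aut(E_i)=\{\pm 1\}$), and $\sigma\mapsto{}^\sigma\phi_0/\phi_0$ is a quadratic character. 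Applying the Masser--W\"ustholz isogeny theorem --- or the explicit refinement of Gaudron--R\'emond --- to $E_1$ and $E_2$ over $L'$ produces an isogeny between them whose degree is at most a fixed polynomial $P$ in $[L':\bQ]\leq 2[L:\bQ]$ and $\max\{1,h_F(E_2)\}$, with $h_F$ the stable Faltings height; since that degree is at least $M$, we obtain $M\leq P([L:\bQ],\max\{1,h_F(E_2)\})$, and likewise for $N$ (replacing $E_2$ by $E_3$ and $M$ by $N$, using $\Phi_N(x_2,x_3)=0$).

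It remains to compare heights. One has $h_F(E_2)\ll 1+h(x_2)\leq 1+h(x_1,x_2,x_3)$ by the standard comparison between the stable Faltings height and the naive height of the $j$-invariant, while Theorem~\ref{teo:main-ht} gives $h(x_1,x_2,x_3)\leq\refC{main-ht-mult}[L:\bQ]^{\refC{main-ht-exp1}}$. Substituting into the bounds for $M$ and $N$ yields $\max\{M,N\}\leq c'[L:\bQ]^{c''}$ for constants $c',c''>0$ depending only on $C$, which rearranges to the inequality in the statement for a suitable choice of $\refC{main-galois-mult}$ and $\refC{main-galois-exp1}$.

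The genuine arithmetic content is already in Theorem~\ref{teo:main-ht}; the argument above is the routine ``isogeny estimate plus height comparison'' translation, and its only delicate points are the uniqueness up to $\pm 1$ of the cyclic isogeny realising $M$ (which is what turns an upper bound for \emph{some} isogeny degree into an upper bound for $M$ itself) and the bookkeeping needed to extract genuine polynomial dependence on $[L:\bQ]$ from the isogeny theorem and from the Faltings/naive-height comparison.
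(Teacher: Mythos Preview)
Your proposal is correct and is essentially the approach the paper has in mind: the paper does not spell out the proof but refers to \cite[Prop.~5.15]{Y(1)} and \cite[Lemma~4.2]{HP12}, whose argument is precisely ``isogeny estimate (Gaudron--R\'emond) $+$ Faltings/naive height comparison $+$ the height bound of Theorem~\ref{teo:main-ht}'', together with the observation that for non-CM isogenous elliptic curves the minimal isogeny degree coincides with~$M$. Your detour through the quadratic extension $L'/L$ is harmless but unnecessary, since the isogeny theorem only needs the field of definition of the curves, not of an existing isogeny.
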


\begin{theorem}\label{teo:galois-super}
    Let $C$ and $(b_1,b_2,b_3)$ be as in Theorem \ref{teo:super} and let $\refC{super-ht-exp2}$ be the constant afforded to us by Theorem \ref{teo:super-ht}. For any $\delta<(2\refC{super-ht-exp2})^{-1}$ there exist constants $\newC{super-galois-mult},\newC{super-galois-exp1}>0$ such that, for any modular point $s=(x_1,x_2,x_3)\in C$ of supersingular exponent $\delta$, we have
    \[ [\bQ(x_1,x_2,x_3):\bQ]\geq\refC{super-galois-mult}\Delta(s)^{\refC{super-galois-exp1}}.\]
\end{theorem}

The proofs of these theorems follow almost exactly the proof of \cite[Prop.~5.15]{Y(1)}, which is essentially an extract from the proof of \cite[Lemma~4.2]{HP12}. The key input is a Masser--W\"ustholz-type isogeny estimate. For Theorem~\ref{teo:galois-super}, we require the version of Gaudron--R\'emond \cite[Thm.~1.4]{GR14EC} (it is the exponent~$2$ in \cite[Thm.~1.4]{GR14EC} which leads to the condition $\delta<(2\refC{super-ht-exp2})^{-1}$ in \cref{teo:galois-super}).

\appendix
\section{Rigid analytic neighbourhoods}

We establish a version of \cite[Lemma~5.5]{Y(1)}, written in terms of rigid geometry instead of only $\bC_p$-points, and thereby give a rigid analytic proof of \cite[Lemma~5.5]{Y(1)}.
Along the way, we establish a version which talks about the neighbourhood of a single point where $x$ has a simple zero, instead of requiring all zeroes of~$x$ to be simple.
For convenience, we also prove that the neighbourhoods we construct can be chosen to be simple, as defined in Section~\ref{sec:simp-n}.

First we check that an \'etale morphism of rigid spaces restricts to an isomorphism on some open neighbourhood of each $K$-point.
(Note that it is not true in general that \'etale morphisms are local isomorphisms---the open neighbourhoods on which they are isomorphisms need not form an admissible covering.)

\begin{lemma} \label{rigid-etale-local-isomorphism}
Let $K$ be a non-archimedean field of characteristic zero.
Let $X$ be a quasi-separated rigid space over~$K$.
Let $f \colon X \to \bar D(0,r,K)$ be a finite \'etale morphism from $X$ to the closed disc of radius $r$.
Suppose there is a $K$-point $x_0 \in X(K)$ such that $f(x_0)=0$.
Then there exists an open subspace $Z \subset X$ containing~$x_0$ and a positive real number $r' \leq r$ such that $f|_Z \colon Z \to \bar D(0,r',K)$ is an isomorphism of rigid spaces.
\end{lemma}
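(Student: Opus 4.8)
The plan is to reduce to the local ring at $x_0$ and then spread out. First I would note that, since $f$ is finite, $X$ is affinoid, say $X = \Spf(A)^{\rig}$ with $A$ finite over $\tatealgebra{K}{r^{-1}T}$; write $B = \tatealgebra{K}{r^{-1}T}$ for the base. Finiteness plus \'etaleness means $A$ is a finite \'etale $B$-algebra, so it is projective of some rank $d$ as a $B$-module. The $K$-point $x_0$ corresponds to a maximal ideal $\fm_0 \subset A$ with $A/\fm_0 = K$ lying over the origin of $\bar D(0,r,K)$; because $A/B$ is \'etale and the residue field extension is trivial, the fibre $A \otimes_B (B/(T))$ is a finite \'etale $K$-algebra that splits off a copy of $K$ at $\fm_0$, i.e.\ $A \otimes_B K \cong K \times A'$ as rings. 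By idempotent lifting (Hensel / the fact that $B$ is $T$-adically separated, or directly that $\Spec A \to \Spec B$ is \'etale and we are looking \'etale-locally on the base), there is $g \in B$, a power of $T$ up to a unit — concretely one can take $g = T^{k}$ after shrinking — no: more carefully, there is a Zariski-open neighbourhood of the origin in $\Spec B$ over which $A$ decomposes as a product with one factor isomorphic to the base. Translating this into rigid geometry: after replacing $r$ by a smaller $r'$, so that $\bar D(0,r',K)$ is the preimage of such a Zariski neighbourhood, $A \langle r'^{-1} T \rangle$ splits as $\tatealgebra{K}{r'^{-1}T} \times (\text{rest})$, and the first factor is the open subspace $Z$ we want, with $f|_Z$ an isomorphism onto $\bar D(0,r',K)$.

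The cleanest way to organise this: consider the diagonal $\Delta \colon X \to X \times_{\bar D(0,r,K)} X$. Since $f$ is \'etale (hence unramified), $\Delta$ is an open immersion; since $f$ is also finite (hence separated), $\Delta$ is a closed immersion; so $\Delta$ identifies $X$ with an open-and-closed subspace of the fibre product. Now $x_0$ gives a point $(x_0,x_0)$ on the diagonal, and the "other sheets" through $0$ give points of the fibre product not on the diagonal. The key geometric fact is that these finitely many bad points — images under the two projections of $(x_0, x_0)$ versus $(x_0, x_i)$ for the other preimages $x_i$ of $0$ — are closed, so one can find $r' \le r$ small enough that $\bar D(0,r',K)$ misses the images of the non-diagonal components of $f^{-1}(0) \times f^{-1}(0)$. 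Over $\bar D(0,r',K)$, the component of $X$ through $x_0$ (i.e.\ the connected component, or better the clopen piece cut out by the idempotent) maps isomorphically, because it is finite \'etale of degree $1$ onto a connected normal (indeed smooth) base: a finite \'etale cover of degree $1$ is an isomorphism.

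The main obstacle I expect is the bookkeeping needed to legitimately pass from "\'etale-local on the base" statements (where everything is easy: \'etale covers split after an \'etale base change, and over the origin we have a genuine splitting) to an honest open subdisc $\bar D(0,r',K)$ of the base over which the splitting holds. In the scheme-theoretic world one would just localise; here one must check that shrinking the radius realises the relevant Zariski-localisation of $\Spec \tatealgebra{K}{r^{-1}T}$, using that the maximal ideals of $\tatealgebra{K}{r^{-1}T}$ in the region $|T| \le r'$ are exactly those of $\tatealgebra{K}{r'^{-1}T}$, together with the fact that $B$ is a PID (or at least that its Jacobson spectrum is well understood) so the support of the "rest" factor is a finite set of points bounded away from $0$ in absolute value. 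Once that is pinned down, the statement "finite \'etale of degree one over a connected base is an isomorphism" finishes it. The characteristic-zero and quasi-separatedness hypotheses are there only to keep $X$ well behaved (so that the relevant rings are Noetherian and the fibre product exists as a rigid space); I would invoke them at the start and not dwell on them.
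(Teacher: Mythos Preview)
Your approach is different from the paper's and, as written, has a genuine gap. The key claim ---that the idempotent in $A/TA$ extends to a splitting of $A$ over some \emph{Zariski}-open neighbourhood of the origin in $\Spec B$, so that shrinking the radius merely ``realises the relevant Zariski-localisation''--- is false. Take $K=\bQ_2$, $B=\tatealgebra{K}{T}$ (with $r<1$), and $A=B[Y]/(Y^2-(1+T))$. This is finite \'etale over $B$ (the discriminant $4(1+T)$ is a unit), the fibre over $T=0$ splits as $K\times K$, and yet $A$ is a domain: there is no idempotent over any Zariski-open of $\Spec B$. The splitting appears only over a strictly smaller \emph{analytic} disc, namely where the binomial series for $\sqrt{1+T}$ converges $2$-adically. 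So the ``support of the rest factor is a finite set of points bounded away from $0$'' picture is wrong, and the diagonal argument has the same defect: the off-diagonal clopen piece $W\subset X\times_{\bar D}X$ surjects onto $\bar D$ whenever $\deg f>1$, so no amount of shrinking avoids its image.

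What is missing is an honest analytic input: either a rigid implicit function theorem (Newton's method applied to a monic generator $P(X,T)$ of $A$ over $B$, showing the simple root at $T=0$ extends to a convergent $g(T)$ on some $\bar D(0,r',K)$), or the route the paper actually takes. The paper base-changes to the completed algebraic closure $L$, invokes L\"utkebohmert's theorem that finite \'etale covers of a closed disc over $L$ split completely over a smaller disc, and then descends: the connected component $Z$ of $f^{-1}(\bar D(0,r',K))$ through $x_0$ stays connected over $L$ (because it has a $K$-point), hence equals one of the split sheets, hence $f_L|_{Z_L}$ is an isomorphism, hence so is $f|_Z$ by faithfully flat descent. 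Your strategy is salvageable if you replace the Zariski-localisation step with an explicit analytic Hensel lemma, but that lemma is itself the nontrivial content, not bookkeeping.
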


\begin{proof}
Since $f$ is \'etale, it is flat, so its image is open in $D(0,r,K)$.
In particular, the image of~$f$ contains some disc $D(0,R,K)$.
So, after replacing $X$ by $f^{-1}(D(0,R,K))$ and $r$ by~$R$, we may assume that $f$ is surjective (i.e.\ it is a finite \'etale covering).

Let $L$ denote the completion of the algebraic closure of~$K$.
By \cite[Thm.~2.1]{Lut93}, after extending scalars to~$L$, the finite \'etale covering $f_L \colon X_L \to \bar D(0,r,L)$ splits over~$\bar D(0,r',L)$ for some $r'$ with $0 < r' \leq r$.
In other words, letting $Y = f^{-1}(\bar D(0,r,k))$, we have that $Y_L$ is the disjoint union of subspaces $Y_1, \dotsc, Y_\ell$ such that $f_L|_{Y_i} \colon Y_i \to \bar D(0,r',L)$ is an isomorphism for each~$i$.

Let $Z$ denote the connected component of~$Y$ which contains~$x_0$.
Since $Z$ is connected and it has a $K$-point~$x_0$, $Z_L$ is connected \cite[Thm.~3.2.1]{Con99}.
Hence $Z_L$ is a connected component of~$Y_L$, that is, $Z_L = Y_i$ for some~$i$.
Thus $f_L|_{Z_L} \colon Z_L \to \bar D(0,r',L)$ is an isomorphism, so $f|_Z \colon Z \to \bar D(0,r',k)$ is an isomorphism \cite[Thm.~A.2.4]{Con06}.
\end{proof}

Henceforth, let $C$ denote a smooth projective geometrically irreducible algebraic curve over a number field~$K$, and let $x \in K(C)$ be a rational function. Let $\fC$ be a regular projective $\cO_K$-model of $C$.

\begin{proposition} \label{rigid-coordinate-disc}

Suppose that $s_0 \in C(K)$ is a simple zero of~$x$.

For each $v\in\Sigma_{K,f}$, there exists a real number $r_v > 0$ and a simple open subspace $U_v \subset C^\van$ with the following properties:
\begin{enumerate}[(i)]
\item $r_v \geq 1$ for almost all $v\in\Sigma_{K,f}$;
\item $s_0^\van \in U_v$;
\item for each $v\in\Sigma_{K,f}$,  the morphism $x^\van$ restricts to an isomorphism of rigid spaces from $U_v$ to the rigid open disc $D(0,r_v,K_v)$;
\item for every rational function $f \in K(C)$ which is regular at~$s_0$, if $\hat f \in \powerseries{K}{X}$ denotes the Taylor series of~$f$ around~$s_0$ in terms of the local parameter~$x$, then, for all $s \in U_v$ satisfying $\abs{x^\van(s)} < R(\hat f^\van)$, we have $\hat f^\van(x^\van(s)) = f^\van(s)$.
\end{enumerate}
\end{proposition}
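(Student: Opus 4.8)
The plan is to deduce this statement from Lemma~\ref{rigid-etale-local-isomorphism}, together with the standard comparison between algebraic and analytic Taylor expansions. First I would reduce to the situation where $x$ is finite \'etale onto a small disc. Since $s_0$ is a simple zero of $x$, the morphism $x \colon C \to \bP^1$ is unramified at $s_0$; shrinking to an affine neighbourhood and using that $\bP^1$ has $\infty$ as a $K$-point, I can (after possibly inverting $x$, i.e. replacing $\bP^1$ by its chart around $0$) arrange that $x$ restricts to a \emph{finite} \'etale morphism $C^\circ \to \bA^1_K \setminus Z$ for some finite $K$-set $Z$ not containing $0$, with $s_0$ the unique point of $C^\circ$ above $0$ after further shrinking. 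Passing to $v$-adic analytifications and restricting to the preimage of a small closed disc $\bar D(0,\rho,K_v)$ avoiding $Z^\van$, I obtain a finite \'etale morphism of rigid spaces to which Lemma~\ref{rigid-etale-local-isomorphism} applies: this yields an open $Z_v \ni s_0^\van$ and $r_v > 0$ with $x^\van|_{Z_v} \colon Z_v \xrightarrow{\sim} \bar D(0,r_v,K_v)$ an isomorphism, and I then take $U_v$ to be the preimage of the \emph{open} disc $D(0,r_v,K_v)$, giving (ii) and (iii).

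For (i), I would argue by spreading out: choose a smooth affine model $\fC^\circ$ of $C^\circ$ over $\cO_{K}[1/M]$ for suitable $M$, such that $x$ extends to a finite \'etale morphism $\fC^\circ \to \Spec\cO_K[1/M][X]$ and $s_0$ extends to a section $\Spec\cO_K[1/M] \to \fC^\circ$ lying over $X = 0$. For every $v$ not dividing $M$, the reduction of $x$ at $v$ is finite \'etale in a Zariski neighbourhood of the reduction of $s_0$, and $s_0$ reduces to a smooth point of the special fibre lying over $0$; by the theory of formal/rigid tubes (Lemma~\ref{rem:unit-disc} and the Hensel-style argument behind Lemma~\ref{rigid-etale-local-isomorphism} in the case of good reduction), $x^\van$ already restricts to an isomorphism from the residue tube of $s_0^\van$ onto the unit disc $D(0,1,K_v)$. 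So one may take $r_v \geq 1$ for all but the finitely many $v \mid M$, establishing (i).

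Property (iv) is then a formal consequence. Let $f \in K(C)$ be regular at $s_0$, so $f \in \cO_{C,s_0}$, and let $\hat f \in \powerseries{K}{X}$ be its Taylor series with respect to the local parameter $x$, as in Section~\ref{sec:TS}. Both $\hat f^\van(x^\van(-))$ and $f^\van(-)$ are rigid analytic functions on the subspace $U_v^{(f)} := (x^\van|_{U_v})^{-1}\bigl(D(0,R(\hat f^\van),K_v) \cap D(0,r_v,K_v)\bigr)$, which is a (non-empty, connected) rigid open disc containing $s_0^\van$; indeed $x^\van|_{U_v}$ identifies it with a disc around $0$, on which $\hat f^\van$ is by definition the convergent sum of the series. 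Their germs at $s_0^\van$ agree, because both are the image of $f$ under the composite $\cO_{C,v,s_0} \hookrightarrow \hat\cO_{C,v,s_0} \cong \powerseries{K_v}{X}$ appearing in the definition of the Taylor series: on the algebraic side this is the map $T$ of Section~\ref{sec:TS} base-changed to $K_v$, and on the analytic side it is the Taylor expansion of $f^\van$ at $s_0^\van$ in the coordinate $x^\van$, and these coincide because $\cO_{C,s_0} \to \cO_{C,v,s_0}$ is compatible with completions. Since a rigid analytic function on a connected disc is determined by its germ at any point (rigid GAGA / the identity principle for convergent power series on a disc), the two functions agree on all of $U_v^{(f)}$, which is exactly (iv).

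The main obstacle is the passage from the \'etale-local isomorphism of Lemma~\ref{rigid-etale-local-isomorphism}, which is a purely local statement at a single point, to the uniform bound $r_v \geq 1$ for almost all $v$ in (i): this requires a genuine good-reduction/spreading-out argument rather than the abstract splitting result, since the radius $r'$ produced by \cite[Thm.~2.1]{Lut93} is not controlled. The remaining points are either immediate from Lemma~\ref{rigid-etale-local-isomorphism} (points (ii),(iii)) or are routine once the identity principle for rigid functions on a disc is invoked (point (iv)).
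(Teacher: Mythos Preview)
Your proposal is correct and follows essentially the same approach as the paper: use Lemma~\ref{rigid-etale-local-isomorphism} at the finitely many bad places, use a spreading-out/integral-model argument to get $r_v=1$ at all other places (via the fact that an \'etale map induces an isomorphism on formal completions, hence on residue tubes), and deduce (iv) from the agreement of algebraic and analytic Taylor series together with the identity principle on a disc. The only differences are cosmetic: the paper works with a regular projective $\cO_K$-model and the \'etale locus of the extended morphism $\xi\colon\fC\to\bP^1_{\cO_K}$, whereas you spread out a finite \'etale affine map; and your remark that one can further shrink $C^\circ$ so that $s_0$ is the \emph{unique} point above $0$ is both unnecessary (Lemma~\ref{rigid-etale-local-isomorphism} does not require it) and in tension with preserving finiteness, so it should simply be dropped.
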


\begin{proof}
Since $x \colon C \to \bP^1_K$ is a non-constant morphism between integral curves, it is flat.
By hypothesis, $x$ is unramified at~$s_0$.
Thus, $x$ is étale at~$s_0$.

By \cite[Lem. 6.2]{LGO}, there exists a regular projective $\cO_K$-model $\fC'$ and a morphism $f:\fC'\to\fC$ such that $x$ extends to a morphism $\xi:\fC'\to\bP^1_{\cO_K}$. Since $\fC' \to \Spec(\cO_K)$ is flat (by the definition of an~$\cO_K$-model), $\xi$ is étale at~$s_0$ by \cite[Rmk.~17.8.3]{EGAIV4}.
It follows that $\xi$ is étale on a non-empty Zariski open neighbourhood of~$s_0$ in $\fC'$. 
Hence, there exists a dense open subset $\fV \subset \Spec(\cO_K)$ such that $\xi$ is étale on $\fs_0(\fV)$, where $\fs_0 \colon \Spec(\cO_K) \to \fC'$ is the $\cO_K$-extension of $s_0$.

\smallskip

We shall give two constructions of simple open subspaces $U_v \subset C^\van$ satisfying (ii) and (iii), depending on whether $\fp_v$ lies in~$|\fV|$ or not.
The construction for primes in~$|\fV|$ gives $r_v=1$, while the construction for other primes may give $r_v < 1$.
This is sufficient to ensure that (i) holds.

\smallskip


First consider $v\in\Sigma_{K,f}$ such that $\fp:=\fp_v \in |\fV|$.
Let $\fC'_\fp$ denote the fibre above~$\fp:=\fp_v$ of $\fC' \to \Spec(\cO_K)$.
Let $\hat{\fC'}_{\fC'_\fp}$ and $\hat{\fC'}_{\fs_0(\fp)}$ denote the formal completions of $\fC'$ along $\fC'_\fp$ and $\fs_0(\fp)$ respectively.
Define $\hat\fC_{\fC_\fp}$ analogously.
Since $\fC' \to \Spec(\cO_K)$ and $\fC \to \Spec(\cO_K)$ are proper, the open immersions $\hat{\fC'}_{\fC'_\fp}^\rig \to \hat{\fC}_{\fC_\fp} \to C^\van$ are isomorphisms.

Let $U_v:=(\hat{\fC}')_{\fs_0(\fp)}^\rig$. By \cite[Prop.~0.2.7]{Berthelot96}, $U_v$ is a simple open subspace of $C^\van$ with respect to $\fC'$. The existence of the following commutative diagram implies that $U_v$ is a simple open subspace of $C^\van$ with respect to $\fC$: 
\[ \begin{tikzcd}
    U                  \ar[r, hook]
  & (\hat{\fC}')_{\fC'_{k_v}}^\rig \ar[d, "\red'_{v}"] \ar[r, "\cong"]
  & \hat{\fC}_{\fC_{k_v}}^\rig \ar[d, "\red_{v}"]
\\& \fC'_{k_v}        \ar[r]
  & \fC_{k_v}
\end{tikzcd} \]
Indeed, it is immediate that $U_v$ satisfies properties (1) and (2) in the definition of simple open subspace with respect to~$\fC$.
Property (3) follows from the fact that the value of $\red_v$ on $U$ is equal to the image of $f(\fs_0(\fp))$ in $\fC_{k_v}$, which is a smooth point by \cite[Exercise~4.3.25(c)]{Liu06}.

Let $\hat{\bP}^1_{\fz_0(\fp)}$ denote the formal completion of $\bP^1_{\cO_K}$ along $ \fz_0(\fp) $, where $\fz_0 \colon \Spec(\cO_K) \to \bP^1_{\cO_K}$ denotes the zero section.
Since $\xi$ is étale at $\fs_0(\fp)$, it induces an isomorphism of formal schemes $\hat{\xi}_{\fs_0(\fp)} \colon \hat{\fC}'_{\fs_0(\fp)} \to \hat{\bP}^1_{\fz_0(\fp)}$.
Applying Berthelot's rigid generic fibre functor, we obtain an isomorphism of rigid spaces $\hat{\xi}_{\fs_0(\fp)}^\rig \colon U_v=(\hat{\fC}')_{\fs_0(\fp)}^\rig \to \hat{\bP}^{1,\rig}_{\fz_0(\fp)}$.
Since $\hat{\bP}^{1,\rig}_{\fz_0(\fp)} = \Spf(\powerseries{\cO_{K,\fp}}{X}, (\fp, X))^\rig$ is the rigid open unit disc, we conclude that $U_v$ satisfies (iii) with $r_v=1$.
Clearly, $U_v$ satisfies~(ii).

\smallskip

Now consider $v\in\Sigma_{K,f}$ such that $\fp_v\notin|\fV|$ (this construction applies for all $v\in\Sigma_{K,f}$, but we use the previous construction for $v$ such that $\fp_v\in|\fV|$).
Since $x$ is finite and étale on a Zariski open neighbourhood of~$s_0$ in~$C$, there is a rigid open neighbourhood $V_v$ of~$s^\van_0$ in~$C^\van$ such that $x^\van|_{V_v}$ is finite étale.
After shrinking $V_v$, we may assume that the image of $V_v$ is contained in a disc around~$0$.
By \cref{rigid-etale-local-isomorphism}, there exists an open subspace $U_v \subset V_v$ containing~$s_0^\van$ such that $x^\van|_{U_v}$ is an isomorphism of rigid spaces from~$U_v$ to a closed rigid disc $\bar D(0,r_v,K_v)$.
After shrinking~$U_v$ and~$r_v$, we may arrange that $U_v$ is simple and that $x^\van|_{U_v}$ is instead an isomorphism onto the open rigid disc $D(0,r_v,K_v)$.
Thus, $U_v$ satisfies (ii) and~(iii). 

\smallskip

Finally we verify (iv) for any $U_v$ satisfying (ii) and~(iii).
Since the analytification functor induces a morphism of locally G-ringed spaces $(C^\van, \cO_{C^\van}) \to (C_{K_v}, \cO_{C_{K_v}})$, the (algebraic) Taylor series of~$f$ with respect to~$x$ is the same as the (rigid analytic) Taylor series of $f^\van$ with respect to $x^\van$.
Since the Taylor series of a rigid function~$g$ on a disc converges to~$g$ inside its radius of convergence, (iv) holds.
\end{proof}

Now we prove a rigid analogue of \cite[Lemma~5.5]{Y(1)}.

\begin{proposition} \label{rigid-discs}
Suppose that $x$ has $\ell$ distinct unramified zeros $s_1, \dotsc, s_\ell \in C(K)$.

For each $v\in\Sigma_{K,f}$, there exists a real number $r_v > 0$ and simple open subspaces $U_{v,1}, \dotsc, U_{v,\ell} \subset C^\van$ with the following properties:
\begin{enumerate}[(i)]
\item $r_v \geq 1$ for almost all $v\in\Sigma_{K,f}$;
\item $s_k^\van \in U_{v,k}$;
\item for each $v\in\Sigma_{K,f}$, the spaces $U_{v,1}, \dotsc, U_{v,\ell}$ are pairwise disjoint, and form an admissible covering of the preimage under $x^\van$ of the rigid open disc $D(0,r_v,K_v)$;
\item for each $v\in\Sigma_{K,f}$ and~$k=1,\ldots,\ell$, the morphism $x^\van$ restricts to an isomorphism of rigid spaces from $U_{v,k}$ to the rigid open disc $D(0,r_v,K_v)$;
\item for every rational function $f \in K(C)$ which is regular at~$s_k$, if $\hat f \in \powerseries{K}{X}$ denotes the Taylor series of~$f$ around~$s_k$ in terms of the local parameter~$x$, then, for all $s \in U_{v,k}$ satisfying $\abs{x^\van(s)} < R(\hat f^\van)$, we have $\hat f^\van(x^\van(s)) = f^\van(s)$.
\end{enumerate}
\end{proposition}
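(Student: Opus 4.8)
The plan is to run the proof of \cref{rigid-coordinate-disc} with all $\ell$ zeros of~$x$ at once, obtaining the bound $r_v \geq 1$ for almost all~$v$ from a single integral model and reserving the shrinking of $r_v$ for finitely many places. First I would replace $C$ by a smooth projective curve, so that $x \colon C \to \bP^1_K$ becomes finite of degree~$\ell$. Since $x$ has $\ell$ distinct zeros $s_1,\dots,s_\ell \in C(K)$ and $\deg x = \ell$, the scheme-theoretic fibre $x^{-1}(0)$ equals $\{s_1,\dots,s_\ell\}$ with every point reduced; in particular $x$ is \'etale over~$0$, and none of the $s_k$ lies in the complement of the original affine curve. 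Using \cite[Lemma~6.1]{LGO} I would pick a regular $\cO_K$-model $\fC$ of~$C$ to which $x$ extends as a morphism $\xi \colon \fC \to \bP^1_{\cO_K}$, and \cite[Lemma~6.3]{LGO} to extend each $s_k$ to a section $\fs_k \colon \Spec(\cO_K)\to\fC$. Because the $s_k$ are distinct and $x$ is \'etale over~$0$, there is a dense open $\fV \subset \Spec(\cO_K)$ over which the images of $\fs_1,\dots,\fs_\ell$ are pairwise disjoint, $\xi$ is \'etale along each $\fs_k$, and $\xi^{-1}(\fz_0)|_\fV = \coprod_k \fs_k(\fV)$ as schemes, where $\fz_0$ is the zero section of $\bP^1_{\cO_K}$; this is simply the spreading-out of the statements $x^{-1}(0)=\{s_1,\dots,s_\ell\}$ and ``$x$ is unramified over $0$''.

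For $v\in\Sigma_{K,f}$ with $\fp_v\in|\fV|$ I would set $r_v=1$ and let $U_{v,k}$ be the rigid generic fibre of the formal completion of $\fC$ along $\fs_k(\fp_v)$, exactly as in \cref{rigid-coordinate-disc}. Since $\xi$ restricts to an isomorphism between the formal completions along $\fs_k(\fp_v)$ and along $\fz_0(\fp_v)$, each $x^\van|_{U_{v,k}}$ is an isomorphism onto $D(0,1,K_v)$; and since $\xi^{-1}(\fz_0(\fp_v))=\coprod_k \fs_k(\fp_v)$, the preimage $(x^\van)^{-1}(D(0,1,K_v))$ is the rigid generic fibre of the formal completion of $\fC$ along $\xi^{-1}(\fz_0(\fp_v))$, hence equals the disjoint union $\coprod_k U_{v,k}$. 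This yields (i)--(iv) at these places, the admissibility of the covering in (iii) being that of a finite disjoint union of admissible opens exhausting the space.

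For the remaining finitely many~$v$ I would allow $r_v<1$. The branch locus of $x$ on $\bP^1$ is finite and avoids~$0$, so for $r_v$ small enough $D(0,r_v,K_v)$ avoids it, and then $W_v:=(x^\van)^{-1}(D(0,r_v,K_v)) \to D(0,r_v,K_v)$ is finite \'etale of degree~$\ell$. The open disc is connected, and each $s_k^\van$ is a $K$-point lying in its own sheet (the $s_k$ being the $\ell$ distinct points of $x^{-1}(0)$). Applying \cite[Thm.~2.1]{Lut93} after extension of scalars, together with the descent of connectedness for spaces possessing a $K$-point \cite[Thm.~3.2.1]{Con99} as in the proof of \cref{rigid-etale-local-isomorphism}, I would obtain that $W_v$ is the disjoint union of open subspaces $U_{v,1},\dots,U_{v,\ell}$ with $s_k^\van\in U_{v,k}$ and $x^\van|_{U_{v,k}}$ an isomorphism onto $D(0,r_v,K_v)$. (Alternatively, one may apply \cref{rigid-coordinate-disc} to each $s_k$ separately, intersect the resulting discs with $W_v$, shrink $r_v$ using the separatedness of $C^\van$ to make them pairwise disjoint, and then count degrees over the connected base to see that they exhaust $W_v$.)

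Finally, property (iv) holds at every place by the argument already used for \cref{rigid-coordinate-disc}(iv): each constructed $U_{v,k}$ is a coordinate neighbourhood of~$s_k$ (since $x^\van|_{U_{v,k}}$ is an isomorphism onto a disc sending $s_k^\van$ to~$0$), the algebraic and rigid-analytic Taylor series of~$f$ with respect to~$x$ agree under analytification, and a rigid Taylor series converges to the function inside its radius of convergence. The main obstacle, as in \cref{rigid-coordinate-disc}, is the integral-model bookkeeping needed to get $r_v\geq1$ for almost all~$v$: one must arrange simultaneously that the $\ell$ sections remain distinct and unramified in the special fibre and that no extra point of $x^{-1}(0)$ materialises there. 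Once this is secured, the disjoint-disc structure over a disc follows formally at the good places and from Lütkebohmert's splitting theorem (or a degree count) at the finitely many bad places.
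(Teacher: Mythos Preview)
Your proposal is correct and close in spirit to the paper's proof, but the two arguments are organised differently in ways worth noting. The paper first applies \cref{rigid-coordinate-disc} separately to each~$s_k$, obtaining neighbourhoods satisfying (ii), (iv), (v), and then does two pieces of extra work: for $\fp_v\in|\fV|$ it identifies $U_{v,k}=\red^{-1}(\fs_k(\fp_v))$ and checks disjointness and admissibility via a Zariski covering of the special fibre; for $\fp_v\notin|\fV|$ it shrinks to closed (hence quasi-compact) subdiscs and uses the minimum of $|x^\van|$ on the pairwise intersections to force disjointness; finally, a degree count shows the union is the full preimage of $D(0,r_v,K_v)$. Your approach is more ``global'': you strengthen the condition on~$\fV$ to include $\xi^{-1}(\fz_0)|_\fV=\coprod_k\fs_k(\fV)$, so that at good places the preimage decomposition $(x^\van)^{-1}(D(0,1,K_v))=\coprod_k U_{v,k}$ drops out of functoriality of reduction, with no separate degree count needed; and at bad places you treat the whole finite \'etale cover $W_v\to D(0,r_v,K_v)$ at once via L\"utkebohmert plus the geometric-connectedness descent, rather than shrinking component by component. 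Your route is a bit more conceptual and avoids the quasi-compactness bookkeeping; the paper's route is more modular, reusing \cref{rigid-coordinate-disc} as a black box. Two small points: L\"utkebohmert's theorem is stated for closed discs, so you should split over $\bar D(0,r_v',K_v)$ first and then restrict to the open disc, as in the proof of \cref{rigid-etale-local-isomorphism}; and your final paragraph refers to ``property (iv)'' where you mean~(v).
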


\begin{proof}
Apply \cref{rigid-coordinate-disc} to each of~$s_1, \dotsc, s_\ell$, thereby yielding $r_{v,1}, \dotsc, r_{v,\ell}$ and $U_{v,1}, \dotsc, U_{v,\ell}$ for all~$v\in\Sigma_{K,f}$.
The latter automatically satisfy (ii), (iv) and~(v).
However, they need not satisfy (iii).

\smallskip

By \cite[Lem.~6.2]{LGO}, there exists a regular projective $\cO_K$-model $\fC'$ and a morphism $f:\fC'\to\fC$ such that $x$ extends to a morphism $\xi:\fC'\to\bP^1_{\cO_K}$. 
Let $\fs_1, \dotsc, \fs_k \colon \Spec(\cO_K) \to \fC'$ denote the $\cO_K$-extensions of $s_1,\ldots,s_\ell$.
As in the proof of Proposition \ref{rigid-coordinate-disc}, there is a dense open subset $\fV \subset \Spec(\cO_K)$ such that $\xi$ is \'etale on $\fs_k(\fV)$ for all $k=1, \dotsc, \ell$.
Since $s_1, \dotsc, s_k$ are pairwise distinct, after shrinking $\fV$, we may also assume that $\fs_j(\fp) \neq \fs_k(\fp)$ for all $\fp \in |\fV|$ and all $j \neq k$.

\smallskip

Now, suppose first that $\fp_v \in |\fV|$. 
As in the proof of Proposition \ref{rigid-coordinate-disc}, we can and do suppose that 
\[ U_{v,k} = (\hat{\fC'})_{\fs_k(\fp)}^\rig = (\red_v')^{-1}(\fs_k(\fp)) \]
and $r_v:=r_{v,k}=1$ for all $k=1,\ldots,\ell$.
Since $\fs_1(\fp), \dotsc, \fs_\ell(\fp)$ are distinct (by our choice of $\fp \in |\fV|$), we deduce that $U_{v,1}, \dotsc, U_{v,\ell}$ are pairwise disjoint.

Now consider the sets
\[ Z_{\fp,k} := \fC'_\fp \setminus \{ \fs_j(\fp) : 1 \leq j \leq \ell, \, j \neq k \} \]
for $1 \leq k \leq \ell$, which form a Zariski open covering of~$\fC'_\fp$.
Then the rigid spaces $(\red_v')^{-1}(Z_{\fp,1}), \dotsc, (\red_v')^{-1}(Z_{\fp,\ell})$ form an admissible covering of $C^\van$.
Since $U_{v,k} \subset (\red_v')^{-1}(Z_{\fp,k})$ and $U_{v,k}$ is disjoint from $(\red_v')^{-1}(Z_{\fp,j})$ for $j \neq k$, it follows that $U_{v,1}, \dotsc, U_{v,\ell}$ form an admissible covering of their union.

Next, suppose that $\fp_v \not\in |\fV|$. If necessary, we will shrink $U_{v,1}, \dotsc, U_{v,\ell}$ to make them disjoint and an admissible covering.
To that end, choose $r'_v < \min\{ r_{v,1}, \dotsc, r_{v,\ell} \}$ and let $U_{v,k}'$ denote the preimage of the closed disc $\bar D(0,r_v',K_v)$ inside $U_{v,k}$.
Then the rigid spaces $U_{v,k}'$ are quasi-compact, so $U_{v,j}' \cap U_{v,k}'$ are quasi-compact for each $j,k$.
Hence, if $U_{v,j}' \cap U_{v,k}' \neq \emptyset$, the absolute value of the function $x^\van$ has a minimum value~$R_{v,jk}$ on $U_{v,j}' \cap U_{v,k}'$.
If $j \neq k$, then $s_j^\van, s_k^\van \not\in U_{v,j}' \cap U_{v,k}'$, so $R_{v,jk} > 0$.
Let
\[ r_v = \min \{ r_v', \, R_{v,jk} : j \neq k, U_{v,j}' \cap U_{v,k}' \neq \emptyset \} > 0. \]
By construction, after replacing $U_{v,k}$ by the preimage of $D(0,r_v,K_v)$, the sets $U_{v,1}, \dotsc, U_{v,\ell}$ are pairwise disjoint.

Since the sets $U_{v,k}'$ are affinoid and there are finitely many of them, they form an admissible covering of their union.
After the final replacement of $U_{v,k}$, we have $U_{v,k} \subset U_{v,k}'$ and $U_{v,k}$ is disjoint from $U_{v,j}$ for $j \neq k$.
Hence $U_{v,1}, \dotsc, U_{v,\ell}$ form an admissible covering of their union.

\smallskip

To conclude, we note that every point of $(\bA^1)^\van$ has at most $\ell$ preimages under $x^\van$.  On the other hand, since $U_{v,1}, \dotsc, U_{v,\ell}$ are pairwise disjoint, every point of $D(0,r_v,K_v)$ has $\ell$ distinct preimages in $U_{v,1} \cup \dotsb \cup U_{v,\ell}$.
Hence the union $U_{v,1} \cup \dotsb \cup U_{v,\ell}$ is equal to $(x^\van)^{-1}(D(0,r_v,K_v))$.

\smallskip

Since the open subpaces $U_{v,k}$ constructed in this proof are always contained in the corresponding open subspaces~$U_{v,k}$ obtained from \cref{rigid-coordinate-disc}, they are simple.
\end{proof}

\bibliographystyle{amsalpha}
\bibliography{ZP}

\end{document}